\def\?[#1]{\textbf{[#1]}\marginpar{\Large{\textbf{??}}}}
\def\smallsection#1{\smallskip\noindent\textbf{#1}.}
\newtheorem*{theorem*}{Theorem}
\newtheorem{theorem}{Theorem}
\newtheorem{prop}{Proposition}[section]
\theoremstyle{definition}
\newtheorem{defi}{Definition}[section]
\newtheorem{conj}{Conjecture}
\newtheorem{lemma}[prop]{Lemma}
\newaliascnt{corr}{prop}
\newtheorem{corr}[corr]{Corollary}
\newtheorem{rem}{Remark}[section]
\newtheorem{ex}{Example}[section]
\numberwithin{equation}{section}
\newcommand{\abs}[1]{\left | #1 \right| }
\DeclareMathOperator{\tr}{Tr}
\newcommand*{\dd}{\mathop{}\!\mathrm{d}}
\DeclareMathOperator{\Tr}{Tr}
\DeclareMathOperator{\Spec}{Spec}
\DeclareMathOperator{\supp}{supp}
\DeclareMathOperator{\dist}{dist}
\DeclareMathOperator{\Span}{Span}
\DeclareMathOperator{\Res}{Res}
\newcommand{\ip}[2]{\left  \langle#1,#2 \right \rangle}
\newcommand{\mat}[1]{\begin{pmatrix} #1 \end{pmatrix}}
\newcommand{\set}[1]{ \left \{ #1 \right  \}}
\newcommand{\N}{\mathbb{N}}
\newcommand{\p}{\partial}
\newcommand{\dbar}{\overline{\p}}
\newcommand{\Z}{\mathbb{Z}}
\newcommand{\e}{\varepsilon}
\renewcommand{\epsilon}{\varepsilon}
\newcommand{\R}{\mathbb{R}}
\renewcommand{\SS}{\mathbb{S}}
\newcommand{\norm}[1]{ \left \| #1 \right  \|  }
\newcommand{\C}{\mathbb{C}}
\renewcommand{\phi}{\varphi}
\newcommand{\1}{\mathds{1}}
\renewcommand{\Re}[1]{{\rm{Re}} \left ( #1\right ) }
\renewcommand{\Im}[1]{{\rm{Im}} \left ( #1 \right ) }
\newcounter{step}
\newcommand{\Step}{
    \refstepcounter{step} 
    \noindent \textbf{Step \thestep.} 
}
\title[Poisson formula for SdS]{A Poisson formula for the wave propagator on  Schwarzschild-de Sitter backgrounds}
\author{Izak Oltman} 
\address[Izak Oltman]{Department of Mathematics, Northwestern University, 2033 Sheridan Rd, Evanston, IL 60208}
\email{ioltman@northwestern.edu}
\author{Ben Pineau*}
\address[Ben Pineau*]{Courant Institute for Mathematical Sciences\\
New York University
} \email{brp305@nyu.edu}
\begin{document}

\begin{abstract}
This paper proposes a Poisson formula for the wave propagator of the Schwarzschild--de Sitter (SdS) metric. 
That is done by proving a Poisson formula relating wave propagators and scattering resonances for a class of non-compactly supported potentials on the real line. 
That class includes the Regge--Wheeler potentials obtained from separation of variables for SdS. 
The novelty lies in allowing non-compact support -- all exact Poisson formulae of Lax--Phillips, Melrose, and other authors required compactness of the support of the perturbation. 

A key feature of the analysis is the presence of an exceptional class of potentials for which outgoing solutions may vanish at certain non-resonant frequencies. We identify and describe this class, which we call the resonant condition.
\end{abstract}

\maketitle

\section{Introduction}
{Quasinormal modes of black holes are supposed to measure the ringdown of gravitational waves -- see \cite{berti2009quasinormal} for a survey, \cite{cardoso2018quasinormal,jaramillo2021pseudospectrum,yang2025black} for more recent work and references. 
Their study has been of interest to mathematicians since the work of Bachelot--Motet-Bachelot \cite{BMB} -- see Hintz--Xie \cite{hintz2021quasinormal},
Hitrik--Zworski \cite{hitrik2024overdamped}, and J\'ez\'equel \cite{jezequel2022upper} for some recent advances. }

{The quasinormal modes\footnote{We follow the informal convention, common in math and physics, of referring to both the frequencies and the corresponding states as modes; the former should be more correctly called quasinormal frequencies.} can be considered as scattering resonances for linear wave equations arising in general relativity -- see \cite[Chapter 5]{dyatlov2019mathematical} and references given there. One of the features of the theory of scattering resonances, going back to the work of Lax--Phillips, is Poisson formulae. They generalized the Poisson formula for the wave equation on compact Riemannian manifolds $ ( M, g ) $, 
\begin{equation}
\label{eq:Poisson_comp}
\tr \cos t \sqrt { - \Delta_g } = \tfrac12 \sum_{ \lambda_j^2 \in \Spec ( - \Delta_g ) }
e^{ i \lambda_j t } .
\end{equation}
Here $ - \Delta_g $ is the Laplace--Beltrami operator for the metric $ g $ and 
the traces are meant in the sense of distributions in $ t $. The formula \eqref{eq:Poisson_comp} is, of course, an immediate consequence of the spectral decomposition of $ - \Delta_g $.
In the non-compact setting, the left-hand side
is not distributionally of trace class and it has to be renormalized. 
The right-hand side should then be replaced by a sum over scattering resonances with terms $ e^{ - i \lambda_j |t| } $ (since $ \Im {\lambda_j } < 0$) and the formula is typically not valid at $ t = 0 $ -- see \cite[\S\S 2.6, 3.10, 7.4]{dyatlov2019mathematical} for an introduction to this subject and references. }

{To investigate a possible Poisson formula for quasinormal modes of black holes, that is, a relation between (renormalized) traces of wave groups and sums over the modes, we consider the simplest case of Schwarzschild--de Sitter black holes. 
There, a standard Regge--Wheeler reduction gives quasinormal modes as the union of scattering resonances of a family of potentials parametrized by angular momenta. 
}

Motivated by this, we compute here the trace of the propagator for the wave equation $\p_t^2 - \p_x^2 + V(x)$ for $ V $ which include the Regge--Wheeler potentials and their perturbations -- see \S \ref{s:RW}. 

\subsection{Trace formula for a class of non-compactly supported potentials.}
\label{s:trace}
We consider potentials satisfying the following hypothesis.

There exist  constants $ A_\pm > 0 $ and $ R>0$ such that
\begin{equation}
    \label{hyp:1}
V ( x ) |_{ \pm  x > R } = F_\pm (\exp (  \mp  x A_\pm ) ),  
\end{equation}
where $  z \mapsto F_\pm ( z ) $ are holomorphic in a neighborhood of $ z = 0$, $F_\pm (0)= 0$,  {and $F_\pm '(0) \neq 0$}.

In particular, this means there exist $\set{V^\pm_j \in \R : j\in \Z_{\ge 1}}$ and constants $A,C>0$ so that
\begin{equation}
    V(x)|_{ \pm x > C } = \sum_{j=1}^\infty \left(e^{\mp xA_\pm} \right)^jV^\pm_j  \label{eq:169} 
    \end{equation}
with $\abs{V^\pm _j } \le A^j$  {and $V_1^\pm \neq 0$}.

 {
We must additionally define a subset of these potentials which we say satisfy the \textit{Resonant Condition}, denoted by $\mathcal{RC}$.
This class of potentials should be thought of as a ``measure-zero'' edge case.
Further discussion will be postponed until \Cref{ss:res_cond}.

\begin{defi}[Minimal resonant condition definition]\label{def:min_res_cond}
For $j\in \Z_{\ge 2}$, we say $V\in \mathcal{RC}_j^\pm$ if $V_j^\pm$ (the coefficient in \eqref{eq:169}) is equal to a specific polynomial depending on $V_1^\pm,\dots V_{j-1}^\pm$, and $A_\pm$ (see \eqref{eq:421}).
We denote $\mathcal{RC} = \bigcup_\pm \bigcup _j \mathcal{RC}_j^\pm$
\end{defi}
}

The first result, which is implicit in \cite[\S 3]{dyatlov2011quasi} and earlier works, is the meromorphy of the resolvent of the corresponding Schr\"odinger operator (Green function):
\begin{prop}
\label{p:0}
The resolvent of $ P_V \coloneq D_x^2 + V ( x ) $, $D_x \coloneq \p_x /i ,$
\[   R_V ( \lambda ) \coloneq  ( P_V - \lambda^2 )^{-1} \colon L^2 \to L^2, \ \ \  \Im 
\lambda  \gg 1 , \]
continues meromorphically to 
\[ R_V ( \lambda ) \colon L^2_{\rm{c}}  ( \mathbb R) \to L^2_{\rm{loc} } ( \mathbb R), \ \ \ \lambda \in \mathbb C.\]
\end{prop}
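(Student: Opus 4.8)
The plan is to run the classical one-dimensional Jost-function construction of the meromorphically continued resolvent. The only place where hypothesis \eqref{hyp:1} is used in an essential way is to push the construction across all of $\C$, rather than merely across a strip $\{\Im\lambda>-c\}$ as one obtains for potentials with only exponential decay.

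\textbf{Jost solutions.} First I would construct, for $\Im\lambda$ large, the solutions $e_\pm(\cdot,\lambda)$ of $(P_V-\lambda^2)e_\pm=0$ normalized by $e_\pm(x,\lambda)=e^{\pm i\lambda x}(1+o(1))$ as $x\to\pm\infty$. On $\{x>R\}$ hypothesis \eqref{hyp:1} lets one pass to the variable $z=e^{-xA_+}$, in which the equation has a regular singular point at $z=0$ with indicial roots $\pm i\lambda/A_+$; inserting $e_+=e^{i\lambda x}\sum_{m\ge 0}a_m(\lambda)z^m$ gives the recursion $A_+^2\,m\,(m-2i\lambda/A_+)\,a_m=\sum_{j=1}^{m}V^+_j\,a_{m-j}$ with $a_0=1$, and the bound $|V^+_j|\le A^j$ yields, by a routine majorant argument, $|a_m(\lambda)|\le C(\lambda)^m$ locally uniformly on $\C$ away from the \emph{indicial resonances} $\lambda\in-i\tfrac{A_+}{2}\Z_{\ge 1}$. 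Hence $\sum a_m z^m$ converges for $x$ large, $e_+$ solves the equation there, and $\lambda\mapsto e_+(x,\lambda)$ is holomorphic on $\C$ up to the indicial resonances, at which the standard remedy applies: either the would-be-infinite coefficient is absorbed into a logarithmic term by adjusting the normalization, keeping $e_+$ holomorphic, or it is left in place, so that $e_+$ has at worst a simple pole there, to be cancelled below. One then extends $e_+$ to all of $\R$ by solving the linear ODE on $(-\infty,R]$ with Cauchy data at $x=R$, which preserves smoothness in $x$ and holomorphy in $\lambda$; the solution $e_-$ is obtained symmetrically near $-\infty$.

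\textbf{Wronskian and continuation.} For $\Im\lambda\gg 1$ one has $e_+\in L^2$ near $+\infty$ and $e_-\in L^2$ near $-\infty$, and the Wronskian $W(\lambda)=e_-e_+'-e_-'e_+$ (independent of $x$, holomorphic in $\lambda$) is nonzero: $W(\lambda)=0$ would make $e_+$ and $e_-$ proportional and hence produce an $L^2$ eigenfunction of the self-adjoint operator $P_V$ at the energy $\lambda^2$, which is impossible since $\lambda^2\notin\Spec P_V$ for $\Im\lambda\gg 1$. The kernel formula
\[
R_V(\lambda)f(x)=\frac{1}{W(\lambda)}\Bigl(e_+(x,\lambda)\int_{-\infty}^{x}e_-(y,\lambda)f(y)\,dy+e_-(x,\lambda)\int_{x}^{+\infty}e_+(y,\lambda)f(y)\,dy\Bigr)
\]
then represents $(P_V-\lambda^2)^{-1}$ on $L^2$. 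The key observation is that the right-hand side continues to make sense for every $\lambda\in\C$ with $W(\lambda)\ne 0$: if $f$ is supported in $[-N,N]$ the two integrals run over compact sets and the $e_\pm$ are smooth, so the formula defines a bounded operator $L^2_{\mathrm c}\to L^2_{\mathrm{loc}}$ depending holomorphically on $\lambda$. Since it agrees with $R_V(\lambda)$ for $\Im\lambda\gg 1$, since $W\not\equiv 0$ so that its zero set is discrete, and since the possible poles of the $e_\pm$ at the indicial resonances cancel against matching poles of $W$ in the formula, this is the asserted meromorphic continuation, with poles contained in $\{W=0\}$.

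\textbf{Main difficulty.} The substance is in the first step: producing Jost solutions that are holomorphic on \emph{all} of $\C$. With only exponential decay one obtains holomorphy on a strip whose width is governed by the decay rate, and this is precisely what blocks the classical Poisson formulae; it is the holomorphy of $F_\pm$ near $0$ that makes the regular-singular-point expansion converge to all orders, and the one delicate point is the behaviour at the discrete set of indicial resonances, where the Frobenius recursion degenerates and logarithmic solutions enter. The remaining ingredients --- the majorant bound for the $a_m$, the Wronskian identity, the $L^2_{\mathrm c}\to L^2_{\mathrm{loc}}$ mapping property, and uniqueness of analytic continuation --- are routine, and this is in essence what the text means by ``implicit in \cite[\S 3]{dyatlov2011quasi} and earlier works.'' Alternatively, one could reach the same conclusion by complex scaling near $\pm\infty$, converting the continuation into a discrete spectral problem for a non-self-adjoint operator.
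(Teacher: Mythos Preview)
Your proposal is correct and follows essentially the same route as the paper: construct outgoing (Jost) solutions via a Frobenius expansion in the variable $w=e^{\mp A_\pm x}$, verify convergence from the bound $|V_j^\pm|\le A^j$, extend globally by solving the linear ODE, check that the Wronskian is not identically zero, and write the resolvent kernel by variation of parameters.

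The one place where the paper is sharper is exactly the ``delicate point'' you flag --- the indicial resonances $\lambda\in -i\tfrac{A_\pm}{2}\Z_{\ge1}$. Rather than either of your suggested remedies (insert a logarithmic term, or leave simple poles in $e_\pm$ to be cancelled against poles of $W$ in the kernel), the paper simply normalizes $a_0(\lambda)=1/\Gamma(1-2i\lambda/A_\pm)$ instead of $a_0=1$. The zeros of $1/\Gamma$ at the positive integers cancel precisely the poles that the recursion would produce, so every $a_m$ is \emph{entire} in $\lambda$ from the start and no separate case analysis at the indicial set is needed. Combined with a maximum-principle argument, this also yields a uniform bound $|a_m(\lambda)|\le C^m e^{C\langle\lambda\rangle\log\langle\lambda\rangle}$ over all of $\C$ --- not needed for Proposition~\ref{p:0} itself, but essential later when the growth order of the Wronskian enters the Hadamard factorization and the counting estimate on resonances. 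Your pole-cancellation argument is fine for the bare meromorphic continuation, but recovering that uniform growth bound from it would take additional work.
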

In other words, for any $ f, g \in L^2_{\rm{c}} $, 
$ \lambda \mapsto \langle R_V ( \lambda ) f , g \rangle $ is a 
meromorphic function with poles independent of $ f $ and $ g $. 
The poles are called {\em scattering resonances} and we denote their set (with elements included according to their multiplicity -- see \Cref{def:multi}) $ {\rm{Res}} (V) $.
For recent interesting developments in 1D scattering, see the work of Korotyaev \cite{Ko04,Ko05,Ko14}.

Define $\Box_V \coloneq \p_t^2 + P_V $ and let $U_V(t)\coloneq \cos (t\sqrt {P_V})$ be the cosine wave propagator with respect to $\Box_V$.
That is, if $u\in C_0^\infty (\R)$:
\begin{align}\label{eq:205}
    \begin{cases}
        \Box_V (U(t)u(x))(t,x) = 0, \\
        U(0)u(x) = u(x),\\
        (\p_t (U(t) u(x) )\big|_{t=0} = 0.
    \end{cases}
\end{align}
Our main result relates the trace of the wave propagator minus the free wave propagator to a sum over resonances, analogous to \eqref{eq:Poisson_comp}. 

\begin{theorem*}[Main result]
Suppose $V\in C^\infty (\R ;\R)$ satisfies \eqref{hyp:1}, $V \notin \mathcal{RC}$ (see \Cref{def:min_res_cond}) and $U_V(t)$ is the wave propagator for $\Box_V$.
Then for $t >0$ and in the sense of distributions,
\begin{align}
\Tr(U_V(t) - U_0(t))  = \tfrac{1}{2}\sum_{\lambda_j \in {\Res(V)}} e^{-i\lambda_j t} + \mathcal{A}_+ ( t ) + \mathcal{A}_ -( t )  - \tfrac 1 2, \label{eq:218}
\end{align}
where 
\begin{align}
 \mathcal A_\pm ( t )  = - \tfrac12 ( e^{t A_\pm/2 } - 1 )^{-1} \label{eq:209}
 \end{align}
\end{theorem*}

In \eqref{eq:218}, and throughout the paper, sums over resonances are taken with multiplicity (see \Cref{def:multi}).
A more general theorem for $V\in \mathcal{RC}$ is presented in \Cref{ss:res_cond}.

\begin{ex}[P\"oschl-Teller potential]
The scattering matrix, $S(\lambda)$, for the P\"oschl-Teller potential
\begin{align}
    V_{\rm{PT}}(x) \coloneq \frac{\ell^2 + 1/4}{\cosh^2 (x)}\label{eq:PTpotential}
\end{align}
can be explicitly computed \cite{cevik2016resonances} and the resonances are given by $$\set{\lambda_j^\pm = -i(j+1/2)\pm \ell : j\in \Z_{\ge 0}}.$$
Moreover, for $\ell \in \R$, $V_{\rm{PT}}(x)\notin\mathcal{RC}$ (see \Cref{prop:rc3}).
The Birman--Kre\u{\i}n trace formula can be applied (exactly as will be done in \S \ref{s:birman}, but with the poles of $\frac{d}{d\lambda} \log \det S(\lambda)$ explicit).
It can then be shown that
\begin{align}
    \Tr (U_{V_{\rm{PT}}}(t) - U_0(t)) &= \tfrac 1 2 \sum_{\lambda_j \in \Res( V_{\rm{PT}}) } e^{-i\lambda_j   t }  \underbrace{- \frac{1}{e^{t}-1}}_{\mathcal{A_+}+\mathcal A_-} - \tfrac 1 2 \\
    &=  \frac{\cos(\ell t)}{2\sinh(t/2)}- \frac{e^{-t/2}}{2\sinh(t/2)}- \tfrac 1 2\\
    &=\tfrac{1}{2} \left(\frac{\cos(\ell t)-e^{-t/2}}{\sinh(t/2)} - 1\right), \ \ \  t > 0 . \label{eq:243}
\end{align}
We can compare this to a numerically computed approximation of the flat trace of $U_{V_{\rm{PT}}}(t)$, which is presented in \Cref{fig:theonlyfigure}.
\begin{figure}
    \centering
    \includegraphics[scale=.35]{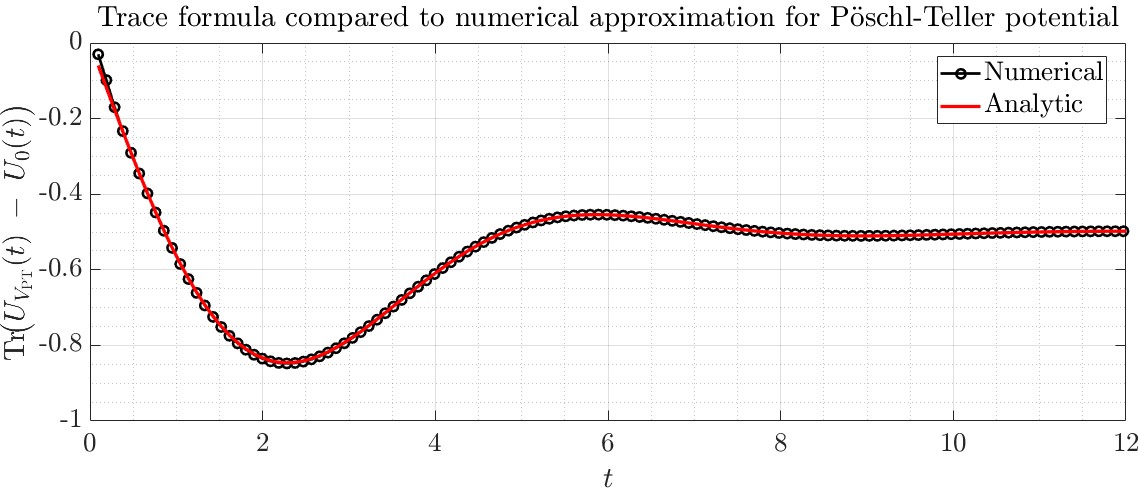}
    \caption{Comparison between a numerical approximation of the flat trace for the Pöschl–Teller potential (with $\ell = 1$) and the analytic expression in \eqref{eq:243}. 
    The numerical trace is computed in MATLAB by truncating and discretizing the spatial domain, evolving a normalized Gaussian centered at each grid point, $x_i$, under the cosine propagator $\cos\big(t\sqrt{P_{V_{\mathrm{PT}}}}\big)$, subtracting the free wave evolution of the same Gaussian, and evaluating at $x_i$.
    This approximates $U_{V_{\rm{PT}}}(t,x_i,x_i)-U_{0}(t,x_i,x_i)$ (the Schwartz kernels).
    Computing the sum of these terms, multiplied by $x_{i} - x_{i-1}$ will approximate the flat trace.}
    \label{fig:theonlyfigure}
\end{figure}{
Note that as $t\to 0^+$, the right-hand side of \eqref{eq:243} goes to zero.
This is a classical result for compactly supported potentials (see Smith \cite{smith2023trace} and references given for a recent account).

A similar explicit example (but for a potential satisfying the \textit{Resonant condition}) is presented in \Cref{prop:581}.
}
\end{ex}

{When $ V$ is compactly supported, Proposition \ref{p:0} is classical -- see \cite[\S 2.2]{dyatlov2019mathematical}, and we have the Poisson formula 
\begin{equation} 
\label{eq:Melr} \Tr(U_V(t) - U_0(t))  = \tfrac{1}{2} {\rm{p.v.} } \sum_{\lambda_j \in {\rm{Res}}(V)} e^{-i\lambda_j |t|} - |{\rm{chsupp}} V | \delta_0 ( t ) - \tfrac12 , \end{equation}
see \cite[Theorem 2.21]{dyatlov2019mathematical}.
Both sides are defined distributionally on $ \mathbb R  $ and $ \rm{p.v.} $ denotes the distributional principal value obtained by summing over $ |\lambda_j | \leq R $ and taking the $ R \to \infty $ limit. 
The notation $ \rm{ch} $ denotes
the convex hull. 
The presence of the $|{\rm{chsupp}} V | \delta_0 ( t ) $ term explains the need for the renormalization terms $ \mathcal{A}_\pm $ in the non-compact case. 
 For $ t > 0 $, \eqref{eq:Melr} was proved by Melrose \cite{melrose1982scattering} and for $ t > 2 {\rm{diam}}(\supp V ) $ by Lax--Phillips \cite{LaxPhillips1978}.}
The proof of such trace identities often relies on the Eisenbud–-Wigner delay operator, $\partial_\lambda S(\lambda) S(\lambda)^{-1}$. 
In the compactly supported setting, this operator is linked to the Breit-–Wigner series $\sum_{\lambda_j \in  \Res(V)\setminus{0}} \Im {\lambda_j} |\lambda_j|^{-2}$ via the Breit-–Wigner approximation (cf. \cite[Theorem 2.20]{dyatlov2019mathematical}). 
For non-compactly supported potentials, however, the convergence or divergence of this approximation remains an open question; see Backus \cite{backus2020breit} for a thorough discussion.

If we define $\iota \colon (t,x)\mapsto (t,x,x)$, $\pi\colon (t,x)\mapsto t$, and denote the kernel of $U_V(t)$ by $U_V (t,x,y)$, as a distribution on $(0,\infty)\times\R^2$, {with the pairing using the Lebesgue measure}, then the \textit{flat trace} is defined as
\begin{align}
    \tr^\flat U_V(t) \coloneq \pi _* \iota^*U_V \label{eq:flat_trace}
\end{align}
(which is well-defined thanks to the usual wave front set \cite[\S 29.1]{hormander4}).
Because $\tr^\flat U_0(t)=0$ and $\tr^\flat (U_V - U_0) = \tr(U_V-U_0)$, \eqref{eq:218} can be written
\begin{align}
    \Tr^\flat U_V (t)  = \tfrac{1}{2}\sum_{\lambda_j \in {\rm{Res}}(V)} e^{-i\lambda_j t} + \mathcal{A}_+ ( t ) + \mathcal{A}_ -( t ) -\tfrac{1}{2}.\label{eq:231}
\end{align}

\subsection{Regge--Wheeler potentials}

As a consequence of our main theorem, we propose a trace formula for the wave propagator in the Schwarzschild-de Sitter metric. 
It necessarily involves a renormalization of the trace -- not the flat trace -- which can be used to pass from a trace formula for each Regge--Wheeler potential (parameterized by angular momenta), to a trace formula for the full potential. 
Because $\mathcal{A}_{\pm}(t)$ are independent of angular momentum, simply summing over angular momentum is too naive.
On the other hand, thanks to recent work of 
J\'ez\'equel \cite{jezequel2022upper} the sum of $e^{-i\lambda_j t}$ over all quasinormal modes (that is, the sum over $ {\rm{Res}} ( V_\ell ) $) is a well-defined distribution.

\label{s:RW}
Schwarzschild-de Sitter spacetime can be constructed in the following way (c.f. \cite[\S 3.1]{jezequel2022upper}). 
Define $M \coloneqq  (r_- , r_+) _r \times \SS^2$
where $0 < r_- < r_+$ are the positive roots of
\begin{align}
    G(r) \coloneqq 1 - \frac{2m}{r} - \frac{\Lambda r^2 }{3}  .\label{eq:120}
\end{align}
Here $m >0$ is the \textit{black hole mass} and $\Lambda \in (0,(9m^2)^{-1})$ is the \textit{cosmological constant}. 

Define the Schwarzschild-de Sitter spacetime as $\hat M \coloneqq \R_t \times M$ and let $g$ be the Lorentzian metric:
\begin{align}
    g \coloneqq -G \dd t^2 + G^{-1} \dd r^2 + r^2  g_\SS 
\end{align}
with $g_\SS$ the standard metric on $\SS^2$.

With this Lorentzian metric, we have the wave operator $\Box_g $, which satisfies, for $u\in C_0^\infty((r_- , r_+) \times \SS^2)$
\begin{align}
    \Box_g u = - \frac{1}{G} D_t^2 u + r^{-2} D_r (r^2 G D_r u ) + r^{-2} \Delta _{\SS^2} u 
\end{align}
(cf. \cite[Chapter 4]{chandrasekhar1998mathematical}).
Let $U_g(t)$ be the Dirichlet wave propagator with respect to $\Box_g$ (i.e., $U_g(t)$ satisfies \eqref{eq:205} with $\R$ replaced by $M$ and $\Box_V$ replaced by $\Box_g$).

Following \cite[\S 4]{barreto1997distribution}, write:
\begin{align}
    \Box_g = -G^{-1} D_t^2 + G^{-1} P \label{eq:boxg}
\end{align}
where
\begin{align}
    P \coloneqq G r^{-2} D_r (r^2 G) D_r + G r^{-2} \Delta _{\SS ^2} .
\end{align} 
Let $\tilde P \coloneq r Pr^{-1}$ and define $x= x(r)$ such that $x'(r) = G^{-1}$ so that
\begin{align}
    \tilde P = D_x^2 + G r^{-2}\Delta _{\mathbb{S}^2} + G r^{-1} (\p_r G). \label{eq:161}
\end{align}
Decomposing a function $u(x,\omega)$ by spherical harmonics: $u(x,\omega) = \sum_{\ell,k} a_{\ell,k} (x) Y_\ell^k (\omega) $ (where $\Delta _\omega Y_\ell^k(\omega) = \ell (\ell +1 ) Y_\ell^k (\omega)$) we get that $u$ is a generalized eigenfunction of $\tilde P$ with eigenvalue $\lambda^2$ if and only if:
\begin{align}
    (D_x^2 +  \underbrace{G r^{-2}  ( \ell (\ell +1 ) +r  \p_r G) }_{\coloneqq V_\ell (x)}    ) a_{\ell,k} = \lambda^2 a_{\ell,k}. \label{eq:165}
\end{align}
Note for each $\ell$, $V_\ell(x(r))$ is a smooth function on the interval $r\in (r_-,r_+)$ vanishing at $r= r_\pm$.

Now, if $U_{\tilde V}(t)$ is the propagator for $-D_t^2 + \tilde P$ and $U_{V_\ell}(t)$ is the propagator for $-D_t^2 + D_x^2 +  V_\ell (x)$, then for $u_0\in C^\infty _0(\R_x\times \mathbb S ^2 _\omega) $
\begin{align}
    U_{\tilde V}(t) u_0 = \sum_{\ell=0}^\infty \sum_{k = -\ell}^\ell    Y_\ell ^k  U_{V_\ell}\ip{u_0}{Y^k_\ell}_{L^2 _\omega (\mathbb S ^2 )}. \label{eq:179}
\end{align}
Here $\ip{u_0}{Y^k_\ell}_{L^2 _\omega (\mathbb S ^2 )}$ is integration of $u_0$ against $Y^k_\ell$ in the $\omega$ variable, so the resulting object is a function of $x$ alone.

Using this, we can rewrite the propagator for $\Box_g$ (defined in \eqref{eq:boxg}) as
\begin{align}
[U_g  ( t )  F] ( r , \omega ) = r^{-1} [ U_{\tilde V } (t)  ( r F )]  ( r, \omega ) , \ \ 
F = F ( r,\omega ) . \label{eq:185}
\end{align}

It is straightforward to verify that $V_\ell (x)$ satisfies \eqref{hyp:1} with $A_\pm = \abs{\p_r G(r_\pm)}$.
Indeed, by fixing $r_0 \in (r_- ,  r_+)$ we can define
\begin{align}
    x(r) = \int_{r_0}^r (G(s))^{-1} \dd s
\end{align}
so that $x(r_\pm ) =  \pm \infty $. Near $r = r_+$, we have:
\begin{align}
    G(r) = (r_+ - r) f(r) 
\end{align}
where $f(r)$ is nonzero and holomorphic near $r = r_+$. 
Taylor expanding $f$ and integrating, we get for $r$ near $r_+$
\begin{align}
    x(r) = -A_+^{-1}\log(r_+ - r) +b(r)\label{eq:182}
\end{align}
for $b$ holomorphic near $r_+$. 
Therefore $w\coloneq \exp(-A_+ x)=(r-r_+)\exp(b(r))$.
By the inverse function theorem, we get (for $w$ near $0$) $V_\ell (x(w))=F_+ (w)$ with $F$ holomorphic near $w= 0$.
We similarly get, for the change of coordinates $w = \exp(A_- x)$, $V_\ell (x(w)) = F_-(w)$ is holomorphic near $w = 0$.

\begin{theorem*}[Trace formula for each spherical harmonic]
    For each $\ell \in \Z_{\ge 0}$, $t >0$
\begin{align}
            \Tr_{L^2 ( \mathbb R )} (U_{V_\ell} (t) - U_0(t)) & = \tfrac{1}{2}\sum_{\lambda_j \in \Res(V_\ell)} e^{-i\lambda_j t} + \mathcal A_{+,\ell} (t) + \mathcal{A_{-,\ell} }(t) - \tfrac{1}{2} \label{eq:369}
\end{align}
    in the sense of distributions, where $U_{V_\ell}$ is the propagator for $V_\ell$ (defined in \eqref{eq:165}), $\Res(V_\ell)$ is the set of resonances of $V_\ell$,
    \begin{align}
        \mathcal{A}_{\pm,\ell} (t) = -\tfrac 1 2  \sum_{n \in \Z_{\ge 1 } \setminus \Lambda_{\pm,\ell}} e^{-tA_\pm n / 2}
    \end{align}
    where $\Lambda_{\pm,\ell} \subset \Z_{\ge 2}$ are defined in \Cref{def:rc}, and $A_\pm = \abs{\p_r (G(r_\pm))}$. 
    \end{theorem*}

    \begin{rem}
        As we will see in \Cref{ex:RW in RC}, $\Lambda_{\pm,\ell}$ can be nonempty for certain $\ell, m$, and $\Lambda$.
        We however conjecture that for each $\Lambda$, outside of a measure zero set of $m$, $\Lambda_{\pm,\ell}  = \emptyset$.
    \end{rem}

\begin{rem}
The constants $A_\pm$ are related to the surface gravity of the event and cosmological horizon, $\kappa_-$ and $\kappa_+$, respectively,  by the simple relation $A_\pm = 2 \kappa_\pm$ (cf. \cite[\S 12.5]{wald2024general}).
When $\Lambda_\pm = \emptyset$, we can then write the terms in \eqref{eq:369} as
\begin{align}
\mathcal A_{\pm ,\ell }(t) =  -\tfrac 1 2 \sum_{m=0}^\infty e^{-i(-i m \kappa_\pm )t}.
\end{align}
This suggests that these terms can be included in the sum over resonances.
Interestingly, on the de Sitter space, there are resonances at $\set{-im\kappa_\pm : m \in \Z_{\ge 0}}$ (cf. \cite{hintz2021quasinormal}).
Making sense of this connection could lead to a formulation of the Poisson formula for other metrics.
\end{rem}

The trace on the left-hand side of \eqref{eq:369} is written in the $x$ variable.
Changing variables $x$ to $r$, the left-hand side of \eqref{eq:369} is
\begin{align}
    \Tr_{L^2 ((r_- ,r_+), G(r)^{-1} \dd r)} (U_{V_\ell }(t) - U_0(t)). \label{eq:392}
\end{align}
Recalling the notion of flat trace \eqref{eq:flat_trace}, \eqref{eq:392} can be written
\begin{align}
     \Tr^\flat_{L^2 ((r_- ,r_+), G(r)^{-1} \dd r)} (U_{V_\ell }(t) ). \label{eq:396}
\end{align}
Conjugating $U_{V_\ell}$ by $r$ allows us to write \eqref{eq:396} as
\begin{align}\label{eq:409}
    \Tr^\flat_{L^2 ((r_- ,r_+), r^2G(r)^{-1} \dd r)} (r^{-1}U_{V_\ell }(t) r) .
\end{align}
Let $\Pi_\ell$ be the orthogonal projection in the $\omega$ variable onto the space spanned by $\set{Y_\ell ^k (\omega) : k = -\ell,- \ell + 1,\dots ,\ell}$.
Then by \eqref{eq:179} and \eqref{eq:185}, we can rewrite \eqref{eq:409} as
\begin{align}
\Tr^\flat_{L^2 ((r_-,r_+)\times \mathbb S^2  ,r^2 G^{-1}\dd r \dd \omega)} ( \Pi _\ell  U_g (t) \Pi_\ell ).
\end{align}
So if we now define the \textit{gravitationally renormalized trace} on $$\text{SdS}\coloneq L^2((r_- ,r_+)\times \mathbb S ^2 , G(r)^{-1}r^2 \dd r \dd \omega) $$ as
\begin{align}
    \widetilde \Tr ^\flat_{\rm{SdS}} (A(t)) \coloneq \sum_{\ell =0}^\infty\left (  \Tr^\flat_{\rm{SdS}}\Pi _\ell A(t) \Pi_\ell -\mathcal{A}_{+,\ell} (t) - \mathcal{A}_{-,\ell} (t)   +\tfrac{1}{2}\right )
\end{align}
then \eqref{eq:369} gives rise to the following global trace formula.
\begin{theorem}[Global trace formula]
The gravitationally renormalized trace of the wave propagator on the Schwarzschild-de Sitter metric is
\begin{align}
    \widetilde \Tr ^\flat_{\rm{SdS}} (U_g(t)) = \tfrac 1 2 \sum_{\lambda \in \Res_g} e^{-it\lambda}, \quad t> 0 
\end{align}
where $\Res_g \coloneq \bigcup_{\ell} \Res(V_\ell)$ (and the term on the right-hand side is a well-defined distribution by \cite[Theorem 3]{jezequel2022upper}).
\end{theorem}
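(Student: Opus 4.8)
The plan is to obtain the global formula as a direct consequence of the per--spherical-harmonic trace formula \eqref{eq:369}, the reduction of the Schwarzschild--de Sitter flat trace already carried out in \eqref{eq:392}--\eqref{eq:409}, and the distributional convergence of the quasinormal-mode sum established in \cite[Theorem 3]{jezequel2022upper}.

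\emph{First,} for each fixed $\ell\in\Z_{\ge 0}$ and $t>0$ I would invoke the chain of identities \eqref{eq:392}--\eqref{eq:409}, which relies on the spherical-harmonic decomposition \eqref{eq:179}, the conjugation \eqref{eq:185}, and the vanishing $\tr^\flat U_0(t)=0$, to write
\[ \Tr^\flat_{\rm{SdS}}\big(\Pi_\ell U_g(t)\Pi_\ell\big)=\Tr_{L^2(\R)}\big(U_{V_\ell}(t)-U_0(t)\big) \]
as distributions, where $V_\ell$ is the Regge--Wheeler potential from \eqref{eq:165}. Here one also uses that $\Pi_\ell U_g(t)\Pi_\ell$ has a well-defined flat trace, by the wave-front-set considerations behind \eqref{eq:flat_trace}, and that $U_g(t)=\sum_\ell \Pi_\ell U_g(t)\Pi_\ell$ on $C_0^\infty((r_-,r_+)\times\SS^2)$ by \eqref{eq:179} and \eqref{eq:185}; so nothing new is needed on the geometric side.

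\emph{Next,} I would substitute the per--spherical-harmonic formula \eqref{eq:369} into the previous identity. Since $A_\pm=\abs{\p_r G(r_\pm)}$ is independent of $\ell$, for every $\ell\in\Z_{\ge 0}$ and $t>0$ one gets
\[ \Tr^\flat_{\rm{SdS}}\big(\Pi_\ell U_g(t)\Pi_\ell\big)+\frac{1}{2(e^{tA_-/2}-1)}+\frac{1}{2(e^{tA_+/2}-1)}+\tfrac12=\tfrac12\sum_{\lambda_j\in\Res(V_\ell)}e^{-i\lambda_j t}, \]
which is exactly the $\ell$-th summand in the definition of $\widetilde\Tr^\flat_{\rm{SdS}}$. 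Summing over $0\le\ell\le L$ and letting $L\to\infty$: the left-hand partial sums equal the right-hand partial sums $\tfrac12\sum_{\ell\le L}\sum_{\lambda_j\in\Res(V_\ell)}e^{-i\lambda_j t}$, which converge in $\mathcal{D}'((0,\infty))$ to $\tfrac12\sum_{\lambda\in\Res_g}e^{-it\lambda}$ by \cite[Theorem 3]{jezequel2022upper}, with $\Res_g=\bigcup_\ell\Res(V_\ell)$. Hence the left-hand partial sums converge to the same limit, and by definition this limit is $\widetilde\Tr^\flat_{\rm{SdS}}(U_g(t))$; this gives the claimed formula and at the same time shows the defining sum converges.

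\emph{The only step beyond bookkeeping} is verifying that the summation order built into the definition of $\widetilde\Tr^\flat_{\rm{SdS}}$---grouping resonances by angular momentum $\ell$ and summing the renormalized $\ell$-brackets in order of increasing $\ell$---matches the order for which \cite[Theorem 3]{jezequel2022upper} proves that $\sum_{\lambda\in\Res_g}e^{-it\lambda}$ is a well-defined distribution (a Weyl-type count of quasinormal modes organized by $\ell$). I expect this reconciliation of summation orders to be the main, and essentially only, subtle point; once it is settled, the rest is collecting the $\ell$-independent terms $\mathcal{A}_\pm(t)$ and $-\tfrac12$ and applying \eqref{eq:369}.
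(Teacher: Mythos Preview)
Your proposal is correct and follows exactly the paper's approach: the paper presents Theorem~1 as an immediate consequence of \eqref{eq:369} together with the chain \eqref{eq:392}--\eqref{eq:409} and the definition of $\widetilde\Tr^\flat_{\rm{SdS}}$, invoking \cite[Theorem 3]{jezequel2022upper} only to assert that the right-hand side is a well-defined distribution. If anything, your write-up is more detailed than the paper's, which essentially just says ``then \eqref{eq:369} gives rise to the following global trace formula'' and states the result.
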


 {
\subsection{Resonant Condition}\label{ss:res_cond}

Our analysis of the resonances involves the construction of incoming and outgoing solutions to $-\p_x^2 +V(x)$.
In certain pathological cases of potentials, one of these solutions can identically vanish for a frequency $\lambda$ when $\lambda$ is not a resonance.
Generically (see \Cref{prop:rc1}) this does not happen.
However, the existence of such pathological examples, requires additional definitions and hypotheses on our potential $V(x)$.

\begin{defi}[Resonant condition]\label{def:rc}
For $j\in \Z _{\ge 2}$, denote $\mathcal{RC}_j^\pm= \mathcal{RC}_j^\pm(A_\pm)$ (for \textit{resonant condition}) the set of potentials $V(x)$ satisfying \eqref{eq:169} (with $V_1^\pm \neq 0$) such that
\begin{align}
    V_j^\pm=\sum_{r=2}^j (-1)^r A_\pm ^{2(1-r)} \sum_{\substack{\beta\in \Z_{\ge 1}^r \\ |\beta| = j}} \frac{V_\beta^\pm }{\prod_{k=1}^{r-1} |\beta|_k (j-|\beta|_k)}, \label{eq:421}
\end{align}
where for multiindicies $\beta = (\beta _1 ,\dots ,\beta_r)$, $V_\beta^\pm = V_{\beta_1}^\pm \cdots V_{\beta_r}^\pm$ and $|\beta|_k \coloneq \beta_1 + \cdots + \beta_k$.
Let $\mathcal{RC}^\pm = \bigcup_{j=2}^\infty\mathcal{RC}_j^\pm$ and $\mathcal{RC} = \cup_\pm \mathcal{RC}^\pm$.

For $V(x)$ satisfying \eqref{eq:169}, define
\begin{align}
    \Lambda_\pm   = \set{j \in \Z_{\ge 2} : V\in\mathcal{RC}^\pm _j} .\label{eq:428}
\end{align}
\end{defi}

The resonant condition definition arises from a very special cancellation in constructing outgoing solutions.
Concretely, if $u_\pm(x,\lambda) = e^{\pm i\lambda x } v_\pm (e^{\mp A_\pm x },\lambda)$ is outgoing at $\pm \infty$ with $v_\pm (\bullet , \lambda)$ holomorphic near $0$ (see \Cref{def:outgoing}), then if we choose the normalization of $v_\pm$ as $v_\pm (0 ,\lambda) = \Gamma(1-2i\lambda A_\pm ^{-1})^{-1}$ (which is a natural normalization for ensuring $v_+$ is an entire function of $\lambda$ in the construction of $u_\pm$), then $V\in \mathcal{RC}_j^\pm$ if and only if $u_\pm  (\bullet , -i jA_\pm /2  )$ is identically zero.

\begin{ex}
If $V \in C^\infty (\R; \R)$ (satisfying \eqref{eq:169}) is such that $V(x)\big|_{x\ll-1}= e^{x}$ and $V(x)\big|_{x\gg 1} = e^{-x} + e^{-2x}$ then one can readily check that $A_- = A_+  = 1$, $\Lambda_-  =\emptyset$, and $\Lambda_+ = \set{2}$.
\end{ex}

\begin{ex}[Regge--Wheeler potential in $\mathcal{RC}$]\label{ex:RW in RC}
If $\Lambda = 4 - \sqrt 5$ and $m = \frac{\sqrt 5 - 1 }{6}$, then for $\ell = 1$, $V_\ell \in \mathcal{RC}^+_2$ (recalling \eqref{eq:165}).
\end{ex}

\begin{ex}[P\"oschl--Teller potential in $\mathcal{RC}$]\label{prop:rc3}
The P\"oschl--Teller potential \eqref{eq:PTpotential} is not in $\mathcal{RC}$.
However for the  P\"oschl--Teller potential well, we have
\begin{align}
\frac{-m(m+1)}{\cosh^2 (x)} \in \mathcal{RC}_j^{\pm} \iff m\in \Z_{\ge 1} \text{ and }j\ge m+1.
\end{align}
\end{ex}

Proofs of \Cref{ex:RW in RC} and \Cref{prop:rc3} are in Appendix \ref{section:res_cond_examples}.

\begin{theorem}[Main result for resonant condition potentials]\label{thm:main_rc}
Suppose $V \in C^\infty (\R ; \R)$ satisfies \ref{hyp:1}, $\Lambda_\pm$ are as defined in \eqref{eq:428}, and $U_V (t)$ is the wave propagator for $\Box_V$. 
Then there exist smooth functions $\mathcal{A}_\pm (t)$, $t > 0$ with
\begin{align}
    \mathcal{A}_\pm(t) =- \tfrac 1 2 \sum_{\substack{n \in \Z_{\ge 1}\setminus \Lambda_{\pm}}}e^{-tA_{\pm}n/2 }\label{eq:450}
\end{align}
such that
\begin{align}
\Tr(U_V(t) - U_0(t))  = \tfrac{1}{2}\sum_{\lambda_j \in {\rm{Res}}(V)} e^{-i\lambda_j t} + \mathcal{A}_+ ( t ) + \mathcal{A}_ -( t )  - \tfrac 1 2. \label{eq:507}
\end{align}
\end{theorem}

Note that \Cref{thm:main_rc} is a generalization of the main result. 
It therefore suffices to prove \Cref{thm:main_rc} directly.
For an explicit application of \Cref{thm:main_rc}, see \Cref{prop:581}.

\begin{rem}
Note that if either $\Lambda_\pm = \emptyset$, then (for the corresponding sign) $\mathcal{A}_\pm (t)$ is exactly the same as \eqref{eq:209}.
Moreover, since $\Lambda_\pm \subset \Z_{\ge 2}$, we are guaranteed (by geometric summing in \eqref{eq:450}) that $|\mathcal{A}_\pm(t)|\leq (2(e^{tA_\pm/2}-1))^{-1}$.
\end{rem}

{  A different perspective, relevant to the formulation of our main theorem, can be given using a simple version of {\em Vasy's method} \cite{vasy2013microlocal} -- see \cite{zworski2016resonances} for a detailed but brief presentation and \cite[Chapter V]{dyatlov2019mathematical} for an in-depth textbook introduction and references. }

{  To explain this, we first consider the operator $ - \partial_x^2 + V ( x ) $ on $ [0, \infty ) $, $ V = F ( e^{-Ax } ) $, 
where $ F $ is a {\em smooth} function on $ ( -\varepsilon , 1 ] $, $ \varepsilon > 0 $. 
If we conjugate this operator, the outgoing condition (\Cref{def:outgoing}) 
will eventually become a regularity condition:
\begin{equation}
    \label{eq:conju}
e^{-i \lambda x} (-\partial_x^2 - \lambda^2) e^{i \lambda x}
= -(\partial_x + i\lambda)^2 - \lambda^2 \\
= -\partial_x^2 - 2i\lambda \partial_x.
\end{equation}
We then make a change of variables, $
s = e^{-A x} $ (in some sense it undoes the Regge--Wheeler change of variables but we present it here for general 1D potentials), so that
\[ e^{-i \lambda x} (-\partial_x^2 - \lambda^2) e^{i \lambda x} 
= - (A s \partial_s)^2 + 2i\lambda A s \partial_s \\
= A^2 s \left( -\partial_s s \partial_s + 2i{\lambda} A^{-1}  \partial_s \right) .
\]
We then define the operator
\[ Q_V ( \lambda ) \coloneq A^{-2}s^{-1} e^{-i\lambda x }(-\p_x^2 + V(x))e^{i\lambda x} = -\partial_s s \partial_s + 2i{\lambda} A^{-1}  \partial_s + A^{-2} s^{-1} F ( s ) . \]
If we then consider this operator on $ (-\epsilon , 1 ] $,
it becomes the {\em Vasy operator} in this simplest setting.}

{  In the case of $ P_V $ in Proposition \ref{p:0}, we apply the same construction near $ \pm \infty $ (with the corresponding 
$ A_\pm $ in place of $ A $), and $ s \in ( -\varepsilon, 
1 + \varepsilon ) $  (with $ + \infty $ corresponding to $ 0 $ and
$ - \infty $ to $ 1 $ and $ 1 -s = e^{A_- x }$ for $ 1 -\varepsilon < s < 1$), and an arbitrary change of variables in the middle. This gives $ Q_V ( \lambda )$ which is an elliptic operator 
on $ ( \varepsilon, 1 - \varepsilon ) $, and
\begin{equation}
\label{eq:defQV} Q_V ( \lambda ) = \left\{ \begin{array}{ll}
 -\partial_s s \partial_s + 2i{\lambda} A_+^{-1}  \partial_s + A_+^{-2} s^{-1} F_+  ( s ) ,  &  s \in ( - \varepsilon, \varepsilon ), \\
  -\partial_r r \partial_r + 2i{\lambda} A_-^{-1}  \partial_r + A_-^{-2}
 r^{-1} F_- ( r ) ,  & r := 1-s, \ s \in ( 1 - \varepsilon, 1 + \varepsilon ).
\end{array} \right. \end{equation}
(The parameter $ \varepsilon $ depends on the size of the neighborhoods in which \eqref{hyp:1} is valid.)}

{  We now apply \cite[Theorem 3]{zworski2016resonances}. To state it we recall that
$ \bar H^{s}(a,b) $ denotes restriction to $ (a,b) $ of elements 
of $ H^s ( ( a - \delta, b + \delta )) $ for some $ \delta > 0 $ (that is,
extendable distributions). We then put
\begin{equation}
\label{eq:defXY} \mathscr Y_s:= \bar H^s ( ( -\varepsilon , 1 + \varepsilon ) ) , \ \ \ \mathscr X_s := \{ u \in \mathscr Y_{s+1} : 
Q_V ( 0 )u \in \mathscr Y_s \} . 
\end{equation}
With this notation, we have
\begin{prop}
\label{p:mermaid}
For $ s \in \mathbb R $ and $ {\rm{Im}} \, \lambda > -s - \frac12 $, 
\begin{equation}
\label{eq:Plam}
    \lambda \mapsto Q_V( \lambda )^{-1} \colon \mathscr Y_s \to \mathscr X_s 
    \end{equation}
    is a {\em meromorphic family of operators} with poles of 
    finite rank. In particular, $ \lambda \mapsto Q_V ( \lambda )^{-1}
    \colon \bar C^\infty ( -\varepsilon, 1 + \varepsilon ) \to 
    \bar C^\infty ( -\varepsilon, 1 + \varepsilon ) $ 
    is a meromorphic family of operators.
    \end{prop}
Proposition \ref{p:mermaid} implies Proposition \ref{p:0} (for a more general class of potentials -- see \Cref{rem:611}) as in 
\cite[\S 7]{zworski2016resonances}: there exist operators
\[ T ( \lambda )\colon C_{\rm{c}}^\infty ( \mathbb R_x ) \to C_{\rm{c}}^\infty ( -\varepsilon, 1 + \varepsilon ), \ \ \ U ( \lambda ) \colon \bar C^\infty ( 
( -\varepsilon, 1 + \varepsilon )) \to C^\infty ( \mathbb R ) \]
such that 
\begin{align}
     R_V ( \lambda ) = U ( \lambda ) Q_V ( \lambda )^{-1} T ( \lambda ) . \label{eq:572}
\end{align}
We denote by $ \Res(V) $ the set of poles of
$ R_V (\lambda ) $ with a natural definition of multiplicity -- see Definition~\ref{def:multi}. The set of poles of $ Q_V (\lambda)^{-1} $
is denoted by $ \Res_{A_+,A_-} ( V ) $ with multiplicities defined by \cite[(C.4.6)]{dyatlov2019mathematical}.
By \eqref{eq:572}, we have
\begin{align}
    \Res_V(\lambda ) \subset  \Res_{A_+,A_-} ( V ),  \label{eq:579}
\end{align}
but the inclusion could be strict. The invisibility of a pole of $ Q_V ( \lambda )^{-1}  $ in compact subsets of $ \mathbb R $ (that is, for
$ s \in [ \delta, 1- \delta ] $, $ \delta> 0 $) appears when the co-resonant state of $ Q_V ( \lambda ) $ is supported at infinity, that is at $ s = 0 $ or $ s =1 $. 
}
{In terms of solutions of \eqref{eq:defQV} we have the following characterization:
\[ \lambda \in \Res_{A_+,A_-}  ( V ) \ \Longleftrightarrow \ 
\exists \, w \in C^\infty ( ( -\varepsilon , 1 + \varepsilon ) ), \ \ 
Q_V (\lambda ) w = 0 . 
\]
In terms of $ P_V $ it means that there exists a solution 
\begin{equation} 
\label{eq:resPV} ( P_V - \lambda^2 ) u = 0 , \ \ 
u (x ) = \left\{\begin{array}{ll} e^{ i \lambda x } w_+ ( e^{ -A_+ x } ), & x > R , \\
e^{ - i \lambda x } w_- ( e^{ A_- x } )  , & x < - R , 
\end{array} \right. 
\end{equation}
with $ w_\pm \in C^\infty ( ( - \epsilon, \epsilon ) ) $ and $ R \gg 1 $. As we will see in the proof of \Cref{prop:existenceofoutgoing}
\[ \Res_{A_+,A_-}  ( V ) \setminus \Res ( V )  \subset - \tfrac12 i (  A_+ \Z_{\ge 2}
\cup  A_- \Z_{\ge 2}) . \]
Resonances in $\Res_{A_+,A_-}  ( V ) \setminus \Res ( V )$ exactly correspond to elements of $\Lambda_\pm$ \eqref{eq:428}.

}

{ To see the mechanism for the existence of such resonances, we go back to the simple model of a half-line with $ F \equiv 0 $ and the Dirichlet boundary condition at $ x = 0 $ (that is $ s = 1 $). The adjoint, 
\[ Q_0(\lambda)^* = -\partial_s (s \partial_s) + 2i A^{-1} \bar {\lambda}{\partial_s},
\]
applied to $ \delta ( s ) $ gives $ 
( 1 + 2 i A^{-1} \bar \lambda ) \delta' ( s )$. This vanishes when
$ \lambda = - \frac12 i A $. 
The resonant state, that is the {\em smooth} solution to $ 
Q_0 ( \lambda ) u = 0  $ with $ u ( 1 ) = 0$, is given by 
$ u (s) = 1 - s   $. For more on co-resonant states supported at infinity and the resulting confusion in the physics literature see
\cite{hintz2022quasinormal} and references given there.}

{  
\begin{rem}\label{rem:611}
As noted above, meromorphic continuation of $ Q_V (\lambda )^{-1} $, and hence of $ R_V ( \lambda)  $, holds under weaker assumptions than
\eqref{hyp:1}: only smoothness is needed for $ F_\pm $. For the main goal of the paper, that is, for establishing a Poisson formula for resonances, we need global estimates for the scattering matrix. That seems to require stronger assumptions. A similar issue has been encountered by J\'ez\'equel \cite{Jezequel_globatrace}
in the study of global trace formula for Ruelle--Pollicott resonances. 
Analyticity was needed to replace the ``strip-by-strip" continuation seen
in Proposition \ref{p:mermaid} by a construction giving global estimates.
\end{rem}}

Under the same hypotheses as \Cref{thm:main_rc}, we can therefore write
\begin{equation}\label{eq:626}
\begin{split}
       &\Tr(U_V (t) - U_0(t)) \\
       &\quad\quad= \tfrac{1}{2} \sum_{\lambda_j\in \Res_{A_+,A_-}(V)} e^{-i\lambda _j t }- \tfrac 1 2 \left( \frac{1}{e^{tA_+ /2 } -1} + \frac{1}{e^{tA_- /2} - 1} \right )-\tfrac 1 2 ,
\end{split}
\end{equation}
noting that if $V \notin \mathcal{RC}$, then $\Res_{A_+,A_-}(V) = \Res (V)$ and \eqref{eq:626} is just a restatement of our main result.

We remark that in some sense, it is generic that a potential not lie in $\mathcal{RC}$. More precisely, we have the following qualitative statement.

\begin{prop}[Generically $V \notin \mathcal{RC}$]\label{prop:rc1}
The set of potentials satisfying \eqref{hyp:1} defined by the coefficients $V_j^\pm$ in \eqref{eq:169} generically satisfies $V\notin\mathcal{R}\mathcal{C}$ in the sense that the set of sequences $(V_j^\pm)_{j\geq 1}$ of coefficients satisfying the resonant condition \eqref{eq:421} for some $j\in\mathbb{Z}_{\geq 2}$ is a meagre (i.e., a countable union of nowhere dense subsets) subset of the space of complex sequence $\mathbb{C}^{\mathbb{N}}$ with the product topology.
\end{prop}
\begin{proof}
The proof is simple. We have $V\in\mathcal{R}\mathcal{C}$ if and only if $V\in\bigcup_{j\geq 2}\mathcal{R}\mathcal{C}_j^\pm$. But $V$ belongs to $\mathcal{R}\mathcal{C}_j^\pm$ for some $j$ if and only if the sequence of coefficients $(V_j^\pm)_{j\geq 1}\subset Z_j\times \mathbb{C}^{\mathbb{Z}_{\geq j+1}}\subset\mathbb{C}^{\mathbb{N}}$ where $Z_j$ is the set of coefficients $(V^\pm_1,...,V^\pm_j)\in \mathbb{C}^j$ satisfying \eqref{eq:421}. Since $Z_j$ is the zero set of a polynomial in $\mathbb{C}^j$, it is nowhere dense in $\mathbb{C}^j$. Therefore, $Z_j\times \mathbb{C}^{\mathbb{Z}_{\geq j+1}}$ is nowhere dense in $\mathbb{C}^{\mathbb{N}}$.
\end{proof}

We expect that, outside of a measure zero set (as in \Cref{prop:rc3}), the potentials $V_\ell $ from the Regge--Wheeler change of coordinates are not in $\mathcal {RC}$, but we present this as a conjecture.

\begin{conj}
For each $\Lambda>0$, the measure of the set
\begin{align}
    \set{m \in \left(0,\frac{1}{3\sqrt \Lambda}\right) :   \exists\  \ell \in \Z_{\ge 0} \ \text{s.t.} \ V_\ell (x;m,\Lambda) \in \mathcal{RC} }
\end{align}
is zero where $V_\ell$ are defined in \eqref{eq:165}.
\end{conj}

\begin{ex} \label{prop:581}
Here we present an explicit example of our main result for a potential in $\mathcal{RC}$.

For $m >  0$, define
\begin{align}
    V_m \coloneq \frac{-m(m+1)}{\cosh^2(x)}.
\end{align}
By \cite{cevik2016resonances} and \Cref{ex:PT:resonances}
\begin{align}\label{eq:695}
    \Res (V_m )  =\begin{cases}
        \set{i(m-n) : n \in \Z_{\ge 0}}  \cup \set{-i(m+1+n) : n \in \Z_{\ge 0}}, &  \text{ if }m \in \R_{> 0} \setminus \Z,\\
        \set{i n : n\in \Z \cap [-m,m] }, & \text{ if }m\in \Z_{\ge 1}. 
    \end{cases}
\end{align}
(where in the non-integer case, elements in the intersection of the two sets are resonances with multiplicity two.)

If $m  \in \R_{> 0} \setminus \Z$, we can compute
\begin{align}
    \sum_{\lambda _j \in \Res{V_m}} e^{-i\lambda _j t }= \frac{e^{mt } + e^{-(m+1)t} }{1-e^{-t}} = \frac{\cosh((m+1/2)t)}{\sinh(t/2)}.
\end{align}
While, because $V_m\notin \mathcal{RC}$ (\Cref{prop:rc3}),
\begin{align}
    \mathcal{A}_+(t) + \mathcal A_-(t) = - \frac{e^{-t/2}}{2\sinh(t/2)}
\end{align}
so that our main result \eqref{eq:218} gives us
\begin{align}
    \tr(U_{V_m} (t) - U_0(t)) =\frac{\cosh((m+1/2)t) - e^{-t/2}}{2\sinh(t/2)} -\frac{1}{2}. \label{eq:655}
\end{align}
On the other hand, if 
$m\in \Z_{\ge 1}$, then (by \eqref{eq:695}) 
\begin{align}
    \sum_{\lambda_j \in \Res{V_m}} e^{-i\lambda _j t} = \sum_{j=-m}^{m} e^{jt}
\end{align}
and by \Cref{prop:rc3}, $\Lambda_\pm = \Z\cap [m+1,\infty)$ so that 
\begin{align}
    \mathcal{A_+}(t) = \mathcal{A}_{-}(t) = -\tfrac 1 2 \sum_{j=1}^{m} e^{-tj}
\end{align}
therefore by \Cref{thm:main_rc}
\begin{align}
    \tr(U_{V_m}(t) - U_0(t) ) = \tfrac{1}{2}\sum_{j=1}^{m} e^{jt} - \tfrac  1 2 \sum_{j=1}^{m} e^{-jt} = \sum_{j=1}^{m} \sinh (jt). \label{eq:668}
\end{align}
Combining \eqref{eq:655} and \eqref{eq:668}, we get (for $t > 0$,
\begin{align}
    \tr(U_{V_m} (t)  - U_0 (t)) = \begin{cases} \frac{\cosh((m+1/2)t) - e^{-t/2}}{2\sinh(t/2)} -\frac{1}{2}, & \text{if } m \in \R_{>0} \setminus \Z, \\
    \sum_{j=1}^{m} \sinh (jt), &\text{if } m\in \Z_{\ge 1}.
    \end{cases}
\end{align}
One can verify that this is a continuous function in $m$ suggesting that, while resonances can appear and disappear with small perturbations of the potential, the trace formula is stable, which is captured by our resonant condition definition.
\end{ex}

}

\subsection{Outline of proof}
An outline of a proof of our main result is presented below.
The order of steps below is \textit{not} in the chronological order of the paper, but presented in a way that motivates the required analysis of \S \ref{s:analysisofincoming}.

\begin{enumerate}
    \item The Birman--Kre\u{\i}n trace formula (which we prove for our class of potentials in \Cref{appendix}) relates the trace of $U_{V}(t)-U_0(t)$ to a term depending on negative eigenvalues of $P_V$, a term involving the resonance of $V$ at $0$ (if it exists), and an integral involving the logarithmic derivative of the scattering matrix $S(\lambda)$ for $V$ (see \eqref{BK1}). 
    This is presented in \S \ref{s:birman}.
    \item To apply the Birman--Kre\u{\i}n trace formula, we first need to prove the resolvent $(P_V -\lambda^2)^{-1}$ (defined for $\Im \lambda \gg 1$) can be meromorphically continued to $\C$. 
    This is done by defining and estimating incoming and outgoing solutions $u_\pm (\lambda,x)$ to $(P_V -\lambda^2 )u = 0$ (\S \ref{s:incoutgo})
    \item The logarithmic derivative of the scattering matrix is $ \frac{d}{d\lambda} \log(T(\lambda)/T(-\lambda))$ where $T(\lambda)$ is the transmission coefficient.
    \item The transmission coefficient is computed by taking the Wronskian of the incoming and outgoing solutions (see \eqref{eq:598}).
    \item The term involving the logarithmic derivative of the scattering matrix will have singularities coming from $T(\lambda)$, which are decoupled into three sets (see \eqref{eq:887}). 
    Two of these end up giving the terms $\mathcal{A}_\pm(t)$, while the third term is related to the resonances of $V$ (a Lemma relating zeros of the Wronskian of $u_\pm$ to resonances of $V$, with multiplicity, is required -- which is proven in Appendix \ref{s:proof of lemmainduct}).
\end{enumerate}
 
\smallsection{Notation}
We use the following notation throughout this paper. We write $D_x \coloneq -i \partial_x  $. 
We let $L^2_c$ denote the space of $L^2$ functions with compact support and $L^2_{\rm{loc}}$ the space of locally $L^2$ functions
We let $\norm{\cdot}_1$ denote the trace norm.
For functions $f(x)$ and $g(x)$, we write $f(x) \lesssim g(x)$ if there exists a constant $C>0$ such that $f(x) \le  C g(x)$ for all $x$.
If $C$ depends on a parameter $\alpha$, we write $f(x) \lesssim_\alpha g(x)$.
We write $f(x) \gg 1$ to mean $f(x) >  C $ for some sufficiently large $C >0$.
If $g(x)$ is positive and $N \in \Z$, we write $f = \mathcal{O}(g^N)$ to mean there exists $C_N > 0$ such that $|f(x)|\le C_N g(x)^N$.
We write $f = \mathcal{O}(g^{ \infty})$ if $f = \mathcal{O}(g^N)$ for all $N> 0$ (and similarly $f = \mathcal{O}(g^{- \infty}) \iff f = \mathcal{O}(g^{-N}) \ \forall \ N \in \Z_{>0}$).

\smallsection{Acknowledgments}
We thank Maciej Zworski for suggesting this project and numerous helpful discussions (in particular, his help in generalizing the Birman--Kre\u{\i}n trace formula).
We also thank Peter Hintz for helpful comments about the trace formula in the Schwarzschild-de Sitter setting.
Conversations with Kiril Datchev, Malo J\'ez\'equel, Ethan Sussman, and Jared Wunsch were also very useful.
We are also grateful for an anonymous referee report that led to the development of the definition of resonant conditions.
The first author was supported by NSF grant DMS-2136217. The second author was supported by a fellowship
in the Simons Society of Fellows.

\section{Analysis of incoming and outgoing solutions}\label{s:analysisofincoming}

In this section, we first construct and analyze the behavior of the incoming and outgoing solutions to 
\begin{align}
    (D_x^2 + V (x)) u(x) = \lambda ^2 u(x)  
\end{align}
where $V$ has the asymptotics described by \Cref{hyp:1} and $D_x \coloneq - i \p_x$. We will then use this to show that the corresponding resolvent operator $(P_V-\lambda^2)^{-1}$ can be meromorphically extended to the complex plane.

\subsection{Incoming and outgoing solutions} \label{s:incoutgo}
We begin by recalling the definition for incoming and outgoing solutions for $P_V\coloneq (D_x^2+V(x)-\lambda^2)$. In the sequel, we assume $V$ satisfies \Cref{hyp:1} with $A_\pm $.

\begin{defi}[Outgoing solution]\label{def:outgoing}
    We say $u(x)= u(x,\lambda)$ is an outgoing solution for the operator
    \begin{align}
        P_\lambda \coloneqq (D_x^2 + V (x) - \lambda^2) \label{eq:plambdadef}
    \end{align}
    at $+\infty $ if $P_\lambda u  = 0$ and there exists a globally defined, smooth profile $v_+ (w) = v_+(w,\lambda)$ that is holomorphic in a neighborhood of zero, such that
    \begin{align}
        u (x) = e^{i \lambda x }v_+ (e^{-A_+ x}).
    \end{align}
    Analogously, $u(x) = u(x,\lambda)$ is an outgoing solution for $P_\lambda$ at $-\infty $ if $P_\lambda u  = 0$ and there exists a globally defined, smooth profile $v_- (w)$ that is holomorphic in a neighborhood of zero, such that
    \begin{align}
        u (x) = e^{-i \lambda x }v_- (e^{A_- x}).
    \end{align}
\end{defi}

We also have a similar definition for incoming solutions.
\begin{defi}[Incoming solution]\label{def:incoming}
    We say $u(x) = u(x,\lambda)$ is an incoming solution to $P_\lambda$ at $+\infty $ if $P_\lambda u  = 0$ and there exists a globally defined, smooth profile $w_+ (w)$ that is holomorphic in a neighborhood of zero, such that
    \begin{align}
        u (x) = e^{-i \lambda x }w_+ (e^{-A_+ x}).
    \end{align}
    Analogously, $u(x) = u(x,\lambda)$ is an incoming solution to $P_\lambda$ at $-\infty $ if $P_\lambda u  = 0$ and there exists a globally defined, smooth profile $w_- (w)$ that is holomorphic in a neighborhood of zero, such that
    \begin{align}
        u (x) = e^{i \lambda x }w_- (e^{A_- x}).
    \end{align}
\end{defi}
Our first result in this section shows that for each $\lambda$, we can construct outgoing solutions with the asymptotics described above.
\begin{prop}\label{prop:existenceofoutgoing}
For each $\lambda\in \C$, there exist outgoing solutions for $P_\lambda$ of the form
    \begin{align}\label{eq:336}
        u_\pm (x) =e^{\pm i \lambda x } v_\pm (e^{\mp A_\pm x})
    \end{align}
    where $v_\pm(w) = v_\pm (w,\lambda)$ are holomorphic near $w = 0$ in a neighborhood that is independent of $\lambda$,  {not identically zero,} and  {such that
\[         v_\pm (0) = \frac{1}{\Gamma_{\Lambda_\pm}(1-2i\lambda A_\pm^{-1})}  \]
where 
\begin{align}
    \Gamma_{\Lambda_\pm}(z)\coloneq \Gamma(z)\prod_{j\in \Lambda_\pm}\left(1+\frac{z}{j-1}\right)e^{-\frac{z}{j-1}}.\label{eq:547}
\end{align}
That is, we take the Hadamard factorization of the reciprocal $\Gamma$ function, and we remove all of the canonical factors in the set $\Lambda_\pm$.\footnote{Note that if $\Lambda_\pm = \emptyset$, then $\Gamma_{\Lambda_\pm} = \Gamma$. Moreover, note that the poles of $\Gamma_{\Lambda_\pm}(z)$ are first order and are at the points $\set{j\in \Z_{\le 0} : 1 - j \notin \Lambda_\pm}$.}
}
Moreover, there exists $C,M_\pm > 0$, independent of $\lambda$, such that for $x\in \R$
\begin{equation}
        \abs{\partial_x^j v_\pm (e^{\mp A_\pm x})} \le C \exp\Bigl(C \langle x-M_\pm\rangle\langle\lambda\rangle\log \langle\lambda\rangle \Bigr) , \ \ \  j=0,1. \label{eq:boundonvp}    
    \end{equation}
where we write $\langle z \rangle\coloneq (1+|z|^2)^{\frac{1}{2}}$.
\end{prop}

\begin{proof} We will provide the details for the case $u_+$ as the other case follows from analogous reasoning. 

\Step We begin by making an ansatz
\begin{equation*}
u_+(x) = e^{i\lambda x}v_+(e^{-A_+ x}).
\end{equation*}
It will be convenient in the forthcoming calculations to write as shorthand $w\coloneq e^{-A_+x}$. A straightforward calculation shows that solving $P_{\lambda}u_+=0$ is equivalent to $v_+$ solving the equation
 \begin{align}
    2 \lambda D_x v_+ (w)+ D_x^2 v_+(w)+ V (x)v_+(w)=0. \label{eq:121}
 \end{align}
We can write this equation in terms of of $D_w$ by noting the simple identities $D_x = (-A_+ w ) D_w$ and $D_x^2 =   A_+^2 (w D_w )^2 $. Hence, we find that $  (D_x^2 + V(x) -\lambda^2 ) u_+(x) =0$ if and only if
\begin{align}
    -2\lambda A_+w  D_w v_+ +A_+^2 (wD_w)^2 v_+ + V v_+ = 0. \label{eq:251}
\end{align}
To construct $v_+$, we will begin by formally expanding $v_+$ and $V$ in a power series near $w=0$:
\begin{align}
     v_+(w) = \sum_{j=0}^\infty v_j w^j , \ \ 
    \ V(w) = \sum_{j=1}^\infty V_j w^j,  \label{eq:510}
\end{align}
where the coefficients $V_j$ are defined as in \eqref{hyp:1}. We note that the potential does not have a constant term in this expansion (i.e., $V$ vanishes to first order at $w=0$).

Substituting this into \eqref{eq:251} and matching powers of $w$, we obtain the follow recursion relation for each $j \ge 1$:
\begin{align}
    j A_+^2 (j - 2 i A_+^{-1} \lambda) v_j  = \sum_{\ell = 1}^j V_{\ell}v_{j - \ell}. \label{eq:258}
\end{align}

\Step We now prove the $v_j$'s (the coefficients in the Taylor expansion of $v_+$) have sufficient decay to ensure that $v_+$ is analytic and has a nontrivial radius of convergence that is uniform in $\lambda$. 
We will begin by showing that the coefficients $v_j$ defined by the recursion \eqref{eq:258} are entire functions of $\lambda$,  {and are not all zero}.
 
 {
To simplify notation, in the forthcoming calculations, we set
\begin{align}
    \alpha \coloneq 2iA_+^{-1}\lambda \label{eq:defofalpha}
\end{align}
and define
\begin{align}
    T_j = T_j (\alpha ) \coloneq j A_+^2 (j-\alpha), \quad S_j = S_j (\alpha ) \coloneq \sum_{\ell = 1}^ jV_\ell v_{j - \ell}.
\end{align}
Then \eqref{eq:258} can be rewritten as $T_j v_j = S_j$ so that formally $v_j = T_j ^{-1} S_j$.
We would like to formally write $S_j$ as only involving $v_0$.
For each $j$, we can write
\begin{align}
    S_j  &= \sum_{\ell_1 = 1}^j V_{\ell_1} v_{j-\ell_1} =V_j v_0 +\sum_{\ell_1 = 1}^{j-1} V_{\ell_1} T_{j-\ell_1}^{-1} S_{j-\ell_1} \\
    &= V_j v_0 + \sum_{\ell_1 + \ell _2 = j}V_{\ell_1} V_{\ell_2} T_{j-\ell_1}^{-1} v_0 + \sum_{\ell_1 = 1}^{j-1} V_{\ell_1} T_{j-\ell_1}^{-1}\sum_{\ell_2 = 1}^{j-\ell_1-1} V_{\ell_2} v_{j-\ell_1-\ell_2}. 
\end{align}
We can repeat this until $S_j$ is written as a sum involving only $v_0$'s:
\begin{align}
    S_j  = V_j v_0 + \sum_{r=2}^j \sum_{\substack{\beta\in \Z_{\ge 1}^r\\ |\beta |= j}}  \frac{V_\beta}{\prod_{k=1}^{r-1} T_{j-|\beta|_k}} v_0\label{eq:612}
\end{align}
where $V_\beta = V_{\beta_1}V_{\beta_2}\cdots V_{\beta_r}$, and $|\beta|_k \coloneq \beta_1 + \beta_2 + \cdots +\beta_k$. 
A straightforward computation shows that 
\begin{align}
\sum_{r=2}^j \sum_{\substack{\beta\in \Z_{\ge 1}^r\\ |\beta |= j}}  \frac{V_\beta}{\prod_{k=1}^{r-1} T_{j-|\beta|_k}(j)}=  - \sum_{r=2}^j (-1)^{r}A_+^{2(1-r)} \sum_{\substack{\beta \in \Z_{\ge 1}^r \\ |\beta | = j}}\frac{V_\beta}{ \prod_{k=1}^{r-1} |\beta|_k  (j-|\beta|_k)} \label{eq:875}
\end{align}
which is exactly the expression appearing in the resonant condition definition \eqref{eq:421}.

\begin{rem}
Here is \textit{exactly} where the resonant condition shows up.
The formal relation $v_j = T_{j}^{-1} S_j$ can be satisfied by choosing the correct normalization of $v_0$ to cancel any poles of $T_j^{-1}$.
The term $T_j^{-1}$ has a single pole of order one at $\alpha = j$.
Therefore, setting $v_0 = \Gamma(1-\alpha)^{-1}$ will cancel any pole.
The subtlety is, if $S_{j_0} (j_0) v_0^{-1}$ vanishes, then all the $v_j$'s are zero at $\alpha = j_0$.\footnote{An argument can then be used to show that $u_-$ is outgoing at both infinities at frequency $\alpha = j_0$ -- suggesting that $\lambda = -ij_0A_+^{-1}/2$ is a resonance. However, $\lambda$ is \textit{not} a pole of the resolvent (it is a pole of the inverse of the Vasy operator $Q(\lambda)$ (recall \Cref{p:mermaid}). Note that our definition of outgoing solutions involving $A_\pm$ was somewhat artificial, unlike the case for compactly supported potentials.}
This vanishing of $S_j(j)v_0^{-1}$ is the resonant condition definition \eqref{eq:421}.
\end{rem} 

We now claim that by setting $v_0 = \Gamma_{\Lambda_+}(1-\alpha)^{-1}$ (recall \eqref{eq:547}), the $v_j$'s are entire functions of $\alpha$ and either $v_0(j)\neq 0$ or $v_j (j) \neq 0$.
This follows by induction on $j$. 
For $j=1$, we have $v_1 (\alpha)= V_1 T_1(\alpha)^{-1} v_0(\alpha)$.
This is an entire function of $\alpha$ as the only pole of $T_j$ at $\alpha = 1$ is canceled by the vanishing (of the same order) of $v_0(\alpha)$ and $V_1\neq 0$ ensures that $v_1(1) \neq 0$.
Assume now that all $v_j$'s are entire functions of $\alpha$, then $S_{j+1}$ is an entire function of $\alpha$.
Therefore we must only check that $v_{j+1}$ is holomorphic at $\alpha = j+1$ (from the formula $v_{j+1} = T_{j+1}^{-1} S_{j+1}$).
But now we can write
\begin{align}
    v_{j+1} (j+1) = \lim_{\alpha \to j+1}  \underbrace{(T_{j+1}(\alpha)^{-1})}_{\text{I}}\underbrace{(V_{j+1}+\sum_{r=2}^{j+1}\sum_{\substack{\beta\in \Z_{\ge 1}^r\\ |\beta |= j+1}}  \frac{V_\beta}{\prod_{k=1}^{r-1} T_{j+1-|\beta|_k}(\alpha)} )}_{\text{II}}\underbrace{v_0(\alpha)}_{\text{III}}.
\end{align}
If $j+1 \in \Lambda_+$, then $\text{III}\neq 0$, $\text{II}$ vanishes at $\alpha = j+1$ by the resonant condition definition (using \eqref{eq:875}), and $T_{j+1}$ has a pole of order one at $j+1$ canceling the zero, so that $v_{j+1}$ has no pole at $j+1$.

If $j+1 \notin \Lambda_+$, then $\text{II}$ is nonzero at $\alpha = j+1$, and $\text{III}$ vanishes to first order at $j+1$, canceling the first order pole of $\text{I}$ of first order at $\alpha = j+1$, thus proving $v_{j+1}$ has no pole at $j+1$ and is nonzero there.

}

 {
We next show that for all $\alpha\in \C$, $v_j$'s are not all zero.
First, if $\alpha \in  (\C \setminus \Z_{\ge 1})\cup \Lambda_+$, then $v_0 (\alpha) = \Gamma_{\Lambda_+}(1-\alpha)^{-1} \neq 0$.
Next, if $\alpha = 1$, we see from \eqref{eq:258} that $v_1 (1)=  \lim_{\alpha \to 1} (1- \alpha )^{-1}A_+^{-2} V_1 v_0 (\alpha)\neq 0$ as we assumed $V_1 \neq 0$.
Finally, if $\alpha \in \Z _{\ge 2} \setminus \Lambda_+ $, then we have shown above that $v_\alpha (\alpha ) \neq 0$.

}

\Step For each $j$, we want to obtain a strong enough growth bound for $v_j(\alpha)$ for $\alpha \in \C$. We will do this in two steps. First, we will estimate $v_j(\alpha)$ for $\alpha$ away from $\N$, and then extend this to a global bound by using the maximum principle.

We now use the assumption that $|V_j|\le A^j$ for some positive constant $A$. The following lemma will give us the required estimate away from $\mathbb{N}$.

\begin{lemma}\label{lemma:1}
  Suppose $\e \in (0,A_+^{-2})$ and $\Omega_\e \coloneq \set{z \in \C : \dist(z,\N) > \e}$, then for $\alpha \in \Omega_\e$,
\begin{align}
    \abs{v_j(\alpha)} \le \frac{A^{j}\abs{v_0}}{\e^{j}A_+^{2j}} .
\end{align}  
\end{lemma}
\begin{proof}
    We prove this by induction. For $j=0$ this is clear. We then have
\begin{align}
    \abs{v_j(\alpha) } &\le  \frac{1}{j A_+^2|j-\alpha|} \abs{\sum_{k=0}^{j-1} v_k V_{j-k} }\le \frac{\abs{v_0}}{j A_+^2 \e} \sum_{k=0}^{j-1} A^{k} \e^{-k} A_+^{-2k}A^{j- k}\\
    &= \frac{ |v_0|  A^{j}}{jA_+^{2}\e} \sum_{k=0}^{j-1} \e^{-k}A_+^{-2k}\le \frac{|v_0|A^j}{j A_+^2 \e} \sum_{k=0}^{j-1}\e^{-(j-1)} A_+^{-2(j-1)} = \frac{|v_0|A^j}{\e^j A_+^{2j}}.
\end{align}
The first inequality follows from \eqref{eq:258}, while the second inequality follows from the induction hypothesis, and the third inequality follows from the choice of $\e$.
\end{proof}

Next, we extend Lemma \ref{lemma:1} to a global estimate. We fix $\e _ 0\in (0,\min(A_+^{-2}, 1))$. 
By Lemma \ref{lemma:1}, and the choice of  {$v_0 = \Gamma_{\Lambda_+}(1-\alpha)^{-1}$} and the above steps, we have that $v_j(\alpha)$ are entire functions, and for $\alpha\in \Omega_{\e_0}$, we have
 {
\begin{align}\label{eq:28}
    \abs{v_j(\alpha)}\le \frac{ A^{j}}{\e_0^{j} A_+^{2j} \abs{\Gamma_{\Lambda_+}(1-\alpha)}}.
\end{align}
}

To obtain a global bound for $v_j(\alpha)$, we use the following standard bound of the reciprocal of the Gamma function. It follows from the factorization of $ \Gamma( z)$ and, for instance, \cite[Theorem 1.11]{Hayman1964}.  {This will enable us to obtain a bound on the reciprocal of the modified $\Gamma_{\Lambda_+}$}

\begin{lemma}\label{lemma:gammabound}
    There exist a constant $C > 0$ such that for $z\in \C $,
    \begin{align}
        \abs{\frac{1}{\Gamma(z)}} \le C\exp\left( C \abs z \log (1+ \abs{z}) \right) .
    \end{align}
\end{lemma}
 { Since $\frac{1}{\Gamma_{\Lambda_+}}$ is obtained from $\frac{1}{\Gamma}$ simply by removing a subset of its canonical zero factors, it is an entire function and it has the same growth bound as $\frac{1}{\Gamma}$ (with constants uniform in the choice of subset). Therefore, we have the corresponding lemma for the modified $\Gamma$ function,
\begin{lemma}\label{lemma:modgammabound}
There exists a constant $C>0$ such that for all $z\in\mathbb{C}$,
\begin{equation*}
 \abs{\frac{1}{\Gamma_{\Lambda_+}(z)}} \le C\exp\left( C \abs z \log (1+ \abs{z}) \right).    
\end{equation*}
\end{lemma}
}
Now, for $k\in \N$, we apply the maximum principle and  {Lemma \ref{lemma:modgammabound}} to get that:
\begin{align}
    \max_{|\alpha - k| \le \e_0 } \abs{v_j (\alpha) }&\le \max_{|\alpha - k| = \e_0} \abs{\frac{A^{j}}{\e_0^{j}A_+^{2j} {\Gamma_{\Lambda_+}(1-\alpha)}}}\le \max_{|\alpha - k| = \e_0} \frac{ CA^{j}}{A_+^{2j}\e_0^{j}} e^{C |1-\alpha| \log (|1-\alpha| +1 )} \\
    &\le \frac{C A^{j}}{A_+^{2j}\e_0^{j}} e^{C (k+1) \log(k+2)} \le  \frac{C A^{j}}{A_+^{2j}\e_0^{j}} e^{C k \log(k)}\label{eq:394}
\end{align}
where the constant $C$ may change in each inequality.

We can then use this to get a global bound on $v_j$. We have by \eqref{eq:28} that for $\dist(\alpha,\N) > \e_0$:
\begin{align}
    \abs{v_j (\alpha) }\le \frac{ A^{j}}{\e_0^jA_+^{2j}} e^{C |1 - \alpha | \log (|1-\alpha | + 1))}
\end{align}
and by \eqref{eq:394}, for $|\alpha  - k |\le \e_0$ (for some $k\in \N$):
\begin{align}
  \abs{  v_j(\alpha)} \le  \frac{CA^j}{A_+^{2j}\e_0^j} e^{C k \log (k)}.
\end{align}
Therefore, there exists a $C>0$ such that for all $\alpha \in \C$:
\begin{align}
    \abs{v_j (\alpha )}\le \frac{CA^j}{A_+^{2j}\e_0^j} e^{C (|\alpha| + 1) \log (|1-\alpha| +1)}.
\end{align}
This shows that $v_+(w)$ as defined by \eqref{eq:510} is well-defined in some neighborhood of zero with radius of convergence that is independent of $\lambda$.
\\
\Step A priori, the above construction only ensures that $v_+$ is defined in a neighborhood of $w=0$ (uniformly in $\lambda$). We now carry out a standard energy-type estimate for $P_{\lambda}$ to show that $v_+$ can be extended to a global (in $w$) smooth solution of \eqref{eq:251}. Along the way, we also show the bound \eqref{eq:boundonvp}. We begin by fixing $\delta > 0$ less than the radius of convergence of $\sum_0^\infty v_j w^j$ (which is independent of $\alpha$), to ensure that there exists a $C  = C(\delta )$ (independent of $\alpha$) such that
\begin{align}
    \abs{(\partial_w^j v_+)(\delta) } \le C e^{ C (|\alpha | + 1 ) \log (|1-\alpha | +1)}\label{eq:476}
\end{align}
for $j = 0,1$. We recall that in terms of the original variable $x$, we have $x(\delta) = A_+^{-1} \log(1/\delta)$.
We will now use a simple energy estimate to estimate the growth of $v_+$ and its derivatives.
By \eqref{eq:121}, $v_+$ satisfies the the ODE in the $x$ variable:
\begin{align}
    2 i \lambda \p_x v_+(w) + \p_x^2 v_+(w) - V(x) v_+(w) = 0.
\end{align}
Here, we can rewrite the above equation as a $2\times 2$ system
\begin{equation}\label{matrixeqn}
\p_x \mat{u_1(x) \\ u_2(x)} = \underbrace{\mat{0 & 1 \\ {V(x)} & -2i\lambda }}_{\coloneqq \textbf{A}(x)}\mat{u_1(x) \\ u_2(x)}.
\end{equation}
where $u_1 (x)\coloneq v_+(w(x))$ and $u_2 (x)\coloneq \p_x (v_+(w(x))$. Let $\textbf{U}(x) \coloneq (u_1 (x),u_2(x))^t$, so that $\p_x \textbf{U} = \textbf{A}\textbf{U}$. 
This gives the simple energy identity
\begin{align}
    \frac{d}{dx} \abs{\textbf{U}}^2 = 2 \Re{\ip{\textbf{A} \textbf{U}}{\textbf{U}}}.
\end{align}
By the Cauchy--Schwartz inequality 
\begin{align}
     \frac{d}{dx} \abs{\textbf{U}(x)}^2 \ge -C\langle\lambda\rangle\abs{\textbf{U}(x)}^2
\end{align}
for some positive constant $C$ (independent of $\lambda$). This gives us that:
\begin{align}
    \frac{d}{dx} \left ( e^{C\langle\lambda\rangle x} \abs{\textbf{U}(x)}^2\right ) \ge 0
\end{align}
so that integrating from $x$ to some $x_0>x$, we get that:
\begin{align}
    \abs{\textbf{U}(x)}^2 \le e^{C\langle\lambda\rangle(x_0 - x)} \abs{\textbf{U}(x_0)}^2.
\end{align}
Letting $x_0 = x(\delta)$, we get, by \eqref{eq:476} that there exists a $C>0$ such that for $j= 0,1$
\begin{align}
    \abs{\p_x^j (v_+ (w(x)) } \le C  e^{C\langle\lambda\rangle (x(\delta)- x) + C(|\alpha| + 1 ) \log(|1-\alpha| + 1)}.
\end{align}
This shows that $v_+$ can be continued to a global $C^1$ (in $w$) solution of $P_{\lambda}v_+=0$ and that (by elementary estimates) $v_+$ satisfies the bound \eqref{eq:boundonvp}. To show that $v_+$ is smooth, one can simply apply derivatives to \eqref{matrixeqn} and inductively apply the above argument.
\setcounter{step}{0}
\end{proof}

\subsection{Meromorphic continuation of the resolvent}\label{s:meromorphic continuation}
We now prove a more precise version of Proposition \ref{p:0} which meromorphically extends the resolvent $R_V (\lambda)$ to the entire complex plane.
The poles of the meromorphically continued resolvent will be the zeros of the Wronskian of outgoing solutions to $P_V -\lambda^2$.
This Wronskian is entire in $\lambda$.
A technical Lemma is first required to prove that the Wronskian is not identically zero.

\begin{prop}\label{prop:Wronskian}
There exist purely imaginary $\lambda$ in the upper half plane such that the outgoing solutions $u_+(x,\lambda)$ and $u_-(x,\lambda)$ from Proposition \ref{prop:existenceofoutgoing} are linearly independent. 
In particular, the Wronskian of $u_+(x,\lambda)$ and $u_-(x,\lambda)$
\begin{align}
   W[u_+,u_-](\lambda) \coloneq  (\p_x u_+(x,\lambda) )u_-(x,\lambda) - u_+(x,\lambda) (\p_x u_-(x,\lambda)) \label{eq:wronk}
\end{align}

is not identically zero in $\lambda$.
\end{prop}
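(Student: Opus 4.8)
The plan is to produce an explicit purely imaginary value $\lambda = i\tau$ with $\tau \gg 1$ at which the two outgoing solutions $u_+$ and $u_-$ are linearly independent, by controlling their asymptotic behavior at the opposite end of the line. The key observation is that an outgoing solution at $+\infty$, $u_+(x,\lambda) = e^{i\lambda x} v_+(e^{-A_+ x})$, decays like $e^{i\lambda x} = e^{-\tau x}$ as $x \to +\infty$ when $\lambda = i\tau$, $\tau > 0$; whereas an outgoing solution at $-\infty$, $u_-(x,\lambda) = e^{-i\lambda x} v_-(e^{A_- x}) = e^{\tau x} v_-(e^{A_- x})$, grows like $e^{\tau x}$ as $x \to +\infty$ \emph{provided} its coefficient does not vanish, i.e. provided $u_-$ is not itself (a multiple of) an outgoing solution at $+\infty$. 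So linear independence will follow once I show $u_-$ has a genuinely growing component at $+\infty$ for suitable $\tau$.

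First I would set up the connection problem: since $u_+$ and the incoming solution $\tilde u_+(x,\lambda) = e^{-i\lambda x} w_+(e^{-A_+ x})$ (from Definition \ref{def:incoming}, whose existence is obtained by the same construction as Proposition \ref{prop:existenceofoutgoing}) form a basis of solutions near $+\infty$ — their Wronskian is a nonzero constant in $x$, computable from the leading terms $v_+(0), w_+(0)$ and the exponential factors, and one checks it is $\neq 0$ for $\tau$ away from the poles of $1/\Gamma(1 - 2i\lambda A_+^{-1})$ — I can write $u_- = a(\lambda) u_+ + b(\lambda) \tilde u_+$. Then $W[u_+, u_-] = b(\lambda) W[u_+, \tilde u_+]$, so it suffices to show $b(i\tau) \neq 0$ for some large $\tau$. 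Equivalently, I must rule out that $u_-$ decays at $+\infty$; if it did, $u_-$ would be simultaneously outgoing at both ends, which for $\lambda = i\tau$, $\tau > 0$ large, would make it an honest $L^2$ eigenfunction of the self-adjoint operator $P_V = D_x^2 + V$ with eigenvalue $\lambda^2 = -\tau^2 < 0$.

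Second, I would rule this out by a direct energy/positivity argument on $P_V$. A decaying-at-both-ends solution $u$ of $(D_x^2 + V - \lambda^2)u = 0$ is in $L^2$, and integrating by parts gives $\int |\partial_x u|^2 + \int V|u|^2 = \lambda^2 \int |u|^2 = -\tau^2 \int |u|^2$. Since $V$ is smooth and, by Hypothesis \ref{hyp:1}, bounded (indeed $V \to 0$ at $\pm\infty$, so $\|V\|_{L^\infty} < \infty$), the left side is $\geq -\|V\|_{L^\infty}\int|u|^2$, forcing $-\tau^2 \geq -\|V\|_{L^\infty}$, i.e. $\tau^2 \leq \|V\|_{L^\infty}$. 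Hence for \emph{any} $\tau > \|V\|_{L^\infty}^{1/2}$ (and $\tau$ also chosen to avoid the discrete poles of the relevant Gamma factors so that $W[u_+,\tilde u_+] \neq 0$), no such $u$ exists, so $b(i\tau) \neq 0$, $u_\pm$ are linearly independent, and $W[u_+,u_-]$ is not identically zero in $\lambda$ (it is entire in $\lambda$ by Proposition \ref{prop:existenceofoutgoing} and nonzero at $i\tau$).

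The main obstacle is the bookkeeping in the second paragraph: one must verify carefully that $u_\pm$ are genuinely $L^2$ near their respective ends (immediate from the $e^{i\lambda x}$ and $e^{-i\lambda x}$ factors with $\lambda = i\tau$, $\tau>0$, since the holomorphic profiles $v_\pm$ are bounded near $0$), that $W[u_+,\tilde u_+](\lambda)$ — whose value one reads off from the $x\to+\infty$ limit of $(\partial_x u_+)\tilde u_+ - u_+(\partial_x \tilde u_+)$ — is a nonzero multiple of $\tfrac{1}{\Gamma(1-2i\lambda/A_+)}\cdot\tfrac{1}{\Gamma(1+2i\lambda/A_+)}$ times $(-2i\lambda)$ and so is nonzero for $\lambda=i\tau$ with $\tau>0$ not a half-integer multiple of $A_+$, and that the self-adjointness argument applies (it does: $P_V$ is essentially self-adjoint on $C_0^\infty$ since $V$ is bounded, so $P_V u = -\tau^2 u$ with $u\in L^2$ is impossible when $\tau^2 > \|V\|_\infty \geq -\inf V$). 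Everything else is routine; the only real content is the coincidence ``outgoing at both ends $\Rightarrow$ negative $L^2$-eigenvalue,'' which the spectral lower bound excludes.
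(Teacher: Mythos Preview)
Your argument is correct: for $\lambda=i\tau$ with $\tau^2>\|V\|_{L^\infty}$ and $v_\pm(0)\neq 0$, linear dependence of $u_+$ and $u_-$ would force a nonzero $L^2$ (indeed $H^2$) eigenfunction of $P_V$ with eigenvalue $-\tau^2$, which the quadratic-form lower bound $\langle P_V u,u\rangle\ge -\|V\|_{L^\infty}\|u\|^2$ rules out.

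The paper's proof is different in execution though equivalent in spirit. Instead of an $L^2$/spectral argument, it works directly with the ODE: for $\lambda=i\mu$ with $\mu^2>\sup(-V)+1$ one has $\partial_x^2 u_+ = (V+\mu^2)u_+$, and since $u_+>0$ and $\partial_x u_+\to 0$ at $+\infty$, convexity forces $\partial_x u_+<0$ everywhere, so $u_+$ is strictly decreasing and in particular $\liminf_{x\to-\infty}u_+>0$; since $u_-(x)\to 0$ as $x\to-\infty$, independence follows. Your spectral argument is cleaner to state and generalizes immediately to higher dimensions or to any semibounded $P_V$; the paper's ODE argument is more elementary (no self-adjointness, no Sobolev regularity) and yields slightly more information about $u_+$ itself.

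One remark: the detour through the incoming solution $\tilde u_+$ and the coefficient $b(\lambda)$ is unnecessary and creates the artificial side condition of avoiding the zeros of $1/\Gamma(1+2i\lambda/A_+)$. You can simply say: if $W[u_+,u_-](i\tau)=0$ then, since the solution space of the second-order ODE is two-dimensional and neither $u_\pm$ vanishes identically (because $v_\pm(0)=1/\Gamma(1+2\tau/A_\pm)\neq 0$ for $\tau>0$), one is a scalar multiple of the other, hence decays at both ends, and your energy estimate applies. This removes the need to construct $\tilde u_+$ or to control $W[u_+,\tilde u_+]$ at all.
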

\begin{proof}
If we suppose the Wronskian is identically zero, then $u_+(x,\lambda)$ and
$u_-(x,\lambda)$ are linearly dependent for all $\lambda$.
Setting $\lambda=i\sigma$ for $\sigma\in\mathbb{R}_{>0}$, we have
$u_+(x,i\sigma)=c(\sigma)\,u_-(x,i\sigma)$. Since $u_+$ is outgoing at $+\infty$
and $u_-$ is outgoing at $-\infty$, this implies
$|u_+(x,i\sigma)|\lesssim e^{-\sigma x}$ for $x\gg 1$ and
$|u_+(x,i\sigma)|\lesssim e^{\sigma x}$ for $x\ll -1$.
Hence $u_+(\bullet,i\sigma)\in L^2(\mathbb{R})$, so $-\sigma^2$ is an eigenvalue
of $P_V$ for all $\sigma>0$.
But the negative spectrum of $P_V$ is discrete, so it cannot contain the
continuum $\{-\sigma^2:\sigma>0\}$; this is a contradiction.

\end{proof}
As a consequence of the proposition, we have the following:
\begin{prop}\label{prop:meroresolv}
For $\Im{\lambda} \gg 1$, there exists an operator
\begin{align}
    R_V(\lambda) \coloneq (D_x^2 + V(x) -\lambda^2) ^{-1}\colon L^2 (\R) \to L^2 (\R)
\end{align}
which can be extended to a meromorphic family of operators 
\begin{align}
    R_V (\lambda) \coloneq (D_x^2 + V(x) -\lambda^2)^{-1} \colon L^2_{\rm{c}} (\R) \to L^2_{\rm{loc}} (\R)
\end{align}
for all $\lambda \in \C$.
Moreover, if $u_\pm(x,\lambda)$ are outgoing solutions, as constructed in Proposition \ref{prop:existenceofoutgoing}, then the poles of the resolvent are the $\lambda \in \C$ such that $W[u_+,u_-](\lambda)=0$ (defined in \eqref{eq:wronk}), which are notably independent of $x$.
\end{prop}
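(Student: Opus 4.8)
The plan is to write the resolvent down explicitly as a Sturm--Liouville Green's operator built from the outgoing solutions $u_\pm$ of \Cref{prop:existenceofoutgoing}, and then read off both the meromorphic continuation and the location of the poles from the analytic properties of $u_\pm$ and of $W[u_+,u_-]$.

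First I would fix $\Im\lambda\gg 1$ and observe that $u_+(\cdot,\lambda)$ and $u_-(\cdot,\lambda)$ are then precisely the solutions of $P_\lambda u=0$ lying in $L^2$ near $+\infty$ and near $-\infty$, respectively: as $x\to+\infty$ the profile $v_+(e^{-A_+x})\to v_+(0)$ while $|e^{i\lambda x}|=e^{-x\Im\lambda}$ decays exponentially, and symmetrically for $u_-$ at $-\infty$. Since $P_V=D_x^2+V$ is self-adjoint with $\Spec(P_V)\subset[0,\infty)$ up to finitely many negative eigenvalues, for $\lambda=i\mu$ with $\mu$ large we have $\lambda^2=-\mu^2\notin\Spec(P_V)$, so $(P_V-\lambda^2)^{-1}$ exists on $L^2$; moreover by \Cref{prop:Wronskian} (whose proof handles exactly $\lambda=i\mu$, $\mu$ large) $W[u_+,u_-](\lambda)\neq 0$ on a neighborhood of such $\lambda$. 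The standard construction then identifies the $L^2$ resolvent with the operator whose Schwartz kernel is
\begin{equation*}
G_\lambda(x,y)= -\,W[u_+,u_-](\lambda)^{-1}
\begin{cases} u_+(x,\lambda)\,u_-(y,\lambda), & x\ge y,\\[2pt] u_-(x,\lambda)\,u_+(y,\lambda), & x\le y,\end{cases}
\end{equation*}
the point being that this is an algebraic identity: $u_\pm$ solve $P_\lambda u=0$ and the jump of $\p_x G_\lambda$ across $x=y$ equals $-W/W=-1$, so $(P_V-\lambda^2)_x\,G_\lambda(x,y)=\delta(x-y)$ whenever $W(\lambda)\neq 0$.

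Next I would continue this formula to all of $\C$. By \Cref{prop:existenceofoutgoing}, for each fixed $x$ the maps $\lambda\mapsto u_\pm(x,\lambda)$ and $\lambda\mapsto\p_x u_\pm(x,\lambda)$ are entire, with the locally uniform bounds \eqref{eq:boundonvp}; since the Wronskian of two solutions of a second-order ODE with no first-order term is independent of $x$, the function $\lambda\mapsto W[u_+,u_-](\lambda)$ is entire, and by \Cref{prop:Wronskian} it is not identically zero, so its zero set $Z$ is discrete. For $\lambda\in\C\setminus Z$ the displayed kernel defines an operator $R_V(\lambda)$, and \eqref{eq:boundonvp} shows that for $f\in L^2_{\rm c}(\R)$ one has $R_V(\lambda)f\in L^2_{\rm loc}(\R)$, with $\lambda\mapsto R_V(\lambda)f$ holomorphic on $\C\setminus Z$ and having at worst a pole at each point of $Z$ of order no larger than the order of vanishing of $W$ there. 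The algebraic identity above shows $(P_V-\lambda^2)R_V(\lambda)f=f$ (distributionally) for all $\lambda\notin Z$, and $R_V(\lambda)$ agrees with the genuine $L^2\to L^2$ resolvent on the open set from the previous paragraph; by uniqueness of meromorphic continuation this is the desired extension, which also proves \Cref{p:0}.

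It remains to identify the poles with $Z$. One inclusion is immediate from the formula. For the converse, let $\lambda_0\in Z$, so $u_+(\cdot,\lambda_0)$ and $u_-(\cdot,\lambda_0)$ are linearly dependent. Both are nontrivial solutions of $P_{\lambda_0}u=0$ — this can be checked directly from the recursion \eqref{eq:258} together with the explicit formula for the $v_j$, even at the exceptional values $\alpha\in\Z_{\ge1}$ (and is the content of the lemma proved in \Cref{s:proof of lemmainduct}). Pick $x_0>y_0$ with $u_+(x_0,\lambda_0)\neq0$ and $u_-(y_0,\lambda_0)\neq0$ (possible since a nontrivial ODE solution has isolated zeros). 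For smooth bumps $f,g$ supported near $y_0,x_0$ respectively, $\langle R_V(\lambda)f,g\rangle\sim -W[u_+,u_-](\lambda)^{-1}\,u_+(x_0,\lambda_0)\,u_-(y_0,\lambda_0)\!\int f\!\int g$ as $\lambda\to\lambda_0$, which blows up; hence $\lambda_0$ is a genuine pole.

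I expect the only real subtlety to be this last point: ruling out hidden cancellation and, more sharply, matching the pole orders with the multiplicities of the zeros of $W[u_+,u_-]$ and of the resonances. For the present proposition only the set-theoretic identification is needed, and the refined multiplicity-preserving correspondence between $Z$ and $\Res(V)$ is deferred to \Cref{s:proof of lemmainduct}.
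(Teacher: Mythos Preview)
Your proposal is correct and follows the same approach as the paper: write the resolvent kernel explicitly via variation of parameters using the outgoing solutions $u_\pm$, then invoke \Cref{prop:Wronskian} and the entirety of $W[u_+,u_-]$ to read off the meromorphic continuation. The paper's proof is considerably terser, essentially just stating the kernel formula \eqref{eq:535} and concluding; you supply more detail (the $L^2$ decay of $u_\pm$ for $\Im\lambda\gg1$, the jump computation, and the argument that zeros of $W$ are genuine poles), the last of which the paper effectively defers to \Cref{lemma:inductthing} in \Cref{s:proof of lemmainduct}, as you also note.
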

\begin{proof}
If $f \in L^2(\R)$ and $\Im \lambda \gg 1$, then using the variation of parameters formula, a solution to
\begin{align}
    (D_x^2 + V(x) - \lambda^2 ) u = f
\end{align}
can be written as
\begin{align}
    u(x) = \int_\R R(x,y,\lambda) f(y) \dd y
\end{align}
where:
\begin{align}
    R(x,y,\lambda) \coloneq  \frac{u_+(x,\lambda) u_-(y,\lambda) H(x-y)}{W[u_+,u_-](\lambda)} + \frac{u_-(x,\lambda) u_+(y,\lambda) H(y-x)}{W[u_+,u_-](\lambda)}.\label{eq:535}
\end{align}

Here, $u_\pm$ are solutions to $ (D_x^2 + V(x) - \lambda^2 ) u_\pm =0 $ as constructed in Proposition \ref{prop:existenceofoutgoing}, and $H(x)$ is the Heaviside function. 
By \Cref{prop:Wronskian}, the Wronskian for $u_{\pm}$ is not identically zero. Moreover, because the Wronskian is entire in $\lambda$, it follows that the resolvent can be extended to a meromorphic function of $\lambda$.
 {Because $u_\pm$ do not identically vanish for all $\lambda$, the poles of $R_V(\lambda)$ are exactly where the Wronskian is zero.}
\end{proof}

\subsection{The Scattering Matrix}
Because $V$ decays exponentially, we have the existence of the scattering matrix $S(\lambda)$ \cite[\S 5]{melin1985operator}, whose components we write as
\begin{align}
    S(\lambda) \coloneqq \mat{T(\lambda)  & R_+(\lambda) \\ R_-(\lambda)  & T(\lambda)}\label{eq:645}
\end{align}
where $T(\lambda)$ is the transmission coefficient, $R_+(\lambda)$ is the right reflection coefficient, and $R_-(\lambda)$ is the left reflection coefficient.
{We recall that for compactly supported solutions to $ ( P_V - \lambda^2 ) u = 0$, $ \lambda \in \mathbb R \setminus \{ 0 \} $ have the form
\[   u|_{ x \ll - 1 } = A_- e^{ i \lambda x } + B_- e^{ - i \lambda x} , \ \ 
 u|_{ x \gg 1 } = A_+ e^{ i \lambda x } + B_+ e^{ - i \lambda x} , \]
 and $ S ( \lambda ) $ takes the incoming terms to the outgoing terms, with the 
 convention that for $ V = 0 $ the scattering matrix is the identity:
 \[  S( \lambda ) \begin{pmatrix}  A_- \\ B_+ \end{pmatrix} = \begin{pmatrix}  A_+ \\ B_- \end{pmatrix} .
 \]
In our situation the same definition applies but now in an asymptotic sense as
$ x \to \pm \infty $.}

Let $u_\pm = u_\pm(x,\lambda)$ be outgoing solutions at $\pm \infty$ and let $w_\pm = w_\pm (x,\lambda)$ be incoming solutions at $\pm \infty$ (recalling Definitions \ref{def:outgoing} and \ref{def:incoming}) so that 
\begin{align}
    u_\pm (x)|_{\pm x \gg 1} = e^{\pm i\lambda x} v_\pm (e^{\mp A_\pm x}) , \ \  w_\pm (x)|_{\pm x \gg 1} = e^{\mp i\lambda x} \tilde v_\pm (e^{\mp  A_\pm x}). 
\end{align}

For now, we will suppress the $\lambda$ in the arguments of $u_\pm$ and $w_\pm$.
Because $u_+$ and $w_+$ are linearly independent, there exist constants $a,b\in \C$ (depending on $\lambda$) such that
\begin{align}
    u_-(x) = a u_+(x) + bw_+(x). \label{eq:613}
\end{align}

The scattering matrix will then satisfy
\begin{align}
    S(\lambda) \mat{0 \\ b\tilde v_+(0)} = \mat{{ }av_+(0)\\v_- (0)}\label{eq:661}
\end{align}
so that
\begin{align}
    T(\lambda) b\tilde v_+(0) = v_-(0).\label{eq:625}
\end{align}

The transmission coefficient $T(\lambda)$ can be related to the Wronskian of $u_+$ and $u_-$.
Indeed, using \eqref{eq:613},
\begin{align}
    W[u_+,u_-](\lambda) &= b W[u_+,w_+] (\lambda)= b \lim_{x\to \infty} W[e^{i\lambda x} v_+ (e^{-A_+ x}),e^{-i\lambda x }\tilde v_+(e^{-A_+ x})]\\
    &= b2i\lambda v_+(0) \tilde v_+(0)= \frac{2i \lambda v_+(0)v_-(0)}{T(\lambda)}\label{eq:672}
\end{align}
where in the last equality we used \eqref{eq:625}.

Finally using \Cref{prop:existenceofoutgoing}, we can rewrite \eqref{eq:672} as  {
\begin{align}
    T(\lambda) = \frac{2i\lambda }{\Gamma_{\Lambda_+}(1- 2i \lambda A_+^{-1} )\Gamma_{\Lambda_-}(1- 2i \lambda A_-^{-1})W[u_+,u_-](\lambda)}\label{eq:598}
\end{align}
}
which is a meromorphic function and has poles only at the zeros (in $\lambda$) of $W[u_+,u_-](\lambda)$. 

\begin{rem}
    Observe that, besides $\lambda = 0$, the only zeros of $T(\lambda)$ are at the poles of  {$\Gamma_{\Lambda_+}(1- 2i \lambda A_+^{-1} )$ and $\Gamma_{\Lambda_-}(1- 2i \lambda A_-^{-1})$} where $W[u_+,u_-](\lambda)\neq 0$, which are along the negative imaginary axis at the points
 {
\begin{align}
    \set{-\frac{1}{2} A_\pm i k : k  \in \Z_{\ge 1} \setminus \Lambda_{\pm}}.
\end{align}
}
By unitarity of the scattering matrix, we see that these points provide ``false'' poles of the scattering matrix along the positive imaginary axis at the points:
 {
\begin{align}
    \set{\frac{1}{2} A_\pm i k : k  \in \Z_{\ge 1} \setminus \Lambda_{\pm}}\setminus \set{\lambda \in \C : W[u_+,u_-](-\lambda ) = 0}.
\end{align}
}
The existence of such poles contradicts Heisenberg's original hypothesis that all poles of the scattering matrix correspond to resonances.\footnote{See for instance \cite[\S 12.1.2]{newton2013scattering} for a discussion of false poles in the context of Jost functions. See also \cite{Pais1995} for a bibliographical account of Jost which includes a brief discussion, with references, of ``false'' poles.
As explained by Graham--Zworski \cite{graham2003scattering}, these ``false'' poles also play an important role in scattering on asymptotically hyperbolic spaces and its relation to the conformal structure of the boundary at infinity.
}
These ``false'' poles will give rise to the $\mathcal{A}_\pm (t)$ appearing in our trace formula \eqref{eq:218}.
\end{rem}

\section{Applying the Birman--Kre\u{\i}n trace formula}\label{s:birman}

We are now prepared to apply the Birman--Kre\u{\i}n trace formula to prove our main result.
Recall we are computing $\Tr(U_V(t) - U_0(t))$, where $U_V(t) \coloneq \cos (t\sqrt{P_V})$ is the wave propagator for a potential $V(x)$.
We fix $\phi \in C_0^\infty ((0,\infty))$\footnote{There is a singularity at $t=0$ which requires us to choose $\phi$ supported in positive time.}, so that
\begin{align}\label{traceapp}
    \Tr(U_V(t) - U_0(t)) (\phi) &=  \Tr(\cos(t\sqrt{P_V}) - \cos(t\sqrt{P_0}) )(\phi) \\
    &= \Tr(f(P_V) - f(P_0) ) \label{eq:763}
\end{align}
where $f(\lambda^2 ) = \frac{1}{2} (\hat \phi (\lambda) + \hat \phi (-\lambda))$. Equation \eqref{eq:763} follows by writing
\begin{align*}
    \cos\left(t \sqrt{P_V}\right ) (\phi) &= \tfrac{1}{2}\int_{-\infty}^\infty \left( e^{it \sqrt{P_V} } +  e^{-it\sqrt{P_V}}\right) \phi(t) \dd t\\
    &= \tfrac{1}{2} (\hat \phi (\sqrt{P_V}) + \hat \phi(-\sqrt{P_V})) = f(P_V).
\end{align*}

By the Birman--Kre\u{\i}n trace formula (\Cref{t:BK}), we obtain
\begin{align}
   \begin{split}
        \Tr(f(P_V) - f(P_0)) = \frac{1}{2\pi i } &\int_0^\infty f(\lambda^2 ) \Tr (S(\lambda)^{-1} \p_\lambda S(\lambda )) \dd \lambda \\
        &+ \sum_{j=1}^k f(E_j) + \tfrac{1}{2} (m_R(0) - 1) f(0)
   \end{split}  \label{BK1}
\end{align}
where $E_j$ are the negative eigenvalues of $D_x^2 + V(x)$. 
Note that if $V$ is positive (which we do not necessarily assume here), there are no negative eigenvalues.

We now focus on the first term on the right-hand side of \eqref{BK1}.
The term we are integrating against $f(\lambda)$ is
\begin{align}
  \mathcal{G}(\lambda)\coloneq  \Tr \left ( S(\lambda)^{-1} \p_\lambda S(\lambda ) \right) & = \frac{d}{d\lambda} \log \det (S(\lambda))=\frac{d}{d\lambda} \log \left(  T(\lambda)/T(-\lambda)\right)\label{eq:682}
\end{align}
where the first equality follows from Jacobi's formula and the second is a standard fact about the scattering matrix (see, for instance, \cite[\S 2.10 Exercise 3]{dyatlov2019mathematical}).
Moreover, we have by \eqref{eq:598}
 {
\begin{align*}
    T(\lambda) = \frac{2i\lambda}{\Gamma_{\Lambda_-}(1-\alpha_-)\Gamma_{\Lambda_+}(1-\alpha_+)F(\lambda)}
\end{align*}
}
where 
\begin{align}
    F(\lambda) \coloneq W[u_+ (\cdot ,\lambda ) ,u_-(\cdot ,\lambda)]\label{def:F}
\end{align}
is entire in $\lambda$, with $u_+$ and $u_-$ constructed in \Cref{prop:existenceofoutgoing} and $\alpha_\pm = 2i\lambda A_{\pm}^{-1}$.
Therefore, we can expand
 {
\begin{equation} \label{eq:740}
\begin{split}
    \log(T(\lambda) / T(-\lambda) ) &= \log(\Gamma_{\Lambda_-}(1+\alpha_-)) - \log(\Gamma_{\Lambda_-}(1 - \alpha_-))+\log(\Gamma_{\Lambda_+}(1+\alpha_+)) \\
    & \qquad \qquad- \log(\Gamma_{\Lambda_+}(1-\alpha_+))    + \log(F(-\lambda)) - \log (F(\lambda)).
    \end{split}
\end{equation}
}
Computing the derivative, we obtain
\begin{equation}
\mathcal{G}(\lambda) = \frac{d}{d\lambda}\log(T(\lambda)/T(-\lambda))(\lambda)=\mathcal{G}_{\Gamma}^+(\lambda)+\mathcal{G}_{\Gamma}^-(\lambda)+\mathcal{G}_F(\lambda) \label{eq:887}
\end{equation}
where we define
 {
\begin{align}
    \mathcal{G}_\Gamma^\pm(\lambda)  & \coloneq \frac{d}{d\lambda} \left( \log(\Gamma_{\Lambda_\pm}(1+\alpha_\pm)) - \log(\Gamma_{\Lambda_\pm}(1 - \alpha_\pm))\right)\\
    &= \frac{2iA_\pm^{-1} \Gamma_{\Lambda_\pm}'(1 + \alpha_\pm)}{\Gamma_{\Lambda_\pm}(1 + \alpha_\pm)} + \frac{2iA_\pm^{-1}\Gamma_{\Lambda_\pm}'(1-\alpha_\pm)}{\Gamma_{\Lambda_\pm}(1-\alpha_\pm)} \label{eqa2}
\end{align}
}
and
\begin{align}
\mathcal{G}_F(\lambda)\coloneq \frac{d}{d\lambda}(\log(F(-\lambda)) - \log (F(\lambda))). \label{eq:751}
\end{align}
Inserting this into the first term on the right-hand side of \eqref{BK1} gives
\begin{equation}\label{firsttermcontributions}
\begin{split}
    &\frac{1}{2\pi i}\int_{0}^{\infty}f(\lambda^2)\Tr(S(\lambda)^{-1}\partial_{\lambda}S(\lambda))\dd \lambda\\
   =&\frac{1}{2\pi i}\int_{0}^{\infty}f(\lambda^2)\left(\mathcal{G}_{F}(\lambda)+\mathcal{G}_{\Gamma}^+(\lambda)+\mathcal{G}_{\Gamma}^-(\lambda)\right)\dd \lambda
\end{split}
\end{equation}
It will turn out to be relatively straightforward to analyze the contribution of $\mathcal{G}_{\Gamma}^\pm$ in \eqref{firsttermcontributions} using  {an analogue of} the well-known series expansion of the digamma function. 
On the other hand, to analyze $\mathcal{G}_F$, we will need some precise quantitative information about the distribution of the zeros of $F$.
Our starting point is to observe that, thanks to \eqref{eq:boundonvp} and the fact that the Wronskian is constant in $x$, we have the upper bound
\begin{equation*}
|F(\lambda)|\lesssim e^{C|\lambda|\log(1+|\lambda|)}.
\end{equation*}
Since $F$ is entire in $\lambda$, this yields the Hadamard factorization (cf. \cite[Theorem 1.9]{Hayman1964})
\begin{align}
    F(\lambda) = e^{a\lambda + b  }\lambda^m \prod_{j=1}^\infty\left( 1 - \frac{\lambda }{\lambda_j} \right) e^{\lambda / \lambda_j}.
\end{align}
for some $a,b\in\mathbb{C}$. Here $m$ is the multiplicity of the zero at $0$ and the $\lambda_j$ are the nonzero roots of $F$. We then observe the following (formal at this point) expansion of $\log(F(\lambda))$,
\begin{align}
    \log(F(\lambda)) = (a\lambda + b) + m \log (\lambda) + \sum_{j=1}^\infty \left( \log\left(1 - \frac{\lambda}{\lambda_j} \right)  + \frac{\lambda}{\lambda_j} \right)
\end{align}
so that
\begin{align}
    \frac{d}{d\lambda}  \log(F(\lambda))  &= a + \frac{m}{\lambda} + \sum_{j=1}^\infty \left( \frac{1}{\lambda-\lambda_j} + \frac{1}{\lambda_j}  \right). 
\end{align}
Similarly, we have
\begin{align}
      \frac{d}{d\lambda}  \log(F(-\lambda))&= -a + \frac{m}{\lambda} + \sum_{j=1}^\infty \left( \frac{1}{\lambda_j +\lambda}   - \frac{1}{\lambda_j}\right).
\end{align} 
Therefore, we have the following formal expansion of $\mathcal{G}_F$ (defined in \eqref{eq:751}),
\begin{align}
    \mathcal{G}_F(\lambda) = -2a + \sum_{j=1}^\infty \left( \frac{1}{\lambda_j +\lambda} + \frac{1}{\lambda_j - \lambda}    - \frac{2}{\lambda_j}\right). \label{AFdef}
\end{align}
We observe in particular that $\mathcal{G}_F$ is an even function in $\lambda$. In order to compute the contribution of this term in \eqref{firsttermcontributions} and rigorously justify the forthcoming manipulations, we will need the following proposition, which gives a precise description of the location and density of the zeros of $F$.
\begin{prop}\label{Fprop}
The roots of $F(\lambda) = W[u_+,u_-](\lambda)$ satisfy the following properties
\begin{enumerate}
    \item \label{F:prop1} There exists $C>0$ such that:
    \begin{align}
        \set{F^{-1}(0)} \subset \set{ i y : y\in [0,C]} \cup \set{z \in \C : \Im{z} < 0}.
    \end{align}
    \item \label{F:prop2} If $F(\lambda) = 0$, then $F(-\bar\lambda)= 0$.\footnote{In fact for all $z\in \C$, $F(z) = \overline{F(-\bar z)}.$} 
    \item \label{F:prop3} There exists $\delta>0$ such that $F$ does not vanish anywhere on the strip $S_{\delta}\coloneq\{\lambda \in \C : -\delta<\operatorname{Im}(\lambda)<0\}$.\footnote{We remark that \cite{martinez2002resonance} can be used to show that $F$ does not vanish on a logarithmic region, however this is not needed in this article.}
    \item \label{F:prop4} For each $\e>0$, there exists a $C>0$ such that for all $r >0$
    \begin{align}
        \#\set{z \in \C : F(z) =0 , |z| \le r}\le C (1+r^{1+\e}).
    \end{align}
    
\end{enumerate}
\end{prop}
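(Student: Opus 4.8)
The plan is to establish the four assertions separately, in the order \eqref{F:prop1}, \eqref{F:prop2}, \eqref{F:prop4}, \eqref{F:prop3}, since \eqref{F:prop3} is the delicate one and its proof reuses a fact extracted in \eqref{F:prop1}. For \eqref{F:prop1}: a zero $\lambda_0$ of $F$ means (by \eqref{eq:wronk} and Proposition~\ref{prop:meroresolv}) that $u_+(\cdot,\lambda_0)$ and $u_-(\cdot,\lambda_0)$ are proportional, so there is a nonzero $u$ with $(P_V-\lambda_0^2)u=0$ which is outgoing at both ends. If $\Im{\lambda_0}>0$, then $u\sim v_+(0,\lambda_0)e^{i\lambda_0 x}$ decays at $+\infty$ and $u\sim(\text{const})e^{-i\lambda_0 x}$ decays at $-\infty$, so $u\in L^2(\R)$ is an eigenfunction of the self-adjoint operator $P_V=D_x^2+V$; hence $\lambda_0^2\in\R$, which together with $\Im{\lambda_0}>0$ forces $\lambda_0\in i(0,\infty)$, and then $\lambda_0^2<0$ is an eigenvalue, so $|\lambda_0|^2=-\lambda_0^2\le\|\max(-V,0)\|_{L^\infty}<\infty$ ($V$ is bounded, being smooth and vanishing at $\pm\infty$). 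If instead $\lambda_0\in\R\setminus\{0\}$, then $W(x):=\overline u\,u'-u\,\overline u'$ is independent of $x$, since $\partial_x W=\overline u(V-\lambda_0^2)u-u(V-\lambda_0^2)\overline u=0$; as $v_\pm(0,\lambda_0)=\Gamma(1-2iA_\pm^{-1}\lambda_0)^{-1}\ne 0$ for real $\lambda_0\ne 0$, evaluating $W$ at $\pm\infty$ gives $2i\lambda_0|v_+(0,\lambda_0)|^2=W=-2i\lambda_0|c|^2|v_-(0,\lambda_0)|^2$ (where $u=c\,u_-$ near $-\infty$), which is impossible. This proves \eqref{F:prop1} with $C=\|\max(-V,0)\|_{L^\infty}^{1/2}$.

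For \eqref{F:prop2} I will show the stronger statement $F(z)=\overline{F(-\bar z)}$. Since $V$ and the coefficients $V_j$ of \eqref{hyp:1} are real, $\overline{2iA_\pm^{-1}(-\bar\lambda)}=2iA_\pm^{-1}\lambda$, and $1/\Gamma$ is entire and real on $\R$, the (entire) coefficients $v_j(\lambda)$ produced by the recursion \eqref{eq:258} in Proposition~\ref{prop:existenceofoutgoing} (with $v_0=1/\Gamma(1-2iA_\pm^{-1}\lambda)$) satisfy $\overline{v_j(-\bar\lambda)}=v_j(\lambda)$ for all $j$, by induction on $j$; hence $\overline{u_\pm(x,-\bar\lambda)}=u_\pm(x,\lambda)$ for $x\in\R$, and since $W[\overline f,\overline g]=\overline{W[f,g]}$ for real $x$, taking Wronskians yields $F(\lambda)=\overline{F(-\bar\lambda)}$. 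For \eqref{F:prop4}: by \eqref{eq:boundonvp} and constancy of the Wronskian, $|F(\lambda)|\lesssim e^{C|\lambda|\log(1+|\lambda|)}$; writing $F(\lambda)=\lambda^m\widetilde F(\lambda)$ with $\widetilde F(0)\ne 0$ and applying Jensen's formula to $\widetilde F$ on $\{|\lambda|=2r\}$ gives, for the zero-counting function $n(r)$ of $\widetilde F$, the bound $n(r)\log 2\le\int_r^{2r}\tfrac{n(t)}{t}\,dt\le\log\tfrac{\max_{|\lambda|=2r}|\widetilde F(\lambda)|}{|\widetilde F(0)|}\lesssim r\log(1+r)+O(1)$; since $r\log(1+r)=o(r^{1+\e})$ and $F$ has only one further zero (at $0$, of fixed multiplicity $m$), \eqref{F:prop4} follows.

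Assertion \eqref{F:prop3} is the main point. Set $a:=\min(A_+,A_-)$, fix $\gamma:=a/4$, and let $m(x):=e^{-\gamma\langle x\rangle}$. By \eqref{hyp:1} one has $|V(x)|\lesssim e^{-a|x|}$, so $m^{-2}V=e^{2\gamma\langle\cdot\rangle}V$ is bounded; and the free resolvent $R_0(\lambda)=(D_x^2-\lambda^2)^{-1}$, with kernel $\tfrac{i}{2\lambda}e^{i\lambda|x-y|}$, conjugates to a Hilbert--Schmidt operator $mR_0(\lambda)m$ with $\|mR_0(\lambda)m\|_{L^2\to L^2}\lesssim|\lambda|^{-1}$, uniformly on $\{|\Im{\lambda}|\le\gamma/2\}$. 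Using the explicit kernel \eqref{eq:535} together with the crude a priori bound $|u_\pm(x,\lambda)|\lesssim_\lambda e^{|\Im{\lambda}||x|}$ (locally uniform in $\lambda$, which follows from $u_\pm=e^{\pm i\lambda x}v_\pm(e^{\mp A_\pm x})$ near $\pm\infty$ and from expressing $u_\pm$ in a Jost basis near $\mp\infty$), one checks that $mR_V(\lambda)m$ continues to a meromorphic family $L^2\to L^2$ on $\{|\Im{\lambda}|<\gamma\}$ whose poles are among the zeros of $F$. The second resolvent identity $\bigl(I+(mR_0(\lambda)m)(m^{-2}V)\bigr)(mR_V(\lambda)m)=mR_0(\lambda)m$, valid for $\Im{\lambda}\gg 1$, then holds as an identity of meromorphic families on $\{|\Im{\lambda}|<\gamma\}$. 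There is $C_0>0$ such that $\|(mR_0(\lambda)m)(m^{-2}V)\|<1$ whenever $|\lambda|\ge C_0$ and $|\Im{\lambda}|\le\gamma/2$; on that region the first factor is boundedly invertible, so $mR_V(\lambda)m=\bigl(I+(mR_0m)(m^{-2}V)\bigr)^{-1}(mR_0m)$ is analytic, hence $F(\lambda)\ne 0$ there. In the bounded region $\{|\lambda|\le C_0,\ \Im{\lambda}\le 0\}$, $F$ (entire and not identically zero by Proposition~\ref{prop:Wronskian}) has finitely many zeros, and by \eqref{F:prop1} none of them lies on $\R$ except possibly $\lambda=0$; choosing $\delta>0$ smaller than $\gamma/2$ and than $\min|\Im{\lambda_0}|$ over those finitely many non-real zeros gives $F\ne 0$ on $S_\delta$, which is \eqref{F:prop3}.

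The one genuinely technical step above is verifying that the meromorphic continuation of $R_V$ survives in the exponentially weighted space $mL^2$ with no new poles; this is routine given the explicit kernel \eqref{eq:535} and the decay/growth of $u_\pm$, but it is where the bookkeeping sits, and it is precisely what forces $\gamma$ — hence the width $\delta$ of the resonance-free strip — to be controlled by the decay rate $\min(A_+,A_-)$ of $V$. (Alternatively, one could invoke the complex-scaling estimate of \cite{martinez2002resonance}, cited in the footnote to \eqref{F:prop3}, which gives the stronger statement that $F$ does not vanish in a logarithmic neighbourhood of $\R$.)
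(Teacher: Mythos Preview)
Your proof is correct and follows essentially the same strategy as the paper: Jensen's formula for \eqref{F:prop4}, and for \eqref{F:prop3} a Neumann-series inversion valid for $|\lambda|$ large in a strip (you use exponential weights $m=e^{-\gamma\langle x\rangle}$ and the second resolvent identity $(I+(mR_0m)(m^{-2}V))(mR_Vm)=mR_0m$, while the paper uses the Froese-type weights $|V|^{1/2}$ with $\mathbf R_V=V^{1/2}R_0|V|^{1/2}$ and a slightly more elaborate resolvent formula \eqref{eq:resol}). Your treatment of \eqref{F:prop1} and \eqref{F:prop2} is more explicit and self-contained than the paper's --- which simply cites these as standard facts --- in particular your flux argument excluding real nonzero zeros and the explicit bound $C=\|\max(-V,0)\|_{L^\infty}^{1/2}$.
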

For ease of exposition, we postpone the proof of this proposition until the end of the section.

We now compute the contribution of $\mathcal{G}_{F}$ in \eqref{firsttermcontributions}. Using that $\mathcal{G}_{F}$ is even, we observe the expansion
\begin{align}
\frac{1}{2\pi i} \int_0^\infty f(\lambda^2 )\mathcal{G}_{F}(\lambda)\dd \lambda&= \frac{1}{4\pi i} \int_0^\infty (\hat \phi(\lambda) + \hat \phi (-\lambda)) \mathcal{G}_F(\lambda)\dd \lambda\\
&= \frac{1}{4\pi i} \int_{-\infty}^\infty \hat \phi (\lambda)\mathcal{G}_F(\lambda)\dd \lambda \label{eq:619}
\end{align}
To compute the integral in the second line, our first objective will be to establish the uniform summability bound
\begin{lemma}\label{summabilityestimate}
Let $\mathcal{G}_F^j$ denote the $j$th summand in \eqref{AFdef}. Then, for every $\epsilon>0$ sufficiently small, we have the uniform bound
\begin{equation}\label{unifbound}
\sum_{j=1}^{\infty}|\mathcal{G}_F^j(\lambda)|\leq C_{\epsilon}\langle\lambda\rangle^{1+\epsilon},\hspace{5mm}\lambda\in\mathbb{R}
\end{equation}
with $C_\e > 0$.
\end{lemma}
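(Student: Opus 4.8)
The plan is to work entirely from the explicit closed form of the summands. Putting the three fractions in \eqref{AFdef} over a common denominator, for $\lambda\in\mathbb R$,
\begin{align*}
\mathcal{G}_F^j(\lambda) &= \frac{1}{\lambda_j+\lambda}+\frac{1}{\lambda_j-\lambda}-\frac{2}{\lambda_j} = \frac{2\lambda^2}{\lambda_j(\lambda_j^2-\lambda^2)},\\
|\mathcal{G}_F^j(\lambda)| &= \frac{2\lambda^2}{|\lambda_j|\,|\lambda_j-\lambda|\,|\lambda_j+\lambda|}.
\end{align*}
Since each summand is even in $\lambda$, and since for $\lambda$ in a fixed compact set the tail of the series is dominated by $C\lambda^2\sum_j|\lambda_j|^{-3}<\infty$ (finiteness of this sum following from \Cref{Fprop}\eqref{F:prop4}), so that $\sum_j|\mathcal{G}_F^j(\lambda)|$ is finite and locally bounded, it suffices to prove \eqref{unifbound} for $\lambda\ge 1$. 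The structural input is the zero description of \Cref{Fprop}: by parts \eqref{F:prop1} and \eqref{F:prop3}, every nonzero root of $F$ either lies on the segment $i(0,C]$ — and only finitely many do, by \eqref{F:prop4} — or has $\operatorname{Im}\lambda_j\le-\delta$. For a root $\lambda_j=iy_j$ on the segment one computes $|\lambda_j-\lambda|\,|\lambda_j+\lambda|=\lambda^2+y_j^2\ge\lambda^2$, so $|\mathcal{G}_F^j(\lambda)|\le 2/|\lambda_j|$ uniformly in $\lambda$, and these finitely many terms contribute $O(1)$. For a ``bulk'' root ($\operatorname{Im}\lambda_j\le-\delta$) and $\lambda\in\mathbb R$ one has the three elementary lower bounds $|\lambda_j|\ge\delta$, $|\lambda_j-\lambda|\ge|\operatorname{Im}\lambda_j|\ge\delta$, $|\lambda_j+\lambda|\ge\delta$, together with the triangle inequality $|\lambda_j-\lambda|+|\lambda_j+\lambda|\ge 2\lambda$, whence $\max(|\lambda_j-\lambda|,|\lambda_j+\lambda|)\ge\lambda$.

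I would then split the bulk roots into two regimes. If $|\lambda_j|\ge 2\lambda$, then $|\lambda_j\mp\lambda|\ge\tfrac12|\lambda_j|$, so $|\mathcal{G}_F^j(\lambda)|\le 8\lambda^2|\lambda_j|^{-3}$; summing over the dyadic shells $2^k\lambda\le|\lambda_j|<2^{k+1}\lambda$, $k\ge1$, and bounding the number of roots in each shell by $C_\epsilon(2^k\lambda)^{1+\epsilon}$ via \Cref{Fprop}\eqref{F:prop4}, one obtains a convergent geometric series summing to $\lesssim_\epsilon\lambda^2\cdot\lambda^{\epsilon-2}=\lambda^\epsilon$. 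If instead $\delta\le|\lambda_j|<2\lambda$, I claim $|\mathcal{G}_F^j(\lambda)|\lesssim_\delta 1$: when $|\lambda_j|<\lambda/2$ both $|\lambda_j\mp\lambda|\ge\lambda/2$, so the term is $\le 8/|\lambda_j|\le 8/\delta$; when $|\lambda_j|\ge\lambda/2$ I use $|\lambda_j-\lambda|\,|\lambda_j+\lambda|=\max(\cdot)\min(\cdot)\ge\lambda\cdot\delta$ to get the term $\le 2\lambda^2/((\lambda/2)\cdot\lambda\delta)=4/\delta$. Since the number of roots with $|\lambda_j|<2\lambda$ is at most $C_\epsilon\langle\lambda\rangle^{1+\epsilon}$ by \Cref{Fprop}\eqref{F:prop4}, this regime contributes $\lesssim_{\epsilon,\delta}\langle\lambda\rangle^{1+\epsilon}$. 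Adding the three contributions (segment roots, $|\lambda_j|\ge 2\lambda$, $|\lambda_j|<2\lambda$) yields \eqref{unifbound}, with $C_\epsilon$ depending also on the fixed constant $\delta$ supplied by \Cref{Fprop}.

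The only genuinely delicate point is the subcase $|\lambda_j|\asymp\lambda$ of the second regime. A priori a zero could lie close to $+\lambda$ or to $-\lambda$ on the real axis, making a single term as large as $\asymp\lambda^2/\delta$; combined with the $\langle\lambda\rangle^{1+\epsilon}$ count this would be far weaker than \eqref{unifbound}. This is precisely where the spectral gap \Cref{Fprop}\eqref{F:prop3} is indispensable: it prevents zeros from approaching the real axis and forces $\min(|\lambda_j-\lambda|,|\lambda_j+\lambda|)\ge\delta$, which — combined with $|\lambda_j|\gtrsim\lambda$ absorbing one factor of $\lambda$ and $\max(|\lambda_j-\lambda|,|\lambda_j+\lambda|)\gtrsim\lambda$ absorbing the other — makes each individual term $O_\delta(1)$. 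No localized (non-origin-centered) zero-counting estimate is then needed; the crude combination of the global count \eqref{F:prop4} with the uniform gap suffices.
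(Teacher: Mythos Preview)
Your proof is correct and follows essentially the same strategy as the paper's: split the roots into the far regime $|\lambda_j|\ge 2|\lambda|$ (where the summand is $O(\lambda^2/|\lambda_j|^3)$ and one sums using Property~\eqref{F:prop4}) and the near regime $|\lambda_j|<2|\lambda|$ (where each term is $O_\delta(1)$ via the gap $|\operatorname{Im}\lambda_j|\ge\delta$ from Property~\eqref{F:prop3}, and one multiplies by the $\langle\lambda\rangle^{1+\epsilon}$ count). The minor cosmetic differences---your explicit closed form $2\lambda^2/(\lambda_j(\lambda_j^2-\lambda^2))$, the dyadic-shell summation in the far regime (yielding $O(\lambda^\epsilon)$ rather than the paper's $O(\lambda)$), and the separate handling of the finitely many imaginary-axis roots---do not amount to a genuinely different route.
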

The above lemma asserts, in particular that for each $\lambda$, $\mathcal{G}_F^j$ is absolutely summable with a sub-polynomial upper bound in $\lambda\in\mathbb{R}$. We note importantly that this bound holds for real $\lambda$. 
\begin{proof}
Without loss of generality, we assume $\lambda_j$ are indexed in ascending order of magnitude. 
That is,
\begin{equation*}
|\lambda_1|\leq |\lambda_2|\leq...\leq|\lambda_k|\leq\cdots.
\end{equation*}
Since $F$ is entire and $\lambda_j\neq 0$, we have the lower bound 
\begin{equation}\label{lowerlambdabound}
\inf_{j\in\mathbb{N}}|\lambda_j|>c_0
\end{equation}
for some $c_0>0$ depending on the profile of $F$. 
Moreover by Property \ref{F:prop4} in \Cref{Fprop}, we have
\begin{equation}\label{lambdajbound}
|\lambda_j|\geq Cj^{1-\epsilon}
\end{equation}
where $C>0$ depends on $\epsilon$.\footnote{
Indeed, if \eqref{lambdajbound} were not true, then for every $\tilde C >0$, there would exist a subsequence $j_k \to \infty$ such that $\abs{\lambda_{j_k}}  < \tilde C (j_k )^{1-\e}$.
But because $\lambda_j$ are ordered by absolute value, we have that $j_k  \le \# \set{|\lambda_{j}| \le (j_k)^{1-\e} \tilde C }\le C(1 + ((j_k)^{1-\e} \tilde C)^{1+\e})$.
This implies that $j_k \le C (1 + (j_k)^{1-\e^2} \tilde C ^{1+\e})$ which cannot hold as $k \to \infty$.
} 
As above, in the sequel, we will adopt the convention that $C>0$ denotes a generic constant (which can change from line to line) depending on $\epsilon$ and on $c_0$, but is otherwise universal. 
Now we consider separately the regions $|\lambda_j|> 2|\lambda|$ and $|\lambda_j|\leq 2|\lambda|$. When $|\lambda_j|\geq 2|\lambda|$, we have by straightforward algebraic manipulation and \eqref{lambdajbound}, 
\begin{equation*}
\begin{split}
\abs{\frac{1}{\lambda_j+\lambda}+\frac{1}{\lambda_j-\lambda}-\frac{2}{\lambda_j}}&=\abs{\frac{\lambda}{\lambda_j(\lambda_j-\lambda)}-\frac{\lambda}{\lambda_j(\lambda_j+\lambda)}}
\\
&\leq \abs{\frac{\lambda}{\lambda_j(\lambda_j-\lambda)}}+\abs{\frac{\lambda}{\lambda_j(\lambda_j+\lambda)}}
\\
&\leq C\frac{|\lambda|}{j^{2(1-\epsilon)}}
\end{split}
\end{equation*}
Above, in the last estimate, we used that $|\lambda_j+\lambda|\approx |\lambda_j-\lambda|\approx |\lambda_j|$. Hence,
\begin{equation*}
\sum_{|\lambda_j|\geq 2|\lambda|}|\mathcal{G}_F^j(\lambda)|\leq C|\lambda|.
\end{equation*}
In the region $|\lambda_j|\leq 2|\lambda|$, we may assume $|\lambda|\geq c_0 / 2$, otherwise this region is empty and we are done. We begin by showing the bound
\begin{equation}\label{summandbound}
\frac{1}{|\lambda_j\pm\lambda|}+\frac{1}{|\lambda_j|}\leq C.
\end{equation}
We already have the required bound for $\abs{\lambda_j}^{-1}$ in view of (\ref{lowerlambdabound}). To estimate $|\lambda_j\pm\lambda|$, we have for some $\delta>0$ (in view of the fact that $\lambda$ is real),
\begin{equation*}
|\lambda_j\pm\lambda|\geq |\operatorname{Im}(\lambda_j)|\geq \delta.
\end{equation*}
In the second inequality, we used Property \ref{F:prop3} of \Cref{Fprop}. Therefore, we have \eqref{summandbound}, and thus (in light of Property \ref{F:prop4} of \Cref{Fprop}), we have
\begin{equation*}
\sum_{|\lambda_j|\leq 2|\lambda|}|\mathcal{G}_F^j(\lambda)|\leq C\#\set{j: |\lambda_j|\leq 2|\lambda|}\leq C(1+|\lambda|^{1+\epsilon})\leq C\langle\lambda\rangle^{1+\epsilon}.
\end{equation*}
Combining everything yields (\ref{unifbound}), as desired.
\end{proof}
Now, we return to our analysis of \eqref{eq:619}. Using \Cref{summabilityestimate}, that $\hat{\varphi}$ is Schwartz, $\varphi(0)=0$, and Fubini's Theorem, we compute
\begin{equation*}
\begin{split}
\frac{1}{4\pi i } \int_{-\infty}^\infty \hat{\varphi}(\lambda)\mathcal{G}_F(\lambda)\dd \lambda&=\frac{1}{4\pi i}\sum_{j=1}^{\infty}\int_{-\infty}^{\infty}\hat{\varphi}(\lambda)\mathcal{G}_F^j(\lambda)\dd \lambda.
\end{split}
\end{equation*}
We note that the term $-2a$ from \eqref{AFdef} does not contribute to the sum as $\int_{-\infty}^\infty \hat \phi (\lambda ) \dd \lambda = 0$.
It remains to compute each of the above summands. For this, we will carry out a simple contour deformation. We let $R_n\to\infty$ be a sequence to be chosen. Clearly we have for each $j$
\begin{equation*}
\int_{-\infty}^{\infty}\hat{\varphi}(\lambda)\mathcal{G}_F^j(\lambda)\dd \lambda=\int_{-R_n}^{R_n}\hat{\varphi}(\lambda)\mathcal{G}_F^j(\lambda)\dd \lambda+O(R_n^{-\infty}).
\end{equation*}
 Let $\gamma_1^n$ be the line from $-R_n$ to $-R_n-iR_n$, $\gamma_2^n$ be the line from $-R_n-iR_n$ to $R_n-iR_n$, and $\gamma_3^n$ be the line from $R_n-iR_n$ to $R_n$. We denote by $\gamma^n$, the union of these three lines oriented clockwise and let $D_n$ denote the rectangle enclosed by $\gamma^n$ and the real axis. Choosing $R_n$ such that the contour does not intersect any of the poles of $\mathcal{G}_F$, we have by the residue theorem,
\begin{equation}\label{contourdef2}
\int_{-R_n}^{R_n}\hat{\varphi}(\lambda)\mathcal{G}_F^j(\lambda)\dd\lambda=-\int_{\gamma^n}\hat{\varphi}(\lambda)\mathcal{G}_F^j(\lambda)\dd \lambda-2\pi i \1_{ D_n}(\lambda_j) \text{Res}(\hat{\varphi}(\lambda) \mathcal{G}_F^j(\lambda),\lambda_j).
\end{equation}
Here $\1_{D_n}(\lambda_j)$ is $1$ if $\lambda_j \in D_n$ and $0$ otherwise.
By Property \ref{F:prop4} of \Cref{Fprop} and pigeonholing, we can further arrange for the sequence $R_n$ to satisfying the following properties:
\begin{enumerate}
\item (Approximate unit spacing). The sequence $R_n$ satisfies
\begin{equation}\label{Rnapprox}
|R_n-n|<1.
\end{equation}
\item (Quantitative avoidance of poles) For $\lambda \in \gamma^n$, we have
\begin{equation}\label{lowerbound}
\inf_{j}|\lambda_j\pm\lambda|\geq Cn^{-2}.
\end{equation}
\end{enumerate}
\begin{rem}
$n^{-2}$ is not optimal but will suffice for our purposes.
\end{rem}
It remains to estimate the contributions of $\gamma_i^n$ for each $i=1,2,3$. First, for $\lambda$ in $\gamma_1^n$ or $\gamma_3^n$, we compute from \eqref{lowerbound} and the fact that the length of $\gamma^n$ is on the order of $n$,
\begin{equation*}
\int_{\gamma^n}|\hat{\varphi}(\lambda)||\mathcal{G}_F^j(\lambda)|\dd \lambda\leq Cn^3\sup_{\lambda\in \gamma^n}|\hat{\varphi}(\lambda)|.
\end{equation*}
In view of the support properties of $\varphi$ and that $\lambda$ is in the lower half plane, we have for $\lambda \in \gamma _1^n\cup \gamma_3^n$ 
\begin{equation*}
n^4|\hat{\varphi}(\lambda)|\leq C|\lambda|^4|\hat{\varphi}(\lambda)|\leq C\int_{0}^{\infty}|\varphi^{(4)}(t)|e^{t\operatorname{Im}(\lambda)}\dd t\leq C\|\varphi^{(4)}\|_{L^1(\mathbb{R})}.
\end{equation*} 
For $\lambda$ on the contour $\gamma_2^n$, we use the fact that $\Im \lambda =-R_n$ to obtain
\begin{equation*}
|\hat{\varphi}(\lambda)|\leq Ce^{-R_n}\|\varphi\|_{L^1(\mathbb{R})}.
\end{equation*}
Combining the above, we obtain
\begin{equation}\label{vanishingcontour}
\int_{\gamma^n}\hat{\varphi}(\lambda)\mathcal{G}_F^j(\lambda)\dd\lambda\to 0,\hspace{5mm}\text{as}\hspace{2mm}n\to\infty.
\end{equation}
It remains to compute the sum of the second term on the right-hand side of \eqref{contourdef2} over $j$. Indeed, we have
\begin{align}
    \sum_{j=1}^\infty \int_{-\infty}^\infty \hat \phi (\lambda ) \mathcal{G}_F^j (\lambda ) \dd \lambda &= \sum_{j=1}^\infty \lim_{n\to \infty} \int_{-R_n}^{R_n} \hat \phi (\lambda) \mathcal G_F^j (\lambda ) \dd \lambda\\
&= -2\pi i \sum_{j=1}^\infty\lim_{n\to\infty} \1_{D_n} (\lambda_j)\mathrm{Res}(\hat{\varphi}(\lambda) \mathcal{G}_F^j(\lambda),\lambda_j)\\
&= -2\pi i  \left( \sum_{\Im{\lambda_j}>0}\hat \phi (-\lambda _j)- \sum_{\Im{\lambda _j} <0} \hat \phi (\lambda _j) \right)\label{eq:1119}
\end{align}

From the definition of $f$ and the fact that the $\lambda_j$ with positive imaginary part satisfy $\operatorname{Re}(\lambda_j)=0$ (by \Cref{Fprop} Property \eqref{F:prop1}), we can further write
\begin{equation}
\sum_{\operatorname{Im(\lambda_j)}>0}\hat{\varphi}(-\lambda_j)=2\sum_{\Im {\lambda_j } > 0}f(\lambda_j^2)-\sum_{\operatorname{Im}(\lambda_j)>0}\hat{\varphi}(\lambda_j) \label{eq:1130}
\end{equation}
Let $\tilde E_j = \lambda_j^2$ for $\Im{\lambda_j}>0$. 
By \Cref{Fprop}, there are finitely many $\tilde E_j$ (whose cardinality we denote by $k'$) which are negative real numbers (we will later show they are the negative eigenvalues of $P_V$).
We can now rewrite \eqref{eq:1119} using \eqref{eq:1130} as
\begin{equation*}
\sum_{j=1}^\infty \int_{-\infty}^\infty \hat \phi (\lambda ) \mathcal{G}_F^j (\lambda ) \dd \lambda =-2\pi i\left( 2 \sum_{j=1}^{k'} f(\tilde E_j) - \sum_{j=1}^\infty \hat \phi (\lambda _j) \right)
\end{equation*}

Thanks to \eqref{contourdef2}, we can insert this into \eqref{BK1} to obtain 
\begin{align}
   \begin{split}
        \tr(f(P_V) - f(P_0)) =&\frac{1}{2}\sum_{j=1}^{\infty}\hat{\varphi}(\lambda_j)+\frac{1}{4\pi i } \int_{-\infty}^\infty \hat{\varphi}(\lambda)(\mathcal{G}_{\Gamma}^+(\lambda)+\mathcal{G}_{\Gamma}^-(\lambda))\dd \lambda \\
        &+ \frac{1}{2} (m_R(0) - 1) f(0) +    \sum_{j=1}^k f(E_j) - \sum_{j=1}^{k'} f(\tilde E_j).
   \end{split}  \label{BK2}
\end{align}
\
It remains to compute the contributions of $\mathcal{G}_{\Gamma}^+$ and $\mathcal{G}_{\Gamma}^-$. We recall from \eqref{eqa2} the formula,
 {
\begin{equation*}
\begin{split}
 \mathcal{G}_\Gamma^\pm(\lambda) = \frac{2iA_\pm^{-1} \Gamma_{\Lambda_\pm}'(1 + \alpha_\pm)}{\Gamma_{\Lambda_\pm}(1 + \alpha_\pm)} + \frac{2iA_\pm^{-1}\Gamma_{\Lambda_\pm}'(1-\alpha_\pm))}{\Gamma_{\Lambda_\pm}(1-\alpha_\pm)}
\end{split}
\end{equation*}
}
where $\alpha_\pm=2i\lambda A^{-1}_\pm$. The main ingredient here  {is a simple analogue of} the the following well-known formula for the digamma function,
\begin{align}
    \Psi(z) \coloneq \frac{\Gamma'(z)}{\Gamma(z)} = -\gamma + \sum_{j=0}^\infty \left ( \frac{1}{j+1} - \frac{1}{j+z} \right),
\end{align}
where $\gamma$ is the Euler–Mascheroni constant.  {A simple modification of the above formula shows that the logarithmic derivative of our modified Gamma functions $\Gamma_{\Lambda_\pm}$ is given by 
\begin{equation*}
\Psi_{\Lambda_\pm}(z)\coloneq \frac{\Gamma_{\Lambda_\pm}'(z)}{\Gamma_{\Lambda_\pm}(z)}=-\gamma+\sum_{j\in \Z_{\ge0}\setminus{\tilde{\Lambda}_{\pm}}}\left(\frac{1}{j+1}-\frac{1}{j+z}\right)    
\end{equation*}
where $\tilde{\Lambda}_{\pm}\coloneq \Lambda_\pm-1\subset \Z_{\ge 1}$.
Using this expansion, one can carry out a contour deformation argument as in the analysis of the term $\mathcal{G}_F(\lambda)$ to write  
\begin{align}
    &\frac{1}{2\pi i}\int_0^\infty f(\lambda^2 ) (\mathcal G_\Gamma ^+ (\lambda)+\mathcal{G}_\Gamma ^-(\lambda)) \dd \lambda \\&= \tfrac{1}{4\pi i }\int_{-\infty}^\infty \hat{\varphi}(\lambda)(\mathcal{G}_{\Gamma}^+(\lambda)+\mathcal{G}_{\Gamma}^-(\lambda))\dd \lambda\\
    &=-\tfrac{1}{2}\sum_{j\in\Z_{\ge 0}\setminus{\tilde{\Lambda}_+}}\hat{\varphi}\left(-iA_+\frac{j+1}{2}\right)-\tfrac{1}{2}\sum_{j\in\mathbb{Z}_{\ge 0}\setminus{\tilde{\Lambda}_-}}\hat{\varphi}\left(-iA_-\frac{j+1}{2}\right).
\end{align}
Writing out the definition of $\hat{\varphi}$ and using Fubini, we can rewrite this as
\begin{equation*}
\int_{-\infty}^{\infty}\varphi(t)\left(\mathcal{A}_+(t)+\mathcal{A}_-(t)\right)\dd t
\end{equation*}
where
\begin{equation*}
\mathcal{A}_\pm(t)\coloneq -\tfrac{1}{2}\sum_{j\in\mathbb{N}_0\setminus{\tilde{\Lambda}_\pm}}e^{-tA_\pm\frac{(j+1)}{2}}=-\tfrac{1}{2}\sum_{j\in\mathbb{Z}_{\geq 1}\setminus{\Lambda_\pm}}e^{-tA_\pm\frac{j}{2}}.    
\end{equation*}
which matches the general form of $\mathcal{A}_\pm$ in equation \eqref{eq:450}. We also note that if $\Lambda_+$ and $\Lambda_-$ are both empty, or in other words, $V\notin\mathcal{R}\mathcal{C}$, then we can write the more explicit formula for $\mathcal{A}_\pm(t)$,
\begin{align}
\mathcal{A}_{\pm}(t)&=-\tfrac{1}{2}\sum_{j=0}^\infty e^{-tA_\pm/2 } e^{-tA_\pm j/2}
=-\tfrac{1}{2}\frac{ e^{-tA_\pm/2 }}{ 1 - e^{-tA_\pm/2}} =-\tfrac{1}{2}\frac{ 1}{e^{tA_\pm /2}-1} 
\end{align}
which agrees with equation \eqref{eq:209}. In general, combining the above with \eqref{BK2} and \eqref{traceapp} and using that $\varphi$ is supported on $(0,\infty)$, we obtain
\begin{equation}\label{finaltrace}
\begin{split}
 \Tr(U_V(t) - U_0(t)) &=\tfrac{1}{2}\sum_{j=1}^{\infty}e^{-i\lambda_jt}+\mathcal{A}_+(t)+\mathcal{A}_-(t)\\
        &+ \tfrac{1}{2} (m_R(0) - 1) + \sum_{j=1}^k f(E_j) - \sum_{j=1}^{k'} f(\tilde E_j)
\end{split}
\end{equation}
}
in the sense of distributions. To conclude the proof of \label{eq:507}, we need to show that the $\lambda_j$ appearing in the first term on the right-hand side correspond to non-zero resonances of $V$ and that the $\tilde{E}_j$ correspond to the negative eigenvalues for $V$.
This is the content of the following lemma, which is proved in Appendix \ref{s:proof of lemmainduct}.
\begin{lemma}\label{lemma:inductthing}
We have that $\lambda_0$ is a pole of $R_V(\lambda)$ with multiplicity $ m $ if and only if $\lambda_0$ is an order $m$ zero of $F(\lambda) = W[u_+(\cdot,\lambda),u_-(\cdot ,\lambda)]$. 
Moreover, if $0$ is a resonance (see \Cref{def:multi}) of $-\p_x^2 + V(x)$, then it is simple.
\end{lemma}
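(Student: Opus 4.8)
The plan is to extract the pole structure of $R_V(\lambda)$ directly from the explicit resolvent kernel \eqref{eq:535}, which we write as $R(x,y,\lambda)=N(x,y,\lambda)/F(\lambda)$ with numerator $N(x,y,\lambda)=u_+(x,\lambda)u_-(y,\lambda)H(x-y)+u_-(x,\lambda)u_+(y,\lambda)H(y-x)$ entire in $\lambda$ and $F=W[u_+,u_-]$. By \Cref{def:multi}, $m_R(\lambda_0)$ is the rank of the residue of $R_V$ at $\lambda_0$ in the variable $\lambda^2$, and a standard argument from $(P_V-\lambda^2)R_V(\lambda)=\Id$ identifies the range of that operator with the space $\mathcal R(\lambda_0)$ of generalized resonant states at $\lambda_0$ --- the functions killed by some power of $P_V-\lambda_0^2$ that are of ``generalized outgoing'' form at both ends. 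Since $F(\lambda_0)=0$ forces $u_+(\cdot,\lambda_0)=c_0\,u_-(\cdot,\lambda_0)$ with $c_0\neq0$ (both being nontrivial solutions), the leading Laurent coefficient of $R$ has kernel proportional to $N(\cdot,\cdot,\lambda_0)=c_0\,u_-(x,\lambda_0)u_-(y,\lambda_0)$, which is rank one and not identically zero, confirming that $R$ has a genuine pole there; everything then reduces to showing $\dim\mathcal R(\lambda_0)$ equals the order of vanishing $m$ of $F$ at $\lambda_0$.

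For $\lambda_0\neq0$ the engine is the $\lambda$-Taylor expansion of the outgoing solutions. Since $v_\pm(w,\lambda)$ is jointly holomorphic in $(w,\lambda)$ on a fixed disc $\{|w|<\delta\}$ by \Cref{prop:existenceofoutgoing}, write $u_\pm(\cdot,\lambda)=\sum_{j\geq0}(\lambda-\lambda_0)^j\phi_j^\pm$; matching powers of $\lambda-\lambda_0$ in $(P_V-\lambda^2)u_\pm=0$ gives the Jordan-chain relations $(P_V-\lambda_0^2)\phi_k^\pm=2\lambda_0\phi_{k-1}^\pm+\phi_{k-2}^\pm$, which iterate to $(P_V-\lambda_0^2)^k\phi_k^\pm=(2\lambda_0)^k\phi_0^\pm\neq0$ (here $\lambda_0\neq0$ is essential) while $(P_V-\lambda_0^2)^k\phi_j^\pm=0$ for $j<k$, so $\{\phi_j^-\}_j$ and $\{\phi_j^+\}_j$ are each linearly independent. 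Each $\phi_j^-$ is a polynomial in $x$ of degree $\leq j$ times $e^{-i\lambda_0x}$ times a function of $e^{A_-x}$ holomorphic near $0$ --- already a generalized outgoing state at $-\infty$, and symmetrically $\phi_j^+$ at $+\infty$. The heart of the matter --- the induction carried out in \Cref{s:proof of lemmainduct} --- is to show by induction on $k=0,\dots,m-1$ that $\phi_k^-$ is \emph{also} generalized outgoing at $+\infty$: the coefficient of $(\lambda-\lambda_0)^k$ in $F=W[u_+,u_-]=\sum_k(\lambda-\lambda_0)^k\sum_{i+j=k}W[\phi_i^+,\phi_j^-]$ vanishes for $k<m$, and feeding the inductive hypothesis ($\phi_0^-,\dots,\phi_{k-1}^-\in\mathrm{span}\{\phi_0^+,\dots,\phi_{k-1}^+\}$) together with the chain relations into this identity yields a homogeneous solution with vanishing Wronskian against $\phi_0^-$, hence a multiple of $\phi_0^-$, hence $\phi_k^-\in\mathrm{span}\{\phi_0^+,\dots,\phi_k^+\}$. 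This gives $\dim\mathcal R(\lambda_0)\geq m$. For the reverse bound one checks $\mathcal R(\lambda_0)\subseteq\mathrm{span}\{\phi_j^-:j\geq0\}$ (a generalized resonant state is generalized outgoing at $-\infty$, hence lies in the maximal ``left chain,'' which equals this span since $P_V-\lambda_0^2$ maps $\mathrm{span}\{\phi_0^-,\dots,\phi_J^-\}$ onto $\mathrm{span}\{\phi_0^-,\dots,\phi_{J-1}^-\}$), and then that $\phi_m^-$ cannot appear, because $\phi_m^-$ is \emph{not} generalized outgoing at $+\infty$ once $F$ vanishes to order exactly $m$; hence $\dim\mathcal R(\lambda_0)=m=m_R(\lambda_0)$.

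The case $\lambda_0=0$ must be handled separately and is, I expect, the main obstacle. There $\lambda\mapsto\lambda^2$ is two-to-one, the chain relation degenerates to $P_V\phi_k=\phi_{k-2}$ (so $\phi_0$ and $\phi_1$ both solve $P_V u=0$), and, crucially, the incoming and outgoing profiles at each end coincide at $\lambda=0$ because the profile equation acquires a double indicial root, so the decompositions used above for $\lambda_0\neq0$ are no longer available and the bookkeeping collapses. The plan is instead to argue directly that $m_R(0)\leq1$: the leading singular part of $R_V$ at $0$ is the rank-one kernel proportional to $u_-(x,0)u_-(y,0)$, the space of solutions of $P_V u=0$ bounded at both ends is at most one-dimensional, and a short computation with \Cref{def:multi} at $\lambda=0$ forces the zero resonance, when present, to have multiplicity exactly one. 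Controlling the higher Laurent coefficients of $R_V$ at $0$ so that they do not enlarge this rank is the genuinely delicate point.
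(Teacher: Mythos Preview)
Your plan for $\lambda_0\neq0$ is essentially the paper's: the Taylor coefficients $\phi_j^\pm$ of $u_\pm$, the Jordan--chain relations $(P_V-\lambda_0^2)\phi_k=2\lambda_0\phi_{k-1}+\phi_{k-2}$, and the induction showing $\phi_k^+\in\mathrm{span}\{\phi_0^-,\dots,\phi_k^-\}$ for $k<m$ from the vanishing of the first $m$ Taylor coefficients of $W[u_+,u_-]$ are exactly the content of \Cref{ordwronsk}. The paper then works directly with the Laurent coefficients $B_j$ of the resolvent rather than through an abstract space $\mathcal R(\lambda_0)$ of generalized resonant states, identifying the span of $\bigcup_j B_j(L^2_c)$ with $\mathrm{span}\{\partial_\lambda^j u_-|_{\lambda_0}\}_{j=0}^{m-1}$ via two short lemmas; but the substance is the same. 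Two small corrections: (i) the multiplicity in \Cref{def:multi} is defined through the Laurent expansion in $\lambda$, not in $\lambda^2$, so your ``rank of the residue in the variable $\lambda^2$'' is not quite the definition in play; (ii) the converse direction is much simpler than you make it --- the paper just reads off from the kernel formula $R=N/F$ that the \emph{order} of the pole of $R_V$ equals the order of the zero of $F$, and then invokes the forward direction to conclude order $=$ multiplicity.

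The real gap is $\lambda_0=0$, which you flag honestly but do not resolve. Your proposed direct attack on the higher Laurent coefficients is unlikely to succeed as written: at $\lambda_0=0$ the chain relation degenerates to $P_V\phi_k=\phi_{k-2}$, so $P_V\phi_1=0$ and the nilpotency/independence mechanism that drives the $\lambda_0\neq0$ case collapses (this is the same obstruction that makes the paper's own linear-independence lemma fail at $0$). The paper sidesteps all of this with a single external input: by \cite[Theorem~2.1]{MKlaus_1988} the transmission coefficient satisfies $T(0)\neq0$ for this class of potentials. Since \eqref{eq:598} reads $T(\lambda)=2i\lambda/\bigl(\Gamma(1-\alpha_+)\Gamma(1-\alpha_-)F(\lambda)\bigr)$ with both Gamma factors equal to $1$ at $\lambda=0$, the finiteness and nonvanishing of $T(0)$ force $F$ to have a zero of order exactly one whenever $F(0)=0$. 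So simplicity at zero is a fact from low-energy scattering theory rather than resolvent-kernel combinatorics; this is the missing idea in your proposal.
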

With this, we see that $\tilde E_j$ are the (negative) eigenvalues (corresponding to the poles of $R_V(\lambda)$ in the upper half plane) of $P_V$ so the sum of the last two sums in \eqref{finaltrace} are zero, and 
\begin{align}
    \tfrac{1}{2} \sum_{j=0}^\infty e^{-i\lambda _j t} + \tfrac{1}{2} (m_R(0) -1) = \tfrac{1}{2} \sum_{\lambda_j \in \Res(V)} e^{-i\lambda _j t} - \tfrac{1}{2}
\end{align}
which establishes the trace formula \eqref{eq:218}. It remains to finally prove \Cref{Fprop}, which we originally postponed. 

\begin{proof}[Proof of \Cref{Fprop}]\phantom{ }
\noindent Properties \ref{F:prop1} and \ref{F:prop2} are standard results in scattering theory for real-valued potentials (see, for instance, \cite[\S 2.2]{dyatlov2019mathematical}).
Indeed, the roots of $F$ correspond to the resonances of $P_V$ by \Cref{lemma:inductthing}. 
Because the spectrum of $P_V$ is contained in the positive real axis with possibly finitely many negative eigenvalues, the resolvent $(P_V -\lambda^2)$ is a meromorphic operator for $\Im \lambda >0$ on $L^2$ with finitely many poles along the imaginary axis corresponding to negative eigenvalues of $P_V$ (this proves \ref{F:prop1}).
Property \ref{F:prop2} follows by taking the complex conjugate of outgoing solutions to $P_V -\lambda ^2 $.

 We next prove Property \ref{F:prop3}. 
 In view of Property \ref{F:prop1}, it suffices to show that there is some $\delta>0$ such that $F$ does not vanish anywhere on $S_{M,\delta}\coloneq S_{\delta}\cap \{\lambda: |\lambda|\geq M\}$ where $\delta>0$ and $M>0$ are some positive constants to be chosen. Moreover, thanks to \Cref{lemma:inductthing}, it is sufficient to show that $S_{M,\delta}$ is resonance-free. In view of this, we need to show that for every $\rho\in C_c^{\infty}(\mathbb{R})$ and $\lambda\in S_{M,\delta}$, there holds
\begin{equation}\label{L2L2bound}
\|\rho R_V\rho\|_{L^2\to L^2}<\infty.
\end{equation}

To set the stage, we use the standard formula for the resolvent, see for instance the discussion before \cite[Lemma 3.1]{froese1997asymptotic}. 
For that we define
\[  V^{\frac12} \coloneq \text{sgn}( V ) |V|^{\frac12}, \ \ \ \  \mathbf R_V ( \lambda ) \coloneq V^{\frac12} R_0 ( \lambda ) 
|V|^{\frac12}  \]
(note that $ V^{\frac12} | V|^{\frac12} = V $).
Multiplying the identity $R_0(\lambda) = R_V(\lambda) +R_0(\lambda )VR_V(\lambda)$ from the left and the right by $V^{1/2}$ and $\abs{V}^{1/2}$ respectively, we then have (omitting $\lambda$ for improved readability)
\[   (\text{Id} + \mathbf  R_V ) V^{\frac12} R_V |V|^{\frac12} = \mathbf R_V  \ \Longrightarrow \
V^{\frac12} R_V |V|^{\frac12} =  (\text{Id} + \mathbf  R_V )^{-1} \mathbf R_V , \]
provided $\text{Id} + \mathbf  R_V$ is invertible. Inserting $ R_V = R_0 - R_V V R_0 $ into $ R_V = R_0 - R_0 V R_V $ we obtain
\[ R_V = R_0 - R_0 V R_0 + R_0 |V|^{\frac12}  V^{\frac12} R_V |V|^{\frac12} V^{\frac12} R_0 , \]
and hence, if $\text{Id} + \mathbf  R_V$ is invertible, we obtain the formal identity. 
\begin{equation}\label{eq:resol}
R_V = R_0 - R_0 V R_0 + 
R_0 |V|^{\frac12}   (\text{Id} + \mathbf  R_V )^{-1} \mathbf R_V  V^{\frac12} R_0 . 
\end{equation}

To establish \eqref{L2L2bound}, we need the following simple lemma, which will allow us to invert $\text{Id}+ \mathbf R_V$ on $S_{M,\delta}$ (and thus make use of \eqref{eq:resol}) using the Neumann series.
\begin{lemma}\label{inversionlemma}
There exist parameters $\delta,M>0$ such that for every $\lambda\in S_{M,\delta}$, there holds
\begin{equation}\label{Rv12bound}
\|\mathbf{R}_V\|_{L^2\to L^2}\leq \tfrac{1}{2}.
\end{equation}
\end{lemma}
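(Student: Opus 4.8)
The plan is to prove \eqref{Rv12bound} directly from the Schwartz kernel of $\mathbf{R}_V(\lambda)$, using the Schur test, with the smallness supplied by the factor $|\lambda|^{-1}$ coming from the free resolvent. Recall that $R_0(\lambda) = (D_x^2 - \lambda^2)^{-1}$ on the line has the explicit kernel $R_0(\lambda;x,y) = \tfrac{i}{2\lambda} e^{i\lambda|x-y|}$, which is holomorphic in $\lambda \in \C\setminus\{0\}$. Hence the kernel of $\mathbf{R}_V(\lambda) = V^{\frac12} R_0(\lambda) |V|^{\frac12}$ is
\[
K_\lambda(x,y) = \frac{i}{2\lambda}\, V^{\frac12}(x)\, e^{i\lambda|x-y|}\, |V(y)|^{\frac12}.
\]
For $\lambda \in S_\delta$ we have $|e^{i\lambda|x-y|}| = e^{-\Im{\lambda}\,|x-y|} \le e^{\delta|x-y|}$, so, setting $\Phi_\delta(x,y) \coloneq |V(x)|^{\frac12} e^{\delta|x-y|} |V(y)|^{\frac12}$, we obtain the pointwise bound $|K_\lambda(x,y)| \le (2|\lambda|)^{-1}\Phi_\delta(x,y) \le (2M)^{-1}\Phi_\delta(x,y)$ for $\lambda \in S_{M,\delta}$. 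Note that $\Phi_\delta$ is symmetric in $(x,y)$ and independent of $\lambda$.

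I would then apply the Schur test to $\Phi_\delta$: since $\Phi_\delta$ is symmetric, it suffices to bound $\sup_{x\in\R}\int_\R\Phi_\delta(x,y)\,\dd y$. Using $e^{\delta|x-y|}\le e^{\delta|x|}e^{\delta|y|}$,
\[
\int_\R \Phi_\delta(x,y)\,\dd y \;\le\; |V(x)|^{\frac12}e^{\delta|x|}\int_\R e^{\delta|y|}|V(y)|^{\frac12}\,\dd y .
\]
By Hypothesis \eqref{hyp:1}, $|V(y)| = \mathcal{O}(e^{-A_+ y})$ as $y\to+\infty$ and $|V(y)| = \mathcal{O}(e^{A_- y})$ as $y\to-\infty$; therefore, as soon as $0<\delta<\tfrac12\min(A_+,A_-)$, the integral $C_\delta \coloneq \int_\R e^{\delta|y|}|V(y)|^{\frac12}\,\dd y$ is finite, and the function $x\mapsto |V(x)|^{\frac12}e^{\delta|x|} = \mathcal{O}(e^{-(\frac12 A_\pm-\delta)|x|})$ is bounded by some $\tilde C_\delta<\infty$. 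Hence $\sup_x\int_\R\Phi_\delta(x,y)\,\dd y\le C_\delta\tilde C_\delta$, and the Schur test yields
\[
\|\mathbf{R}_V(\lambda)\|_{L^2\to L^2}\;\le\;\frac{C_\delta\tilde C_\delta}{2M},\qquad \lambda\in S_{M,\delta}.
\]

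To conclude, fix any $\delta\in(0,\tfrac12\min(A_+,A_-))$ and then choose $M\ge C_\delta\tilde C_\delta$, which gives \eqref{Rv12bound}. I do not anticipate a genuine obstacle here: the only structural requirement is that the exponential decay rate $\tfrac12\min(A_+,A_-)$ of $|V|^{\frac12}$ must strictly dominate the growth rate $\delta$ of the free resolvent kernel on the strip $S_\delta$ (this is what both forces $\delta$ to be small and, more fundamentally, makes $\mathbf{R}_V(\lambda)$ a bounded operator on $L^2$ for $\lambda$ in the lower half-plane at all), while the actual smallness of its norm is provided entirely by the prefactor $|\lambda|^{-1}\le M^{-1}$. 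The remaining care is bookkeeping: keeping the Schur bounds uniform in $\lambda$, and observing that the pole of $R_0$ at $\lambda=0$ causes no trouble since $|\lambda|\ge M$ throughout $S_{M,\delta}$.
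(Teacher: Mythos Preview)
Your proof is correct and follows essentially the same approach as the paper: both arguments bound the kernel of $\mathbf{R}_V(\lambda)$ pointwise by $(2|\lambda|)^{-1}|V(x)|^{1/2}e^{\delta|x-y|}|V(y)|^{1/2}$ and apply the Schur test, using the exponential decay of $V$ to absorb the factor $e^{\delta|x-y|}$ for $\delta$ small and then taking $M$ large to achieve the $\tfrac12$ bound. Your write-up is in fact somewhat more explicit about the constants and the choice of $\delta$ than the paper's, but the method is identical.
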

\begin{proof}
From the representation formula for the free resolvent, it suffices to establish the estimate
\begin{equation}\label{youngineq}
|\lambda|^{-1}\sup_{x\in\mathbb{R}}\int_{\mathbb{R}}|V(x)|^{\frac{1}{2}}|V(y)|^{\frac{1}{2}}e^{|\Im\lambda||x-y|}\dd y\leq\tfrac{1}{2},\hspace{5mm}\lambda\in S_{M,\delta}.
\end{equation}
Since $V$ satisfies the sub-exponential bound $|V(x)|\leq Ce^{-\alpha|x|}$ for some $C,\alpha>0$, it follows that by taking $\delta$ small enough, there holds
\begin{equation*}
|\lambda|^{-1}\sup_{x\in\mathbb{R}}\int_{\mathbb{R}}|V(x)|^{\frac{1}{2}}|V(y)|^{\frac{1}{2}}e^{|\Im\lambda||x-y|}\dd y\leq \frac{C_{\alpha}}{M},\hspace{5mm}\lambda\in S_{M,\delta}.
\end{equation*}
The desired bound then follows by taking $M$ sufficiently large.
\end{proof}
By slightly modifying the above argument, we have for every $\rho\in C_c^{\infty}(\mathbb{R})$
\begin{equation}\label{freebounds}
\|\rho R_0 |V|^{\frac{1}{2}}\|_{L^2\to L^2}+\|V^{\frac{1}{2}}R_0\rho\|_{L^2\to L^2}\leq C_{\rho}
\end{equation}
for $\lambda\in S_{M,\delta}$ (where $\delta$ and $M$ are exactly as above, and do not depend on the choice of $\rho$). At this point, \eqref{L2L2bound} follows immediately from the formula \eqref{eq:resol}, \Cref{inversionlemma}, and \eqref{freebounds}. This completes the proof of \eqref{F:prop3}.
\medskip

We finally prove Property \ref{F:prop4}.
The bound is clear for $r\leq 1$ thanks to the fact that $F$ is non-constant and entire. We now assume $r\geq 1$. Thanks to \eqref{eq:boundonvp} and the fact that the Wronskian is constant in $x$, we have the upper bound
\begin{equation*}
|F(\lambda)|\lesssim e^{C|\lambda|\log(1+|\lambda|)}.
\end{equation*}
Moreover, since $F$ is non-constant, we can find $\lambda_0$ in the unit ball such that $F(\lambda_0)\neq 0$. Jensen's formula then yields
\begin{equation*}
 \#\set{z \in \C : F(z) =0 , |z| \le r}\leq C|r|\log(1+|r|)\lesssim_{\epsilon} r^{1+\epsilon},\hspace{5mm}r\geq 1
\end{equation*}
for some constant $C$ depending on $F(\lambda_0)$, but not on $r$.
\end{proof}

\appendix

\section{Resonant Condition Examples}\label{section:res_cond_examples}

Here we prove various examples coming from the resonant condition discussion in \Cref{ss:res_cond}.
We repeat the statements of the examples.

\begin{ex}[Regge--Wheeler potential in $\mathcal{RC}$]
If $\Lambda = 4 - \sqrt 5$ and $m = \frac{\sqrt 5 - 1 }{6}$, then for $\ell = 1$, $V_\ell \in \mathcal{RC}^+_2$ (recalling \eqref{eq:165}).
\end{ex}
\begin{proof}
We begin by explicitly computing the coefficients of the expansion $V_\ell$ near $+\infty$ in the $w = e^{-A_+ x}$ variable.

Let $\rho \coloneq r - r_+$ and $w \coloneq e^{-A_+ x}$. 
Near $r = r_+$, we can write
\begin{align}
    G(r) = - A_+ \rho +\tfrac 1 2 G'' (r_+) \rho^2 + O (\rho^3 )
\end{align}
so that 
\begin{align}
    \frac{1}{G(r)} = \frac{-1}{A_+ \rho} \left (  1 + \frac{G '' (r_+)}{2A_+} \rho + O(\rho^2 )\right)
\end{align}
therefore
\begin{align}
    x (\rho) = -\frac{1}{A_+} \log \rho  - \frac{G''(r_+)}{2A_+^2} \rho + O(\rho^2)+ c_0
\end{align}
where $c_0$ is a constant of integration.
We can then write
\begin{align}
    w &= e^{-A_+x} = \rho e^{G''(r_+)\rho A_+^{-1} / 2+ O (\rho^2)-A_+c_0}\\
    &= \rho(a_0 + a_1 \rho + O(\rho^2)).
\end{align}
Setting $\rho = b_1 w + b_2 w^2 + O(w^3)$, we see that $a_0 b_1 = 1$ and $a_0 b_2 + b_1^2 a_1 = 0$. 
Therefore
\begin{align}
    b_1 &= a_0^{-1} = e^{A_+ c_0}\\
    b_2 &= \frac{-b_1^2 a_1}{a_0}=-e^{3A_+ c_0} \left( e^{-A_+ c_0} \frac{G''(r_+)}{2A_+}\right) = -e^{2A_+ c_0} \frac{G''(r_+)}{2A_+}.
\end{align}
Recall that
\begin{align}
    G(r) r^{-2} (\ell (\ell+1) + rG'(r)) = V_\ell (r) = \sum_1^\infty \tilde V_j  \rho^j
\end{align}
we have
\begin{align}
    G(r) &= -A_+ \rho + \tfrac{1}{2} G'' (r_+) \rho^2 + O(\rho^3),\\
    r^{-2} &= r_+^{-2}  -2 r_+^{-3} \rho + O(\rho^2),\\
    \ell (\ell +1) + rG'(r) &= K_0 + K_1 \rho + O(\rho^2),
\end{align}
where $K_0 \coloneq \ell(\ell+1) - A_+ r_+$ and $K_1 \coloneq -A_+ +r_+G''(r_+)$. 
Therefore
\begin{align}
    \tilde V_1 &= -A_+ r_+^{-2} K_0 ,\\
    \tilde V_2 &= \tfrac{1}{2} G'' (r_+)r_+^{-2} K_0- A_+ (-2r_+^{-3}K_0 + r_+^{-2}K_1). 
\end{align}
But now
\begin{align}
    \sum_1^\infty \tilde V_j \rho^j = \tilde V_1 (b_1 w +b_2 w^2 + O(w^3)) + \tilde V_2 (b_1 ^2 w^2 + O(w^3)) + O(w^3)
\end{align}
so that $    V_1 = \tilde V_1 b_1,  \quad   V_2 = \tilde V_1 b_2 +\tilde V_2 b_1^2$.
We have $V _\ell \in \mathcal{RC}_2^+$ if and only if $V_1\neq 0$ and
\begin{align}
    V_2 =  \frac{V_1^2}{A_+^{2}}.\label{eq:505}
\end{align}
First, we check the non-degeneracy condition $V_1=0$ holds for every $\ell\in\mathbb{Z}_{\geq 0}$. For this we have to show that $\ell(\ell+1)-A_+r_+\neq 0$ whenever $\ell\in\mathbb{Z}_{\geq 0}$. From the fact that $l(l+1)$ is an even integer, this follows immediately from the following elementary lemma
\begin{lemma}\label{nondegenRW}
Under the constraints $m>0$ and $0<\Lambda<\frac{1}{9m^2}$, there holds
\begin{equation*}
0<A_+r_+<2.    
\end{equation*}
\end{lemma}
\begin{proof}
Since $G(r_+)=0$, we have
\begin{equation*}
\Lambda \frac{r_+^2}{3}=1-\frac{2m}{r_+}    
\end{equation*}
Substituting this into the formula
\begin{equation*}
G'(r)=\frac{2m}{r^2}-\frac{2\Lambda r}{3}    
\end{equation*}
gives
\begin{equation*}
A_+r_+=-G'(r_+)r_+=-\frac{2m}{r_+}+2\left(1-\frac{2m}{r_+}\right)=2-\frac{6m}{r_+}. 
\end{equation*}
Therefore, $A_+r_+ < 2$. 
It now suffices to show that $0< 2- 6m/r_+$.
Note that $G(3m)=\frac{1}{3}-3\Lambda m^2>0$, and since $G(r)\to -\infty$ as $r\to \infty$, we must have $r_+>3m$, which proves the lemma.
\end{proof}
It is worth remarking that the potential $V_l$ is also non-degenerate at the other root $r_-$. This just follows from the Taylor expansion $G'(r_-)=A_->0$ and the fact that $A_-r_->0$ (so, $l(l+1)+A_-r_-=0$ has no solutions for $l\in\mathbb{Z}_{\geq 0}$).
\\
 
To force \eqref{eq:505}, we set $\Lambda = -6m +3$ (with $\Lambda \in (0,1/(9m^2))$ so that $r_+=1$.
In this case, we get that $A_+ = 2(1-3m)$ and $G''(r_+) = -2$.
Therefore, letting $L= \ell (\ell +1)$,
\begin{align}
    \frac{V_1^2}{V_2} &= \frac{\tilde V_1 ^2 b_1^2}{\tilde V_1 b_2 + \tilde V_2 b_1^2}= \frac{\tilde V_1^2 }{A_+^{-1}\tilde V_1 +\tilde V_2 } = -\frac{A_+^2 K_0^2 }{2 K_0 +A_+ (-2K_0 +K_1)}\\
    &=-\frac{A_+^2 (L-A_+)^2}{2(L-A_+)+A_+(A_+ - 2 L - 2)}, \label{eq:512}
\end{align}
where we use that $b_2/ b_1^2 = A_+^{-1}$.
Setting \eqref{eq:512} equal to $A_+^2$, and solving for $A_+$, we get
\begin{align}
    A_+ = (1+L)\pm \tfrac 12 \sqrt{2(L^2 + 2L+2)}.\label{eq:517}
\end{align}
Restricting to $\ell = 1$, so $L=2$, and using that $A_+ = 2(1-3m)$, one of the values of \eqref{eq:517} provides a positive value of $m=(\sqrt 5-1)/6$.
It is then easy to verify in this case that $\Lambda = 4 -\sqrt5 \in (0 ,1/(9m^2))$.
\end{proof}

\begin{ex}[P\"oschl--Teller potential in $\mathcal{RC}$]\label{eq:posch_tel_ex}
The P\"oschl--Teller potential \eqref{eq:PTpotential} is not in $\mathcal{RC}$.
However for the  P\"oschl--Teller potential well, we have
\begin{align}
\frac{-m(m+1)}{\cosh^2 (x)} \in \mathcal{RC}_j^{\pm} \iff m\in \Z_{\ge 1} \text{ and }j\ge m+1.
\end{align}
\end{ex}

\begin{proof}
We will show $V\in \mathcal{RC}_j^+$ if and only if $m\in \Z\cap [1,j-1]$, the statement for $\mathcal{RC}_j^-$ is identical.

We can explicitly write the outgoing solutions using hypergeometric functions.
If $u(x) = e^{i\lambda x} v(w)$, with $w = e^{-2x}$, is outgoing at $+\infty$, then we can rewrite $-u'' + V(x) u = \lambda ^2 u$ as
\begin{align}
    w^2 v''(w) + w (1-\alpha)v'(w) + \frac{m(m+1) w}{(1+w)^2} v(w) = 0 \label{eq:1668}
\end{align}
where $\alpha = i\lambda$.
Next let $y = w/ (1+w)$, and set $f(y) = v(w)$, so that \eqref{eq:1668} becomes
\begin{align}
    y(1-y)f'' (y) + (c - (a+b+1)y) f'(y) - ab f(y) = 0
\end{align}
with $a= -m$, $b = m+1$, $c = 1-\alpha$.
This is the Gauss hypergeometric equation, whose solution normalized by $f(0)=1$ is
\begin{align}
    _2F_1(a,b;c;y) = \sum_{k=0}^\infty \frac{(a)_k (b)_k}{(c)_k k!} y^k
\end{align}
Where $(d)_k \coloneq \prod_{j=0}^{k-1} (d +j)$ is the rising Pochhammer symbol (with $(d)_0 \coloneq 1$).
Therefore
\begin{align}
    v(w) &= {}_2F_1\left ( -m,m+1;1-\alpha; \frac{w}{1+w}\right) \\
    &= (1+w)^{-m}{}_2F_1(-m,-m-\alpha ; 1-\alpha ;-w )\label{eq:1684}
\end{align}
where we used Euler's identity:
\begin{align}
    _2 F_1 (a,b;c;z) = (1-z)^{-a} {}_2F_1\left(a,c-b;c;\frac{z}{z-1}\right).
\end{align}
We can then rewrite \eqref{eq:1684} as
\begin{align}
    v(w) &= \left ( \sum_{n=0}^\infty \frac{(m)_n (-w)^n}{n!} \right)\left(\sum_{k=0}^\infty  \frac{(-m)_k (-m-\alpha)_k (-w)^k}{(1-\alpha)_k k!}\right)\\
    &=1 + \sum_{j=1}^\infty w^j  (-1)^j \left(\sum_{k=0}^j \frac{(-m-\alpha)_k(-m)_k(m)_{j-k}}{k!(j-k)!(1-\alpha)_k}\right) 
\end{align}
so that setting (for $j\ge 1$)
\begin{align}
    v_j(\alpha)\coloneq   (-1)^j \left(\sum_{k=0}^j \frac{(-m-\alpha)_k(-m)_k(m)_{j-k}}{k!(j-k)!(1-\alpha)_k}\right)v_0(\alpha) \label{eq:1694}
\end{align}
we get $v(w) = v(w,\alpha)  = \sum_{j=0}^\infty v_j(\alpha) w^j $.

Following Step 2 of the proof of \Cref{prop:existenceofoutgoing}, we write $T_j(\alpha) = 4 j (j-\alpha)$, and use that $T_j (\alpha ) v_j (\alpha) = S_j(\alpha)$ so that \eqref{eq:1694} gives
\begin{align}
    S_j(\alpha) =4j(j-\alpha) (-1)^j \sum_{k=0}^j \frac{(-m)_k (-m-\alpha)_k (m)_{j-k}}{(1-\alpha )_k k! (j-k)!} v_0(\alpha).
\end{align}
Following the proof of \Cref{prop:existenceofoutgoing}, $V\in \mathcal{RC}_j^\pm$ if and only if $\lim_{\alpha\to j} S_j(\alpha) v_0 (\alpha) ^{-1}=0$.
This limit can be computed by observing that 
\begin{align}
    \lim_{\alpha \to j}(j-\alpha) \sum_{k=0}^j \frac{(-m)_k (-m-\alpha)_k (m)_{j-k}}{(1-\alpha )_k k! (j-k)!}&=\lim_{\alpha \to j}(j-\alpha) \frac{(-m)_j (-m-\alpha)_j}{(1-\alpha )_j j! }\\
    &= \frac{(-1)^{j-1}(-m)_j (-m-j )_j}{(j-1)!j!}.
\end{align}
The first equality follows as for $k<j$, the factor $(1-\alpha)_k$ is nonzero at $\alpha=j$,
so these terms are holomorphic at $\alpha=j$ and vanish after
multiplication by $(j-\alpha)$. Thus only the $k=j$ term contributes.
This last expression is zero if and only if $(-m)_j = 0$ which is true if and only $m\in \Z \cap [0,j-1]$ (as $m=0$ gives a zero potential, we omit this value).

\end{proof}

\begin{corr}\label{ex:PT:resonances}
For the potential
\begin{align}
    V_m (x)  = \frac{-m(m+1)}{\cosh^2 (x)}
\end{align}
with $m \in \Z_{\ge 1}$, then $\Res(V_m) =  i (\Z \cap [-m,m])$ (where all poles have multiplicity $1$).
\end{corr}
\begin{proof}
By \cite{cevik2016resonances}, the transmission coefficient is 
\begin{align}
       T(\lambda) = \frac{\Gamma(m+1-i\lambda) \Gamma (-m-i\lambda)}{\Gamma(1-i\lambda)\Gamma(-i\lambda)}.
\end{align}
If $m \in \Z_{\ge 1}$, then this simplifies to
\begin{align}
    T(\lambda) = (-1)^m\prod_{k=1}^m \frac{k-i\lambda}{k+i\lambda}.
\end{align}
which has poles of order $1$ at $\lambda = ik$ for $k= 1,2,\dots, m$ and zeros of order $1$ at $\lambda = -ik$ for $k= 1,2,\dots, m$.

By \eqref{eq:598}
\begin{align}
    W[u_+,u_-](\lambda) = \frac{2i\lambda}{T(\lambda) \Gamma_{\Lambda_+} (1 -i\lambda)\Gamma_{\Lambda_-} (1-i\lambda)}.
\end{align}
By \Cref{eq:posch_tel_ex}, $\Lambda_\pm  = \set{j \in \Z : j \ge m+1}$, so that $\Gamma_{\Lambda_\pm} (1-i\lambda)$ has poles of order $1$ at $\lambda = -ik$ for $k = 1,2,\dots , m$.

Therefore the zeros of $W[u_+,u_-](\lambda)$ are exactly at $\lambda = ik$ for $k=1,2,\dots m$ (by the poles of $T(\lambda)$), $\lambda = 0$, and $\lambda = -ik$ for $k=1,2,\dots, m$ (by a double pole from $\Gamma_{\Lambda_\pm}$ canceling a single zero of $T(\lambda)$).
All these zeros are simple.

Then, by \Cref{lemma:inductthing}, the resonances of $V_m$ coincide (with multiplicity) with the zeros of $W[u_+,u_-](\lambda)$, which proves the corollary.
\end{proof}

\section{Proof of \texorpdfstring{\Cref{lemma:inductthing}}{Lemma 3.3}}\label{s:proof of lemmainduct}

In proving \Cref{Fprop}, we used \Cref{lemma:inductthing}, which states that the poles of $R_V(\lambda)$ coincide (with multiplicity) with the zeros of $F(\lambda) = W[u_+ (\cdot,\lambda), u_-(\cdot,\lambda) ]$.
We define the multiplicity of a resonance in the usual way.
\begin{defi}\label{def:multi}
For a potential $V(x)$ satisfying \eqref{hyp:1}, a resonance is a pole of the meromorphic continuation of the resolvent $R_V(\lambda)$ (recall \Cref{prop:meroresolv}).
If $R_V(\lambda)$ has a pole at $\lambda _0 \in \C$, then we can write
\begin{align}
    R_V(\lambda ) = \sum_{j=1}^J (\lambda - \lambda_0)^{-j}B_j +B_0 (\lambda)
\end{align}
for finite rank operators $B_j$, $B_0(\lambda)$ holomorphic near $\lambda = \lambda_0$, and $J \in \Z_{\ge 1}$ (the order of the pole).
The \textbf{multiplicity of the pole} $\lambda_0$ is the dimension of the space spanned by $B_j (L^2_c (\R))$ for $j=1,\dots ,J$.
\end{defi}

By \eqref{eq:535}, the poles of $R_V(\lambda)$ are exactly the poles of the resolvent function
\begin{align}
    R(x,y,\lambda) = \frac{u_+(x,\lambda) u _-(y,\lambda) H(x-y) + u_-(x,\lambda)u_+(y,\lambda) H(y-x) }{F(\lambda)}\label{eq:810}
\end{align}
where $H(\cdot)$ is the Heaviside function. We now prove \Cref{lemma:inductthing}. We begin with the first part, where we show that $\lambda_0$ is a pole of $R_V(\lambda)$ with multiplicity $m$ if and only if $\lambda_0$ is a zero of order $m$ for $F(\lambda)$. Clearly, from the formula for $R(x,y,\lambda)$, if $F(\lambda_0)\neq 0$, then $\lambda_0$ is not a pole of the resolvent. Hence, it suffices to show that if $\lambda_0\in\mathbb{C}$ is a zero of order $m\geq 1$ for $F$, if and only if $\lambda_0$ is a pole of $R_V(\lambda)$ with multiplicity $m$. We will begin with the forward implication, which is more difficult. In our proof, we will need the following technical lemma relating the spaces spanned by the $\lambda$ derivatives of $u_+$ and $u_-$ at $\lambda=\lambda_0$. 
\begin{lemma}\label{ordwronsk}
If $F(\lambda)$ vanishes to order $m\geq 1$ at $\lambda_0\in\mathbb{C}$, then there exists a polynomial $p_{m-1}(\lambda)$ of degree $m-1$ (depending only on $\lambda)$ such that the function
\begin{equation}\label{rdef}
r_{m-1}(x,\lambda)\coloneq u_+(x,\lambda)-p_{m-1}(\lambda)u_-(x,\lambda)     
\end{equation}
vanishes to order $m$ at $\lambda=\lambda_0$. That is, $r_{m-1}$ satisfies
\begin{equation}\label{vanishingcond}
(\partial_{l}^{\lambda}r_{m-1})_{|\lambda=\lambda_0}=0,\hspace{5mm}0\leq l\leq m-1.    
\end{equation}
In particular, we have
\begin{equation*}
\Span\{(\partial_{\lambda}^ju_+)_{|\lambda=\lambda_0} \}_{j=0}^{m-1}\subset\Span\{(\partial_{\lambda}^ju_-)_{|\lambda=\lambda_0}\}_{j=0}^{m-1}.
\end{equation*}
\end{lemma}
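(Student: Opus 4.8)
The strategy is to construct the Taylor coefficients $q_0,\dots,q_{m-1}$ of
\[ p_{m-1}(\lambda)=\sum_{j=0}^{m-1}q_j(\lambda-\lambda_0)^j \]
one at a time, choosing $q_l$ so as to kill the $(\lambda-\lambda_0)^l$-Taylor coefficient of $r_{m-1}=u_+-p_{m-1}u_-$ at $\lambda_0$. The crucial structural input is that, for \emph{any} scalar holomorphic $p(\lambda)$, bilinearity of the Wronskian together with $W[u_-,u_-]=0$ gives
\[ W[u_+-p(\lambda)u_-,\,u_-](\lambda)=W[u_+,u_-](\lambda)=F(\lambda), \]
which is independent of $x$ and vanishes to order $m$ in $\lambda$ at $\lambda_0$.

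I would argue by induction on $l=0,1,\dots,m-1$. Suppose $q_0,\dots,q_{l-1}$ have been found so that $\tilde r:=u_+-\sum_{j=0}^{l-1}q_j(\lambda-\lambda_0)^j u_-$ satisfies $(\partial_\lambda^i\tilde r)|_{\lambda=\lambda_0}=0$ for $0\le i\le l-1$ (the base case $l=0$ being vacuous, with $\tilde r=u_+$). Applying $\partial_\lambda^l$ at $\lambda_0$ to the identity $W[\tilde r,u_-](\lambda)=F(\lambda)$ and expanding the left side by the Leibniz rule for the $x$-bilinear form $(f,g)\mapsto(\partial_xf)g-f(\partial_xg)$, every term carrying a factor $(\partial_\lambda^i\tilde r)|_{\lambda_0}$ with $i<l$ vanishes identically in $x$; the left side therefore collapses to $W[(\partial_\lambda^l\tilde r)|_{\lambda_0},u_-(\cdot,\lambda_0)]$, which is consequently constant in $x$ and equal to $(\partial_\lambda^lF)(\lambda_0)=0$ since $l\le m-1$. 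Because $u_-(\cdot,\lambda_0)$ is a nontrivial solution of the second order equation $P_{\lambda_0}u=0$, its zeros in $x$ are simple and isolated, so dividing by $u_-(\cdot,\lambda_0)$ on each nodal interval and invoking the smoothness in $x$ of $(\partial_\lambda^l\tilde r)|_{\lambda_0}$ forces $(\partial_\lambda^l\tilde r)|_{\lambda_0}=c_l\,u_-(\cdot,\lambda_0)$ for a single constant $c_l$. Setting $q_l:=c_l/l!$ and $r:=\tilde r-q_l(\lambda-\lambda_0)^l u_-$, a direct Taylor expansion in $\lambda$ shows $(\partial_\lambda^i r)|_{\lambda_0}=0$ for all $0\le i\le l$. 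After the step $l=m-1$ we obtain $p_{m-1}$ of degree $\le m-1$ with $r_{m-1}=u_+-p_{m-1}u_-$ vanishing to order $m$ at $\lambda_0$, i.e.\ \eqref{vanishingcond}. The span inclusion is then immediate: for $j\le m-1$, $(\partial_\lambda^ju_+)|_{\lambda_0}=\partial_\lambda^j(p_{m-1}u_-)|_{\lambda_0}$ is, by Leibniz, a linear combination of $(\partial_\lambda^lu_-)|_{\lambda_0}$ with $0\le l\le j$.

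The main obstacle I anticipate is the passage from ``$W[f,u_-(\cdot,\lambda_0)]\equiv0$ in $x$'' to ``$f=c\,u_-(\cdot,\lambda_0)$'' for $f=(\partial_\lambda^l\tilde r)|_{\lambda_0}$, which is not itself a solution of the ODE: one must verify that the proportionality constant does not jump across the (simple) zeros of $u_-(\cdot,\lambda_0)$, using the $C^1$-regularity of $f$ in $x$ and the fact that a nontrivial solution of $P_{\lambda_0}u=0$ vanishing at a point has nonzero derivative there. A secondary technical point is the non-triviality $u_-(\cdot,\lambda_0)\not\equiv0$, which holds except possibly on the discrete set where the profile value $v_-(0,\lambda)=1/\Gamma(1-2i\lambda A_-^{-1})$ from \Cref{prop:existenceofoutgoing} degenerates, a case to be handled separately. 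The remaining steps are routine bookkeeping with the Leibniz rule.
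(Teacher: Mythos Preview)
Your inductive scheme is exactly the paper's, and the argument is correct as written. The one genuine difference is at the step where you deduce $(\partial_\lambda^l\tilde r)|_{\lambda_0}=c_l\,u_-(\cdot,\lambda_0)$ from the vanishing Wronskian. You treat this as a general fact about $C^1$ functions, worrying that the proportionality constant might jump across zeros of $u_-$ and patching this with the simplicity of those zeros. The paper instead observes that $(\partial_\lambda^l\tilde r)|_{\lambda_0}$ is \emph{itself} a solution of $(P_V-\lambda_0^2)u=0$: since $\tilde r$ is a $\lambda$-dependent linear combination of $u_+$ and $u_-$, one has $(P_V-\lambda^2)\tilde r=0$ identically, and applying $\partial_\lambda^l$ via Leibniz gives
\[
(P_V-\lambda^2)\partial_\lambda^l\tilde r - 2l\lambda\,\partial_\lambda^{l-1}\tilde r - l(l-1)\,\partial_\lambda^{l-2}\tilde r = 0,
\]
whose last two terms vanish at $\lambda_0$ by the inductive hypothesis. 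Thus your assertion that $f=(\partial_\lambda^l\tilde r)|_{\lambda_0}$ ``is not itself a solution of the ODE'' is actually mistaken, and once you note that it is, the linear dependence with $u_-(\cdot,\lambda_0)$ is immediate from the standard fact about two solutions of a second-order linear ODE with vanishing Wronskian. Your nodal-interval argument is valid but unnecessary; recognizing the ODE structure makes the anticipated obstacle disappear. Your secondary concern about the degenerate case $u_-(\cdot,\lambda_0)\equiv 0$ is not addressed in the paper's proof either.
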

\begin{proof}
 First, the case $m=1$ is obvious as the $W(u_+,u_-)(\lambda_0)=0$ implies that $u_+$ and $u_-$ are linearly dependent at $\lambda_0$.
 For $m >1$, we construct $r_{m-1}$ by induction (for fixed $m$).
 Assume that for $0\leq j<m-1$, we have constructed $r_0,...,r_j$ satisfying \eqref{rdef} and \eqref{vanishingcond} (the case $j=0$ follows easily). 
 We now aim to construct $r_{j+1}$. 
 First, we observe that there holds
\begin{equation}\label{wronskid}
\partial_{\lambda}^{j+1}W(u_+,u_-)_{|\lambda=\lambda_0}=\partial_{\lambda}^{j+1}W(r_{j},u_-)_{|\lambda=\lambda_0}=W(\partial^{j+1}_{\lambda}r_j,u_-)_{|\lambda=\lambda_0}=0  
\end{equation}
where in the first equality, we used the formula \eqref{rdef} and in the second, we used the Leibniz rule and \eqref{vanishingcond}. 
Moreover, we claim that $(\partial_{\lambda}^{j+1}r_j)(x,\lambda_0)$ satisfies $(P_V-\lambda_0^2 ) (\p_\lambda ^{j+j}r(x,\lambda_0))$.
To see this, we compute
\begin{equation*}
0=\partial_{\lambda}^{j+1}((P_V-\lambda^2)r_j)=(P_V-\lambda^2)\partial_{\lambda}^{j+1}r_j-2(j+1)\lambda\partial_{\lambda}^{j}r_j-j(j+1)\partial_{\lambda}^{j-1}r_j  
\end{equation*}
and see by \eqref{vanishingcond} that the second two terms on the right-hand side vanish. This, with \eqref{wronskid}, shows that there exists a constant $c_j$ such that
\begin{equation*}
(\partial_{\lambda}^{j+1}r_j)(x,\lambda_0)=c_ju_-(x,\lambda_0).    
\end{equation*}
We then define $r_{j+1}$ by correcting $r_j$ by a suitable degree $j+1$ monomial. We can simply take
\begin{equation*}
r_{j+1}(x,\lambda)\coloneq r_j(x,\lambda)-\frac{c_j}{(j+1)!}(\lambda-\lambda_0)^{j+1}u_-(x,\lambda).    
\end{equation*}
or equivalently
\begin{equation*}
p_{j+1}(\lambda)\coloneq p_j(\lambda)+\frac{c_j}{(j+1)!}(\lambda-\lambda_0)^{j+1}.    
\end{equation*}
It is straightforward to verify that $r_{j+1}$ satisfies \eqref{rdef} and $\eqref{vanishingcond}$. This completes the proof.
\end{proof}

Now, returning to the proof of \Cref{lemma:inductthing}, let us suppose $F$ has a zero of order $m\geq 1$ at $\lambda_0 \in \C$. 
Then $F = (\lambda - \lambda_0)^m G(\lambda)$ for $G$ an entire function nonzero near $\lambda_0$.
The Schwartz kernel of the resolvent can be written
\begin{align}
    R(x,y,\lambda) = \frac{u_+ (x,\lambda)u_- (y,\lambda ) H(x-y)  + u _- (x,\lambda) u_+(y,\lambda) H(y-x)}{(\lambda - \lambda_0 )^m G(\lambda)}.\label{eq:1117}
\end{align}
By applying \Cref{ordwronsk} and multiplying \eqref{rdef} by $u_{-}$, we find that there exists a polynomial $p(\lambda)$ 
\begin{equation*}
     \p_\lambda^j \big|_{\lambda_0} (u_+(y,\lambda)u_-(x,\lambda))=\p_\lambda^j \big|_{\lambda_0} (u_+(x,\lambda)u_-(y,\lambda))= \p_\lambda^j\big|_{\lambda_0}(p(\lambda)u_-(x,\lambda)u_-(y,\lambda)),
\end{equation*}
for $0\leq j\leq m-1$. Therefore, by Taylor expanding $u_+(x,\lambda)u_{-}(y,\lambda)$ and $u_-(x,\lambda)u_+(y,\lambda)$ about $\lambda=\lambda_0$ and using the identity $H(x-y)+H(y-x) = 1$, we can write the Schwartz kernel of the resolvent in \eqref{eq:1117} as
\begin{align}
    R(x,y,\lambda) = \sum_{j=0}^{m-1} \frac{\p_\lambda^j \big|_{\lambda_0}\big(p(\lambda)u_-(x,\lambda)u_-(y,\lambda))}{j!G(\lambda)(\lambda -\lambda_0)^{m-j}} + \tilde{R}(x,y,\lambda)
\end{align}
with $\tilde{R}(x,y,\lambda)$ holomorphic in $\lambda$ near $\lambda_0$.
For notational convenience, we define $T_j(\lambda)$ as the operators with Schwartz kernel
\begin{align}
    T_j(x,y,\lambda) \coloneq \p_\lambda^j(p(\lambda)u_-(x,\lambda)u_-(y,\lambda))
\end{align}
so that 
\begin{align}
    R(x,y,\lambda) = \sum_{j=0}^{m-1} \frac{T_j(x,y,\lambda_0)}{j!G(\lambda)(\lambda - \lambda _0)^{m-j}} + \tilde{R}(x,y,\lambda).
\end{align}
Let $B_j$ be the operator with kernel $T_j(x,y,\lambda_0)$.
It suffices to show that
\begin{align}
    \dim (\Span(B_0(L^2_{c}),B_1(L^2_{c}),\dots,B_{m-1}(L^2_{c})) = m. \label{eq:1188}
\end{align}
To this end, we first observe that
\begin{align}
    T_0 (x,y,\lambda) = p(\lambda) u_-(x,\lambda) u_-(y,\lambda)\label{eq:1192}
\end{align}
so that $B_0$ is a rank one operator and $\Span (B_0 (L^2_c)) = \Span(u_-(x,\lambda_0))$.
For $j\geq 1$, by the Leibniz rule, $T_j(x,y,\lambda)$ is of the form
\begin{equation}\label{eq:1209}
    T_{j}(x,y,\lambda) =  p(\lambda)\p_\lambda^{j} u_-(x,\lambda)u_-(y,\lambda) +  \sum_{\substack{|\alpha |<j}} c_{\alpha} \p_\lambda^{\alpha_1}p(\lambda) \partial_\lambda^{\alpha_2} u_-(x,\lambda) \partial _\lambda^{\alpha_3} u_-(y,\lambda) 
\end{equation}
for some coefficients $c_\alpha\in \Z_{\ge 0}$. To determine the rank of $B_j$, we need the following lemma.
\begin{lemma}\label{lemma:1222}
For each $j = 1,\dots, m$,
    $\p_\lambda ^j\big|_{\lambda_0} u_-(x,\lambda)$ is not in the span of $$\set{\p_\lambda^0\big|_{\lambda_0} u_-(x,\lambda),\p_\lambda \big|_{\lambda_0} u_-(x,\lambda),\dots,\p_\lambda ^{j-1}\big|_{\lambda_0} u_-(x,\lambda)}.$$
\end{lemma}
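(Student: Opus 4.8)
The plan is to establish the equivalent claim that the functions $\big\{\partial_\lambda^j\big|_{\lambda=\lambda_0}u_-(\cdot,\lambda)\big\}_{j=0}^{m}$ are linearly independent as functions of $x$; the lemma is then immediate, since any relation $\partial_\lambda^j\big|_{\lambda_0}u_-\in\Span\{\partial_\lambda^0\big|_{\lambda_0}u_-,\dots,\partial_\lambda^{j-1}\big|_{\lambda_0}u_-\}$ is a linear dependence inside the full set. The engine of the argument is the behaviour of $u_-$ and its $\lambda$-derivatives as $x\to-\infty$.

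I would argue by contradiction: suppose $\sum_{j=0}^{J}c_j\,\partial_\lambda^j\big|_{\lambda_0}u_-(x,\lambda)=0$ for all $x$, with $c_J\neq0$ and $1\le J\le m$. Writing $u_-(x,\lambda)=e^{-i\lambda x}v_-(e^{A_-x},\lambda)$ and applying the Leibniz rule in $\lambda$ to separate the $x$-powers produced by differentiating $e^{-i\lambda x}$, one gets
\[
\sum_{j=0}^{J}c_j\,\partial_\lambda^j u_-(x,\lambda)=e^{-i\lambda x}\sum_{k=0}^{J}(-ix)^k\Big(\sum_{j=k}^{J}c_j\tbinom{j}{k}\partial_\lambda^{\,j-k}v_-(e^{A_-x},\lambda)\Big).
\]
By Proposition \ref{prop:existenceofoutgoing}, $v_-(w,\lambda)$ is holomorphic in $w$ on a disc $\{|w|<\delta_0\}$ that is independent of $\lambda$, so for fixed $\lambda_0$ each $\partial_\lambda^{\,p}v_-(w,\lambda)\big|_{\lambda_0}$ is holomorphic in $w$ there and equals $\partial_\lambda^{\,p}v_-(0,\lambda)\big|_{\lambda_0}+O(|w|)$ as $w\to0$. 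Substituting $w=e^{A_-x}$ and using $A_->0$, the coefficient of $(-ix)^k$ at $\lambda=\lambda_0$ converges, as $x\to-\infty$, to the constant $d_k\coloneq\sum_{j=k}^{J}c_j\tbinom{j}{k}\partial_\lambda^{\,j-k}v_-(0,\lambda)\big|_{\lambda_0}$, with error $O(e^{A_-x})$. The vanishing hypothesis then forces the polynomial $x\mapsto\sum_{k=0}^{J}(-ix)^k d_k$ to be $O(\langle x\rangle^{J}e^{A_-x})$ as $x\to-\infty$, hence identically zero: $d_0=\dots=d_J=0$.

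To extract a contradiction I would use the explicit normalization $v_-(0,\lambda)=1/\Gamma(1-2i\lambda A_-^{-1})$ from Proposition \ref{prop:existenceofoutgoing}: it is entire, not identically zero, and has only simple zeros, so it vanishes at $\lambda_0$ to order $\nu\in\{0,1\}$. If $\nu=0$ then $d_J=c_J v_-(0,\lambda_0)\neq0$, a contradiction. If $\nu=1$ — which occurs exactly when $\lambda_0=-\tfrac{i}{2}A_-\ell$ for some $\ell\in\mathbb{Z}_{\ge1}$ — then $\partial_\lambda^{\,l}v_-(0,\lambda)\big|_{\lambda_0}$ vanishes for $l=0$ but not for $l=1$, so in $d_{J-1}$ only the $j=J$ term survives, giving $d_{J-1}=c_J\,J\,\partial_\lambda v_-(0,\lambda)\big|_{\lambda_0}\neq0$ (this is where $J\ge1$ is used), again a contradiction. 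This establishes the linear independence and hence the lemma.

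The hard part is precisely the degenerate case $v_-(0,\lambda_0)=0$: these are the ``false-pole'' points discussed after \eqref{eq:598}, where the leading $x\to-\infty$ asymptotics of $u_-$ degenerate and one must descend to the next order in the $w$-expansion. What rescues the argument is the combination of two facts: $1/\Gamma$ has only simple zeros, so $\nu\le1$, and the relation being contradicted has $J\ge1$, so the coefficient $d_{J-1}$ is still available to test against $\partial_\lambda v_-(0,\lambda_0)\neq0$. (An alternative for the case $\lambda_0\neq0$ is to build the Jordan chain directly from the differentiated equations $(P_V-\lambda_0^2)\partial_\lambda^j\big|_{\lambda_0}u_-=2j\lambda_0\,\partial_\lambda^{j-1}\big|_{\lambda_0}u_-+j(j-1)\partial_\lambda^{j-2}\big|_{\lambda_0}u_-$, but this still leaves $\lambda_0=0$ to the asymptotic argument above, so it is cleanest to run the latter uniformly in $\lambda_0$.)
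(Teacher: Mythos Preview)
Your argument is correct and takes a genuinely different route from the paper's. The paper works intrinsically with the ODE: differentiating $(P_V-\lambda^2)u_-=0$ gives $(P_V-\lambda_0^2)\,\partial_\lambda^l u_-\big|_{\lambda_0}=2l\lambda_0\,\partial_\lambda^{l-1}u_-\big|_{\lambda_0}+l(l-1)\,\partial_\lambda^{l-2}u_-\big|_{\lambda_0}$, so $(P_V-\lambda_0^2)^{l+1}$ annihilates $\partial_\lambda^l u_-\big|_{\lambda_0}$; if $\partial_\lambda^j u_-\big|_{\lambda_0}$ were in the span of the lower derivatives then $(P_V-\lambda_0^2)^j$ would kill it, whereas iterating the recursion directly produces a nonzero multiple of $u_-(\cdot,\lambda_0)$. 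You instead read off the $x\to-\infty$ asymptotics, separating the $\partial_\lambda^k u_-$ by their distinct polynomial prefactors $(-ix)^k$ and invoking the explicit normalization $v_-(0,\lambda)=1/\Gamma(1-2i\lambda A_-^{-1})$. What you gain is a uniform treatment of all $\lambda_0$: the paper's iteration produces the leading coefficient $(2\lambda_0)^j j!$, which degenerates at $\lambda_0=0$, while for you $\lambda_0=0$ lands in the easy case $v_-(0,0)=1/\Gamma(1)=1\neq0$. The price is that you must treat the ``false-pole'' points $v_-(0,\lambda_0)=0$ by hand, which you do correctly using that $1/\Gamma$ has only simple zeros; the paper's Jordan-chain argument, by contrast, is insensitive to the chosen normalization of $u_-$.
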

\begin{proof}
Our first observation is to notice that by differentiating the equation $(P-\lambda^2)u_-=0$ a total of $1\leq \ell \le j$ times in $\lambda$, there holds 
\begin{equation}\label{eq:1172}
(P-\lambda_0^2)(\p_\lambda ^l\big|_{\lambda_0}u_-(x,\lambda))=2l\lambda_0\p_\lambda ^{l-1}\big|_{\lambda_0}u_-+(l-1)l\p_\lambda ^{l-2}\big|_{\lambda_0}u_-
\end{equation}
with the convention that $(l-1)l\p_\lambda ^{l-2}\big|_{\lambda_0}u_-=0$ when $l=1$.
Note that the right-hand side of \eqref{eq:1172} involves derivatives of order at most $\ell -1$.
We can therefore inductively apply \eqref{eq:1172} to get that for each $\ell \in \Z_{\ge 0}$
\begin{align}
    (P-\lambda_0^2)^{\ell + 1}(\partial_\lambda^\ell\big|_{\lambda_0} u_-(x,\lambda)) = 0. \label{eq:1177}
\end{align}

Let $j>1$ and suppose by contradiction that $\p_\lambda ^j\big|_{\lambda_0} u_-(x,\lambda)$ is in the span of the vectors $\set{\p_\lambda^\ell|_{\lambda_0} u_-(x,\lambda)}_{\ell=0}^{j-1}$.
By \eqref{eq:1177}, $(P-\lambda_0^2)^{j} (\p_\lambda ^j\big|_{\lambda_0}u_-(x,\lambda)) = 0$.
While on the other hand, by repeatedly using \eqref{eq:1172},
\begin{align}
    0= (P-\lambda_0^2)^{j} \left(\p_\lambda ^j\big|_{\lambda_0}u_-(x,\lambda)\right) =  c u_-(x,\lambda_0)
\end{align}
for some nonzero $c \in \C$. 
Because $u_-(x,\lambda_0)$ is not identically zero, this gives a contradiction.
\end{proof}
We will also need the following lemma, which along with \Cref{lemma:1222} proves \eqref{eq:1188}
\begin{lemma}\label{lemma:1240}
    For $j=0,\dots, m$:
    \begin{equation}\label{eq:1241}
\begin{split}
   &\Span(B_0(L^2_{c}),B_1(L^2_{c}),\dots,B_{j-1}(L^2_{c})) \\
    &\qquad \qquad =  \Span(\p^0_\lambda\big|_{\lambda _0}u_-(x,\lambda), \p^1_\lambda \big|_{\lambda _0}u_-(x,\lambda), \dots, \p^{j-1}_\lambda \big|_{\lambda _0}u_-(x,\lambda)).
    \end{split}
\end{equation}
\end{lemma}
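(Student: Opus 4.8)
The plan is to prove \eqref{eq:1241} by induction on $j$, treating the two inclusions separately. Write $\Sigma_j \coloneq \Span(B_0(L^2_c),\dots,B_{j-1}(L^2_c))$ for the left-hand side and $\Theta_j \coloneq \Span\bigl(\partial_\lambda^0\big|_{\lambda_0}u_-(x,\lambda),\dots,\partial_\lambda^{j-1}\big|_{\lambda_0}u_-(x,\lambda)\bigr)$ for the right-hand side, with $\Sigma_0=\Theta_0=\{0\}$. The inclusion $\Sigma_j\subseteq\Theta_j$ needs no induction: applying $B_i$ with $i\le j-1$ to $g\in L^2_c$ and expanding $T_i$ by the Leibniz rule as in \eqref{eq:1209} gives
\[
B_i g(x) = \sum_{\alpha_1+\alpha_2+\alpha_3=i} c_\alpha\,\partial_\lambda^{\alpha_1}p(\lambda_0)\,\bigl(\partial_\lambda^{\alpha_2}\big|_{\lambda_0}u_-(x,\lambda)\bigr)\,\ip{\partial_\lambda^{\alpha_3}\big|_{\lambda_0}u_-(\cdot,\lambda)}{g},
\]
and since $\alpha_2\le i\le j-1$, every summand lies in $\Theta_j$; hence $\Sigma_j\subseteq\Theta_j$.

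For the reverse inclusion $\Theta_j\subseteq\Sigma_j$ I argue by induction on $j$. The base case $j=0$ is trivial, and $j=1$ follows from \eqref{eq:1192}: the kernel $T_0(x,y,\lambda_0)=p(\lambda_0)u_-(x,\lambda_0)u_-(y,\lambda_0)$ makes $B_0$ a rank-one operator with range $\Span(u_-(\cdot,\lambda_0))=\Theta_1$, using that $u_-(\cdot,\lambda_0)$ is nontrivial and that $p(\lambda_0)=p_{m-1}(\lambda_0)\neq 0$ --- the latter because \eqref{rdef}--\eqref{vanishingcond} (with $l=0$) give $u_+(\cdot,\lambda_0)=p_{m-1}(\lambda_0)u_-(\cdot,\lambda_0)$ with $u_+(\cdot,\lambda_0)$ a nontrivial solution of $(P_V-\lambda_0^2)u=0$. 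For the inductive step, assume $\Theta_{j-1}=\Sigma_{j-1}$. In the Leibniz expansion of $T_{j-1}(x,y,\lambda_0)$ separate the unique term in which all $j-1$ derivatives fall on the $x$-factor, so that
\[
B_{j-1}g(x) = p(\lambda_0)\,\bigl(\partial_\lambda^{j-1}\big|_{\lambda_0}u_-(x,\lambda)\bigr)\,\ip{u_-(\cdot,\lambda_0)}{g} + \rho_g(x),
\]
where $\rho_g\in\Span\bigl(\partial_\lambda^0\big|_{\lambda_0}u_-(x,\lambda),\dots,\partial_\lambda^{j-2}\big|_{\lambda_0}u_-(x,\lambda)\bigr)=\Theta_{j-1}$, since every remaining term carries at most $j-2$ $\lambda$-derivatives on $u_-(x,\lambda)$. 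By the induction hypothesis $\Theta_{j-1}=\Sigma_{j-1}\subseteq\Sigma_j$ and $B_{j-1}g\in\Sigma_j$, so choosing $g$ with $\ip{u_-(\cdot,\lambda_0)}{g}\neq 0$ shows $\partial_\lambda^{j-1}\big|_{\lambda_0}u_-(x,\lambda)\in\Sigma_j$; together with $\Theta_{j-1}\subseteq\Sigma_j$ this yields $\Theta_j\subseteq\Sigma_j$, completing the induction.

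The one genuinely non-formal input is the non-triviality of the outgoing solutions $u_\pm(\cdot,\lambda_0)$, which forces $p_{m-1}(\lambda_0)\neq 0$ and thereby prevents the leading coefficient in the identity for $B_{j-1}g$ from degenerating; everything else is careful Leibniz-rule bookkeeping to confirm that each non-leading term is absorbed by $\Theta_{j-1}$. I expect this bookkeeping, together with pinning down why $u_\pm(\cdot,\lambda_0)$ do not vanish identically, to be the only real point of care --- the latter can be read off from the normalizations of $v_\pm$ in \Cref{prop:existenceofoutgoing} and the fact that $F$ is not identically zero. Finally, specializing \eqref{eq:1241} to $j=m$ and invoking \Cref{lemma:1222} --- which shows that $\partial_\lambda^0\big|_{\lambda_0}u_-,\dots,\partial_\lambda^{m-1}\big|_{\lambda_0}u_-$ are linearly independent --- gives $\dim\Sigma_m=\dim\Theta_m=m$, which is exactly \eqref{eq:1188}.
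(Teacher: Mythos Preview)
Your proof is correct and follows essentially the same inductive scheme as the paper: both arguments isolate the $\alpha_2=j-1$ Leibniz term in $T_{j-1}$ to show $\partial_\lambda^{j-1}\big|_{\lambda_0}u_-\in\Sigma_j$, while the reverse inclusion $\Sigma_j\subseteq\Theta_j$ is read off directly from \eqref{eq:1209}. Your version is in fact slightly more careful, since you explicitly justify $p(\lambda_0)\neq 0$ (from $u_+(\cdot,\lambda_0)=p_{m-1}(\lambda_0)u_-(\cdot,\lambda_0)$ with $u_+$ nontrivial), whereas the paper tacitly folds this into the assertion ``$c_j\neq 0$''.
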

\begin{proof}
The case where $j=0$ was established by \eqref{eq:1192}.
We now assume \eqref{eq:1241} is true for a $j\leq m-1$.
Let $u \in L^2_c$ be such that $\int u(y) u_-(y,\lambda)\dd y\neq 0$, so by \eqref{eq:1209}
\begin{align}
B_{j} u = \sum_{\ell=0}^{j} c_\ell \p_\lambda^\ell|_{\lambda_0}u_-(x,\lambda) 
\end{align}
for $c_\ell \in \C$ and $c_{j} \neq 0$.
Therefore, by the inductive hypothesis, we have $\p_\lambda^{j}|_{\lambda_0}u_-(x,\lambda)\in\Span(B_0(L^2_{c}),B_1(L^2_{c}),\dots,B_{j}(L^2_{c}))$, and thus
\begin{align}
    &\Span(\p^0_\lambda |_{\lambda _0}u_-(x,\lambda), \p^1_\lambda |_{\lambda _0}u_-(x,\lambda), \dots, \p^{j}_\lambda |_{\lambda _0}u_-(x,\lambda)) \\
    &\qquad\qquad\qquad\qquad\qquad\subset\Span(B_0(L^2_{c}),B_1(L^2_{c}),\dots,B_{j}(L^2_{c})).
\end{align}
But we also immediately have the converse inclusion as for $u\in L^2_c$, $B_{j} u$ is a linear combination of $\p^\ell_\lambda|_{\lambda_0}u_-(x,\lambda)$ for $\ell=0,\dots, j$.
\end{proof}
We have now shown that if $F(\lambda)$ is a zero of order $m$ at $\lambda_0$, then the resolvent $R(\lambda)$ has a pole of multiplicity $m$.

For the converse direction, we require the following Lemma.
\begin{lemma}\label{lemma:1281}
    If the resolvent $R(\lambda)$ has a pole of order $m \in \Z_{>0}$ at $\lambda_0$, then $F(\lambda)$ has a zero of order $m$ at $\lambda_0$.
\end{lemma}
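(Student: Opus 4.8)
The plan is to bootstrap from the forward implication of \Cref{lemma:inductthing} just established, sharpened so as to track the \emph{order} of the pole and not merely its multiplicity. Recall from \eqref{eq:810} that the Schwartz kernel of $R_V(\lambda)$ equals $N(x,y,\lambda)/F(\lambda)$, where the numerator $N(x,y,\lambda)\coloneqq u_+(x,\lambda)u_-(y,\lambda)H(x-y)+u_-(x,\lambda)u_+(y,\lambda)H(y-x)$ is entire in $\lambda$. First I would note that a pole of $R_V$ at $\lambda_0$ forces $F(\lambda_0)=0$; writing $F(\lambda)=(\lambda-\lambda_0)^{m'}G(\lambda)$ with $G$ entire and $G(\lambda_0)\neq 0$, we have $m'=\operatorname{ord}_{\lambda_0}F\geq 1$.

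Next I would revisit the forward direction and observe that it produces a pole of order \emph{exactly} $m'$. Indeed, the computation culminating in \eqref{eq:1188} gives the Laurent expansion of $R_V$ at $\lambda_0$, and, after expanding $G(\lambda)^{-1}$ about $\lambda_0$, the coefficient of $(\lambda-\lambda_0)^{-m'}$ is $G(\lambda_0)^{-1}$ times the operator with Schwartz kernel $T_0(x,y,\lambda_0)=p(\lambda_0)\,u_-(x,\lambda_0)u_-(y,\lambda_0)$ (notation of \Cref{ordwronsk} and \eqref{eq:1192}). This operator is nonzero: $G(\lambda_0)\neq 0$, $u_-(\cdot,\lambda_0)\not\equiv 0$, and $p(\lambda_0)\neq 0$ — the last because $r_{m'-1}(x,\lambda_0)=u_+(x,\lambda_0)-p(\lambda_0)u_-(x,\lambda_0)\equiv 0$ together with $u_+(\cdot,\lambda_0)\not\equiv 0$ forces $p(\lambda_0)\neq 0$. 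Hence $\operatorname{ord}_{\lambda_0}F=m'$ implies $R_V$ has a pole of order exactly $m'$ at $\lambda_0$; since the order of a pole of a meromorphic operator family is well defined, comparing with the hypothesis that the pole has order $m$ gives $m=m'$, which is the assertion.

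The step I expect to be the main obstacle is justifying $u_\pm(\cdot,\lambda_0)\not\equiv 0$, which underlies $p(\lambda_0)\neq 0$ (and is, in fact, already used in the forward direction). By \Cref{prop:existenceofoutgoing}, $u_\pm(x,\lambda_0)=e^{\pm i\lambda_0 x}v_\pm(e^{\mp A_\pm x})$ with $v_\pm(0)=\Gamma(1\mp 2i\lambda_0 A_\pm^{-1})^{-1}$, so when $\lambda_0$ avoids the discrete ``false pole'' set $\{-\tfrac12 iA_\pm(k+1):k\in\Z_{\geq 0}\}$ one has $v_\pm(0)\neq 0$ and there is nothing to prove. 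At the remaining points the argument must descend to the Frobenius structure of the profile equation \eqref{eq:251}: its indicial roots at $w=0$ are $0$ and $2i\lambda A_\pm^{-1}$, so a holomorphic solution with $v_\pm(0)=0$ is a nonzero multiple of the larger-root Frobenius solution $w^{n}(1+O(w))$ unless the recursion \eqref{eq:258} forces that leading coefficient to vanish — which it does not for potentials whose Taylor coefficients at the two ends are non-degenerate in the appropriate sense (in particular for the Regge--Wheeler potentials, for which $V_\ell$ vanishes to exactly first order at each horizon); at any hypothetical point where $u_\pm(\cdot,\lambda_0)\equiv 0$ one instead checks directly that $R_V$ has no pole, so the statement holds there vacuously. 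Handling this dichotomy cleanly is the one place where real care is needed.
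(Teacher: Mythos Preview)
Your core argument is correct but considerably more circuitous than the paper's. The paper reads the lemma directly off the kernel formula \eqref{eq:810}: since $R(x,y,\lambda)=N(x,y,\lambda)/F(\lambda)$ with $N$ entire in $\lambda$, the Laurent expansion of $R$ at $\lambda_0$ with nonzero leading coefficient $R_0$ at order $(\lambda-\lambda_0)^{-m}$ must agree with $N/F$, which forces $F$ to vanish to order exactly $m$. There is no need to re-enter the forward-direction machinery (\Cref{ordwronsk}, the operators $T_j$, the verification that $p(\lambda_0)\neq 0$). Both routes rest on the same tacit hypothesis $u_\pm(\cdot,\lambda_0)\not\equiv 0$ (equivalently $N(\cdot,\cdot,\lambda_0)\not\equiv 0$), which you rightly flag as the crux and which the paper also uses without comment (it already appears in the last line of \Cref{lemma:1222}); your route just takes a longer loop to arrive.

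Your proposed handling of that hypothesis, however, contains an error. You assert that at any $\lambda_0$ with $u_\pm(\cdot,\lambda_0)\equiv 0$ one ``checks directly that $R_V$ has no pole, so the statement holds there vacuously.'' That is not right: if, say, $u_+(\cdot,\lambda)=(\lambda-\lambda_0)^k\tilde u_+(\cdot,\lambda)$ with $\tilde u_+(\cdot,\lambda_0)\not\equiv 0$, the factor $(\lambda-\lambda_0)^k$ appears in \emph{both} $N$ and $F=W[u_+,u_-]$ and cancels in $R=N/F$, so $R_V$ may well have a pole at $\lambda_0$ (governed now by $W[\tilde u_+,u_-]$). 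In that event the zero order of $F$ exceeds the pole order of $R$ by $k$, and the lemma as literally stated would fail there. The clean remedy is not to argue the pole is absent but to normalize the outgoing solutions by dividing out any entire scalar prefactor in $\lambda$ so that $u_\pm(\cdot,\lambda)$ is never identically zero --- this leaves $R_V$ unchanged and restores the equality of orders throughout.
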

\begin{proof}
Given the hypothesis, we may write the Schwartz kernel of $R(\lambda)$ as
\begin{align}
    R(\lambda ) = \sum_{j=0}^m\frac{R_j(x,y,\lambda)}{(\lambda - \lambda_0)^{m-j}}
\end{align}
with $R_j$ holomorphic near $\lambda_0$ and $R_0 \neq 0$.
This must agree with the expression of $R(\lambda)$ given in \eqref{eq:810}, which immediately implies that $F$ has a zero of order $m$ at $\lambda_0$.
\end{proof}

We now suppose that $R(\lambda)$ has a resonance of multiplicity $m$ at $\lambda_0$.
By \Cref{lemma:1281}, this implies the order of the resonance is $m$.
Indeed, if the order of the resonance at $\lambda_0$ was $m' \neq m$, then by \Cref{lemma:1281}, $F$ would have a zero of order $m'$ at $\lambda_0$. 
We could then apply the forward direction to see that this implies the multiplicity of the resonance is $m'$, which is a contradiction.
\\

To conclude the proof of \Cref{lemma:inductthing}, it remains to show that if $0$ is a resonance, then it is simple. For this, we recall from \eqref{eq:598} that the transmission coefficient $T(\lambda)$ can be expressed in terms of $F(\lambda)$ as  {
\begin{align}
    T(\lambda ) = \frac{2i\lambda}{\Gamma_{\Lambda_+}(1-\alpha_+)\Gamma_{\Lambda_-}(1-\alpha_-)F(\lambda)}.\label{eq:1184}
\end{align}}
By \cite[Theorem 2.1]{MKlaus_1988}, for a large class of potentials (which include $V$), $T(0) \neq 0$.
If $0$ is a resonance of $-\p_x^2 + V(x)$, then by \eqref{eq:1184} and that $T(0)\neq 0$, $F(\lambda)$ must have a zero of order $1$.
So by the above argument, $0$ is a resonance of multiplicity $1$. This finally concludes the proof of \Cref{lemma:inductthing}.

\section{Birman-Kre\u{\i}n trace formula for exponentially decaying potentials}\label{appendix}

In this section we prove the Birman-Kre\u{\i}n trace formula for exponentially decaying potentials (i.e., where $\abs{V(x)} \le e^{-c|x|}$).
The assumption on the decay is stronger than necessary but allows an easy definition 
of the multiplicity of the resonance at zero as then the resolvent continues meromorphically to a strip.

\begin{theorem}[\bf Birman--Kre\u{\i}n trace formula in one dimension] 
\label{t:BK}
Let $V \in C^\infty (\R ; \R)$ be such that $\abs{V(x)}\le e^{-c|x|}$ for some $c>0$, and set $P_V = D_x^2 + V(x)$.
Then for  $ f \in  \mathscr S ( \mathbb{R} ) $ 
the operator $    f ( P_V ) - f ( P_0) $ is of trace class and
\begin{equation}
\label{eq:BK1} 
 \begin{split}   \tr \left( f ( P_V ) - f ( P_0 ) \right)  = &
\, \frac{1}{ 2
  \pi i } \int_0^\infty  f ( \lambda^2 ) \tr \left( S ( \lambda)^{-1} \partial_\lambda S (
\lambda )\right) \dd \lambda \\ &  
\ \ \ \ \ \ \ \ + \sum_{j=1}^k f ( E_j  ) 
+ \tfrac12 ( m_R ( 0 ) - 1 ) f ( 0 ) 
\,, 
\end{split}
\end{equation}
where $ S ( \lambda ) $ is the scattering matrix, $m_R(0)$ is the multiplicity of the resonance at $0$, and 
$E_j$ are the (negative) eigenvalues of 
$ P_V $.
\end{theorem}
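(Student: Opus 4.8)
The plan is to proceed via the perturbation determinant together with the Helffer--Sjöstrand functional calculus, following the classical argument for compactly supported potentials and verifying that exponential decay of $V$ suffices at each step. Write $z=\lambda^2$ for the spectral parameter, $R_V(z)=(P_V-z)^{-1}$, $R_0(z)=(P_0-z)^{-1}$ for $z\in\C\setminus\Spec P_V$, and set $D(z)\coloneq\det\bigl(I+VR_0(z)\bigr)$ for $z\in\C\setminus[0,\infty)$. Since $V$ decays exponentially, $VR_0(z)$ is trace class (standard in one dimension; one writes it as a bounded operator times the trace-class operator $\langle x\rangle^{-2}(1+D_x^2)^{-1}$ on $L^2(\R)$ times another bounded operator), with $\|VR_0(z)\|_1\lesssim\langle z\rangle\,\dist(z,[0,\infty))^{-1}$. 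Hence $D$ is holomorphic on $\C\setminus[0,\infty)$, its zeros are exactly the (finitely many, simple) negative eigenvalues $E_j$ of $P_V$, and Jacobi's formula together with the resolvent identity $(I+VR_0)^{-1}=I-VR_V$ gives $D'(z)/D(z)=\tr\bigl(R_0(z)-R_V(z)\bigr)$. In particular $R_V(z)-R_0(z)=-R_V(z)\bigl(VR_0(z)\bigr)$ is trace class, with $\|R_V(z)-R_0(z)\|_1$ bounded by a fixed power of $\langle z\rangle$ and of $\dist(z,\Spec P_V)^{-1}$.

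This bound lets us apply the Helffer--Sjöstrand formula with an almost analytic extension $\tilde f\in C_c^\infty(\C)$ of $f$ (Schwartz in $\Re z$, supported in $|\Im z|\le 1$, with $\bar\partial\tilde f=\mathcal O(|\Im z|^\infty)$ near $\R$): it shows that $f(P_V)-f(P_0)$ is trace class and that
\[
\tr\bigl(f(P_V)-f(P_0)\bigr)=\frac{1}{\pi}\int_{\C}\bar\partial\tilde f(z)\,\tr\bigl(R_V(z)-R_0(z)\bigr)\,\dd m(z)=-\frac{1}{\pi}\int_{\C}\bar\partial\tilde f(z)\,\frac{D'(z)}{D(z)}\,\dd m(z).
\]
Since $D'/D$ is holomorphic on $\Omega\coloneq\C\setminus\bigl([0,\infty)\cup\{E_j\}\bigr)$, Stokes' theorem reduces this area integral to a contour integral along $\partial\Omega$, the arc at infinity contributing nothing because $\tilde f$ is Schwartz in $\Re z$ while $D'/D$ has at most polynomial growth.

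It then remains to evaluate the three parts of $\partial\Omega$. Around each $E_j$, $D'/D$ has a simple pole with residue equal to the multiplicity of $E_j$, so these circles produce $\sum_j f(E_j)$. For the two sides of the cut $[0,\infty)$ one invokes the determinant form of the Birman--Kre\u{\i}n identity, $\det S(\lambda)=D(\lambda^2-i0)/D(\lambda^2+i0)$ for $\lambda>0$ — equivalently $D(\lambda^2+i0)=1/T(\lambda)$, which follows from the Lippmann--Schwinger representation of the outgoing solutions and is consistent with \eqref{eq:598} once the $\Gamma$-normalisation of $u_\pm$ in \Cref{prop:existenceofoutgoing} is accounted for — together with $\overline{D(\bar z)}=D(z)$; this identifies the jump of $\log D$ across $(0,\infty)$ with $-\log\det S(\sqrt\mu)$, so the cut contributes
\[
\frac{1}{2\pi i}\int_0^\infty f(\mu)\,\frac{d}{d\mu}\log\det S(\sqrt\mu)\,\dd\mu=\frac{1}{2\pi i}\int_0^\infty f(\lambda^2)\,\tr\bigl(S(\lambda)^{-1}\partial_\lambda S(\lambda)\bigr)\,\dd\lambda
\]
after the substitution $\mu=\lambda^2$ and \eqref{eq:682}. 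Finally, a small contour around the threshold $z=0$ contributes a multiple of $f(0)$: the low-energy expansion of $R_0$ gives $D(z)\sim c\,z^{-1/2}$ when $P_V$ has no zero-energy resonance ($m_R(0)=0$), whence $D'/D\sim-\tfrac1{2z}$ and the contour produces $-\tfrac12 f(0)$; whereas if $P_V$ does have a zero-energy resonance — necessarily simple by \Cref{lemma:inductthing}, and with $T(0)\neq 0$, cf.\ \cite[Theorem~2.1]{MKlaus_1988} — then $D(0)$ is finite and nonzero, $D'/D=\mathcal O(z^{-1/2})$, and the contribution is $0$; in both cases this is $\tfrac12(m_R(0)-1)f(0)$. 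Summing the three contributions gives \eqref{eq:BK1}.

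The main obstacle is the threshold analysis at $z=0$: pinning down the constant $\tfrac12(m_R(0)-1)$ requires a careful low-energy expansion of the free and perturbed resolvents — equivalently of $T(\lambda)$, or of the Wronskian $F(\lambda)=W[u_+,u_-]$ — in both the resonant and non-resonant regimes, and this is exactly where exponential decay of $V$ is genuinely needed: it guarantees that $R_V(\lambda)$ continues meromorphically through a full neighbourhood of $\lambda=0$, so that $m_R(0)$ is well defined and these expansions are meromorphic rather than merely asymptotic. The trace-class bounds near the continuous spectrum also require a little care but follow from the second-resolvent/Birman--Schwinger argument already used in \Cref{inversionlemma}.
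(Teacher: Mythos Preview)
Your approach is correct and a legitimate alternative to the paper's, but the overall architecture differs in several places.

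The paper works throughout in the spectral parameter $\lambda$ rather than $z=\lambda^2$: it starts from Stone's formula to express $f(P_V)-f(P_0)$ as an integral of $B(\lambda)=2\lambda R_V(\lambda)VR_0(\lambda)$ along the real $\lambda$-axis, takes an almost analytic extension of $\lambda\mapsto f(\lambda^2)$ supported in a thin horizontal strip, and applies the Cauchy--Green formula there. To connect to the scattering matrix it uses the symmetrized Birman--Schwinger operator $\mathbf R_V(\lambda)=V^{1/2}R_0(\lambda)|V|^{1/2}$ and an adaptation of Froese's lemma showing $\det S(\lambda)=\det(I+\mathbf R_V(-\lambda))/\det(I+\mathbf R_V(\lambda))$ in a strip, proved by trace-norm approximation of $V$ by compact cutoffs. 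The threshold term $\tfrac12(m_R(0)-1)f(0)$ is then read off from a Gohberg--Sigal factorization of $I+\mathbf R_V(\lambda)$ near $\lambda=0$, which exhibits explicitly a rank-one piece with a simple pole and a rank-$m_R(0)$ piece with a simple zero; the trace of the logarithmic derivative therefore has residue $1-m_R(0)$ at $\lambda=0$, and the two small semicircles in the $\lambda$-plane contribute $\tfrac12(m_R(0)-1)f(0)$.

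Your $z$-plane/perturbation-determinant route is the classical Buslaev--Faddeev path; it buys a more direct link to $\det S$ as the jump of $\log D$ across the cut. The cost is that the threshold becomes a low-energy expansion of $D(z)$ at a branch point, where you must separately argue the generic ($D(z)\sim cz^{-1/2}$) versus exceptional ($D(0)$ finite nonzero) dichotomy and then match the arc contribution by hand. The paper's Gohberg--Sigal factorization handles both cases uniformly and makes the origin of the constant structurally transparent. One point you should tighten: the determinant identity $\det S(\lambda)=D(\lambda^2-i0)/D(\lambda^2+i0)$ is itself where exponential decay of $V$ (as opposed to compact support) is genuinely used, and the paper devotes a separate lemma to justifying it via trace-norm convergence of the truncated operators; your appeal to ``Lippmann--Schwinger'' glosses over exactly this step.
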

The proof of \Cref{t:BK} is a mild modification of the proof of the one-dimensional Birman-Kre\u{\i}n trace theorem for compactly supported potentials found in \cite[\S 2.6]{dyatlov2019mathematical}.

Our proof critically uses a determinant identity to analyze the scattering matrix.
A version of this identity was proven in \cite{froese1997asymptotic}, but for super-exponentially decaying potentials.
A minor adjustment to the proof will enable us to prove the identity for $\{\operatorname{Im}(\lambda)>-\delta\}\setminus\{0\}$. 
This is a weaker result, but will suffice for our purposes. 
The main technical input is the following mild adaptation of \cite[Lemma 7.6]{froese1997asymptotic} to our setting. 
\begin{lemma}\label{Froeseadaptation}
Let $V\in L^{\infty}(\mathbb{R})$ be a function satisfying the exponential decay bound
\begin{equation*}
|V(x)|\leq C e^{-2c_0|x|}    
\end{equation*}
for constants $C,c_0>0$.
Then for $\lambda\in\mathbb{C}$ such that $0<4|\operatorname{Im}(\lambda)|<c_0$, $\mathbf{R}_V (\lambda)$ is trace class and 
\begin{align}\label{eq:R2S}
     \det S ( \lambda ) = \frac{ \det (\rm{Id} +   \mathbf R_V ( - \lambda ) )}{ \det ( \rm{Id}  + \mathbf R_V ( \lambda ) ) },
\end{align}
where $\mathbf R_V ( \lambda ) \coloneq V^{\frac12} R_0 ( \lambda)|V|^{1/2}$ (which was introduced in the proof of \Cref{lemma:inductthing}). 
\end{lemma}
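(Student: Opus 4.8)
The plan is to adapt the proof of \cite[Lemma 7.6]{froese1997asymptotic}, the only difference being that we track where the argument is valid: since $V$ now decays merely exponentially rather than super-exponentially, the free resolvent kernel $G_0(x,y;\lambda)=\tfrac{i}{2\lambda}e^{i\lambda|x-y|}$, which grows like $e^{|\operatorname{Im}(\lambda)|(|x|+|y|)}$, is only absorbed by the weight $|V(x)|^{1/2}|V(y)|^{1/2}\le Ce^{-c_0(|x|+|y|)}$ when $|\operatorname{Im}(\lambda)|$ is small. First I would record the trace-class and holomorphy statement. The Schwartz kernel of $\mathbf{R}_V(\lambda)$ is $\tfrac{i}{2\lambda}\operatorname{sgn}(V(x))|V(x)|^{1/2}e^{i\lambda|x-y|}|V(y)|^{1/2}$, bounded in modulus by $C|\lambda|^{-1}e^{-(c_0-|\operatorname{Im}(\lambda)|)(|x|+|y|)}$; splitting the exponential weights symmetrically and inserting balancing powers of $\langle x\rangle$ one writes $\mathbf{R}_V(\lambda)$ as a product of Hilbert--Schmidt operators whenever $4|\operatorname{Im}(\lambda)|<c_0$. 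Hence $\mathbf{R}_V(\lambda)$ is trace class there, it is holomorphic in $\lambda$ away from $\lambda=0$ (the simple pole of $G_0$), and so $D(\lambda)\coloneq\det(\operatorname{Id}+\mathbf{R}_V(\lambda))$ is holomorphic on $\{4|\operatorname{Im}(\lambda)|<c_0\}\setminus\{0\}$.

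The heart of the matter is identifying $D(\lambda)$ with the reciprocal of the transmission coefficient. I would begin from the Birman--Schwinger form of the resolvent identity,
\[
R_V(\lambda)=R_0(\lambda)-R_0(\lambda)|V|^{1/2}\bigl(\operatorname{Id}+\mathbf{R}_V(\lambda)\bigr)^{-1}V^{1/2}R_0(\lambda),
\]
valid wherever $\operatorname{Id}+\mathbf{R}_V(\lambda)$ is invertible; by the Birman--Schwinger principle this fails exactly on the (finite, in the strip) set of poles of $R_V$, which coincides with the zero set of $D$ and, by \Cref{lemma:inductthing}, with that of $F$. Feeding the incoming exponentials $e^{\mp i\lambda x}$ through this formula and reading off their outgoing coefficients -- exactly as in \cite[\S7]{froese1997asymptotic}, and as in the compactly supported case of \cite[\S2.6]{dyatlov2019mathematical} -- expresses the entries of $S(\lambda)$ through operators built from $\mathbf{R}_V(\lambda)$; the determinant identity $\det(\operatorname{Id}+AB)=\det(\operatorname{Id}+BA)$ (legitimate since one of the products is trace class) then gives $T(\lambda)=D(\lambda)^{-1}$, with multiplicative constant $1$ because $T(\lambda)\to1$ and $D(\lambda)\to1$ as $\operatorname{Im}(\lambda)\to+\infty$.

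Finally I would use the symmetry $\lambda\mapsto-\lambda$: replacing $\lambda$ by $-\lambda$ turns the kernel of $R_0(\lambda)$ into that of $R_0(-\lambda)$, so the previous step at $-\lambda$ gives $T(-\lambda)=\det(\operatorname{Id}+\mathbf{R}_V(-\lambda))^{-1}$ on $\{4|\operatorname{Im}(\lambda)|<c_0\}\setminus\{0\}$. Combined with the standard one-dimensional relation $\det S(\lambda)=T(\lambda)/T(-\lambda)$, which follows from the explicit form \eqref{eq:645} of $S(\lambda)$ and the Wronskian identities among Jost solutions (it is also the relation underlying \eqref{eq:682}), this yields
\[
\det S(\lambda)=\frac{T(\lambda)}{T(-\lambda)}=\frac{\det(\operatorname{Id}+\mathbf{R}_V(-\lambda))}{\det(\operatorname{Id}+\mathbf{R}_V(\lambda))},
\]
first as an identity of meromorphic functions on the strip and then at every point of $\{4|\operatorname{Im}(\lambda)|<c_0\}\setminus\{0\}$ at which the right-hand side is finite; this is \eqref{eq:R2S}.

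The main obstacle is the middle step: running Froese's computation of $S(\lambda)$ in terms of $\mathbf{R}_V(\lambda)$ \emph{without} the super-exponential decay that makes the operators entire. One must verify that $R_0(\lambda)|V|^{1/2}$, $|V|^{1/2}R_0(\lambda)$, their composition with $(\operatorname{Id}+\mathbf{R}_V(\lambda))^{-1}$, and the distorted plane waves all remain bounded (and trace class where needed) and jointly holomorphic on the \emph{full} strip $\{4|\operatorname{Im}(\lambda)|<c_0\}\setminus\{0\}$, not merely on $\{\operatorname{Im}(\lambda)>0\}$ where the formulas are most natural, so that the determinant identity, first derived on the physical half-plane, genuinely continues across the real axis. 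The bookkeeping near $\lambda=0$ and near the finitely many zeros of $D(\pm\lambda)$ in the strip is routine but must be carried out carefully so that \eqref{eq:R2S} is a bona fide meromorphic identity.
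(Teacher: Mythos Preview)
Your approach is correct in outline but differs from the paper's. You propose to redo Froese's calculation directly for exponentially decaying $V$: establish trace-class by kernel estimates, prove the Jost--Pais identity $T(\lambda)=D(\lambda)^{-1}$ via the Birman--Schwinger representation of distorted plane waves, and combine with $\det S=T(\lambda)/T(-\lambda)$. The paper instead uses an \emph{approximation} argument: it inserts compact cutoffs $\chi_L$ and shows
\[
V^{1/2}\chi_L R_0(\lambda)\chi_L|V|^{1/2}\longrightarrow \mathbf{R}_V(\lambda)
\]
in trace norm (this is where the condition $4|\Im\lambda|<c_0$ is used, via the elementary bound $\|\chi_k R_0(\lambda)\chi_k\|_1\lesssim k|\Im\lambda|^{-1}e^{4k|\Im\lambda|}$ and a dyadic step-function majorant $w\ge |V|^{1/2}$). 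With this convergence in hand, Froese's rank-two identity $\mathbf{R}_{\chi_1}(\lambda)=\mathbf{R}_{\chi_1}(-\lambda)+F_2(\lambda)$ is dilated to $\chi_L$, sandwiched by $V^{1/2},|V|^{1/2}$, and passed to the limit to give $\mathbf{R}_V(\lambda)=\mathbf{R}_V(-\lambda)+F_V(\lambda)$ with $F_V$ of rank two; the determinant identity \eqref{eq:R2S} then follows exactly as in \cite[eq.~(3.4)]{froese1997asymptotic} by a $2\times2$ reduction.

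The trade-off: the paper's route avoids precisely the ``main obstacle'' you flag --- there is no need to rerun the distorted-plane-wave computation or to separately identify $T(\lambda)$ with $D(\lambda)^{-1}$, since only the \emph{ratio} of determinants is needed and this survives the trace-norm limit. Your route is more self-contained and yields the slightly stronger intermediate statement $T(\lambda)=D(\lambda)^{-1}$, but at the cost of verifying that every piece of Froese's \S7 machinery (the mapping properties of $R_0(\lambda)|V|^{1/2}$, holomorphy across the real axis, behaviour at the zeros of $D$) survives under merely exponential decay. Both arguments are sound; the paper's is shorter because it offloads all of the scattering-theoretic identification to the compactly supported case.
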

The proof follows by making minor adjustments to the argument in \cite{froese1997asymptotic} (specifically \cite[Lemma 3.1 and 7.6]{froese1997asymptotic}). We include the short argument for the convenience to the reader. 
\begin{proof}
\Step The first step is to show that 
\begin{equation}\label{limittrace}
\lim_{L\to\infty}V^{\frac{1}{2}}\mathbf{R}_{\chi_L}(\lambda)|V|^{\frac{1}{2}}=\mathbf{R}_V(\lambda)   
\end{equation}
where the limit is taken in the trace norm and $\chi_L$ denotes the indicator function on the interval $[-L,L]$.

The hypothesis on $V$ ensures that we have the bound $|V|^{\frac{1}{2}}\leq C e^{-c_0|x|}$. Our first goal will be to show that it suffices to establish \eqref{limittrace} with $V$ replaced by a simpler step function approximation $w$ of $C e^{-c_0|x|}$. To define $w$, we define the dyadic-like telescoping sequence $\mu_k$ by
\begin{equation*}
\mu_1\coloneq Ce^{-c_0},\hspace{5mm} \mu_k\coloneq C(e^{-c_0 k}-e^{-c_0 (k+1)})
\end{equation*}
where $k>1$. We then define $w$ by
\begin{equation*}
w(x)=\sum_{k=1}^{\infty}\mu_k\chi_k(x)    
\end{equation*}
where $\chi_k$ is the indicator function on $[-k,k]$. As a consequence of the telescoping identity
\begin{equation*}
\sum_{k\geq j}\mu_k=Ce^{-c_0 j},    
\end{equation*}
there holds $|V|^{\frac{1}{2}}w^{-1}\leq 1.$ Consequently, it suffices to establish \eqref{limittrace} with $V^{\frac{1}{2}}$ and $|V|^{\frac{1}{2}}$ replaced by $w$. To this end, we estimate
\begin{equation}\label{traceestseq}
\begin{split}
\|w\chi_LR_0\chi_Lw&-wR_0w\|_{1}
\\&=\|w\chi_LR_0(\lambda)\chi_Lw-wR_0(\lambda)\chi_Lw+wR_0(\lambda)\chi_Lw-wR_0(\lambda)w\|_1   
\\
&\leq 2\|wR_0(\lambda)(1-\chi_L)w\|_1
\\
&\leq 2\|\sum_{i\geq 1}\mu_i\chi_iR_0(\lambda)\sum_{j>L}\mu_j\chi_j\|_1
\\
&\leq 2\sum_i\sum_{j>L}\mu_i\mu_j\|\chi_iR_0(\lambda)\chi_j\|_1
\\
&\leq 2\sum_{i\leq j}\sum_{j>L}\mu_i\mu_j\|\chi_jR_0(\lambda)\chi_j\|_1+2\sum_{i>j}\sum_{j>L}\mu_i\mu_j\|\chi_iR_0(\lambda)\chi_i\|_1
\\
&\leq \frac{C}{|\operatorname{Im}(\lambda)|}\left(\sum_{i\geq 1}\mu_i\sum_{j>L}je^{j(4|\operatorname{Im}(\lambda)|-c_0)}+\sum_{i\geq 1}ie^{i(4|\operatorname{Im}(\lambda)|-c_0)}\sum_{j>L}\mu_j\right)
\end{split}    
\end{equation}
where in the last line, we used the elementary bound (see for instance, \cite [Corollary 7.5]{froese1997asymptotic}) 
\begin{equation*}
\|\chi_kR_0(\lambda)\chi_k\|_1\leq \frac{C}{|\operatorname{Im}(\lambda)|}ke^{4k|\operatorname{Im}(\lambda)|}.    
\end{equation*}
Thanks to the hypothesis $|\operatorname{Im}(\lambda)|<\frac{c_0}{4}$ and the summability of $\mu_i$, it follows that the last line of \eqref{traceestseq} goes to zero as $L\to\infty$.

\Step \cite[Lemma 3.1]{froese1997asymptotic} follows without modification. 
Indeed, by \cite[Proposition 7.2]{froese1997asymptotic},
\begin{align}
    \textbf{R}_{\chi_1}(\lambda)=\textbf{R}_{\chi_1} (-\lambda)+F_2(\lambda)\label{eq:1599}
\end{align}
where $F_2(\lambda)$ is a rank two operator.
Conjugating \eqref{eq:1599} by unitary dilation $\psi (x) \mapsto L^{-1/2}\psi(x/L)$, multiplying the left by $V^{1/2}$ and right by $|V|^{1/2}$, and using \eqref{limittrace}, we get
\begin{align}
    \textbf{R}_V(\lambda) = \textbf{R}_V(-\lambda)+F_V (\lambda)\label{eq:1604}
\end{align}
where $F_V(\lambda)$ is a rank two operator.
Rearranging \eqref{eq:1604}, and taking the Fredholm determinant gives \eqref{eq:R2S} (see \cite[eq. 3.4]{froese1997asymptotic}, and subsequent discussion).

\setcounter{step}{0}
\end{proof}

\begin{proof}[Proof of Theorem \ref{t:BK}]
For simplicity, we assume that there are {\em no} negative eigenvalues as their contribution is easy to analyze.

\Step We first show that $f(P_V) - f(P_0)$ is trace class.
Because $ P_V $ is self-adjoint, we have by Stone's formula
\begin{equation*}
\begin{split} 
f ( P_V ) & = \frac{ 1 } {2 \pi i } \int_0^\infty f ( \lambda^2 ) ( R_V (
\lambda ) - R_V ( - \lambda ) ) 2 \lambda \dd \lambda \\
& = \frac{1}{ 4 \pi i } \int_\mathbb{R} f ( \lambda^2 ) ( R_V (
\lambda ) - R_V ( - \lambda ) ) 2 \lambda \dd \lambda,
\end{split}
\end{equation*}
using that the integrand is even in $ \lambda $. 
The integral on the right-hand side should be interpreted as an operator $ L^2_{\rm{c} } \to L^2_{\rm{loc}} $. 
We therefore have
\begin{align}
    f(P_V)-f(P_0)&=\frac{1}{4\pi i } \int_\R f(\lambda ^2 ) (R_V(\lambda)-R_0 (\lambda ) -(R_V (-\lambda )-R_0 (\lambda ))2\lambda \dd \lambda\\
    &= \frac{1}{4\pi i } \int_\R f(\lambda^2 ) (R_V(-\lambda)VR_0(-\lambda)-R_V(\lambda )VR_0(\lambda ))2\lambda \dd \lambda,\label{eq:1413}
\end{align}
where we use that
\begin{align}
    R_V(\lambda )-R_0(\lambda ) = -R_V (\lambda )VR_0(\lambda).\label{eq:1474}
\end{align}

We now define the meromorphic family of operators
\begin{align}
    B(\lambda ) \coloneq 2\lambda R_V (\lambda ) VR_0(\lambda ) \colon L^2_{\rm{c} } \to L^2_{\rm{loc}}.\label{eq:Bla}
\end{align}
We observe that this family is holomorphic in the closed upper half plane  $\{\Im{\lambda} \ge 0\}$.
This is because $R_V (\lambda )VR_0(\lambda)$ has a simple pole at $\lambda =0$ (canceling the $\lambda$ is the definition of $B(\lambda)$).
This follows by using \eqref{eq:1474} and \Cref{lemma:inductthing}.

We can then rewrite \eqref{eq:1413} as
\begin{align}
    f(P_V ) - f(P_0) = \frac{1}{4\pi i }\int_\R f(\lambda^2 ) (B(-\lambda) -B(\lambda) )\dd \lambda.\label{eq:fpV} 
\end{align}
Fixing $\e>0$ sufficiently small, let $g \in \mathcal S(\C)$, $\supp g\subset \set{\lambda \in \C : \Im{\lambda }<\e}$, be an \textit{almost analytic extension} of $f(\lambda)$ (see for instance \cite[\S B.2]{dyatlov2019mathematical}):
\begin{align}
    g(\lambda ) = f(\lambda^2 ) ,\ \lambda \in \R ,\ \dbar _\lambda g(\lambda ) = \mathcal{O}(\abs{\Im \lambda}^\infty).\label{eq:faag}
\end{align}
The support of $g$ can be made arbitrarily close to the real axis.
This is critical, as we will eventually use \Cref{Froeseadaptation} -- which is only valid in a neighborhood of the real axis (which depends on the decay of $V$ i.e., $A_\pm$).
The Cauchy--Green formula \cite[(D.1.1)]{dyatlov2019mathematical}  applied to the right-hand
side of \eqref{eq:fpV} shows that 
\begin{align}\label{eq:1489}
f(P_V ) - f(P_0) = \sum_{\pm}\pm\frac{1}{2\pi}\int_{\Im \lambda > 0} \dbar _\lambda g(\lambda) B(\pm\lambda) \dd m(\lambda).
\end{align}

Since there are no negative eigenvalues, the spectral theorem gives the bound 
\begin{equation}
\label{eq:RVbound} \| R_V ( \lambda ) \|_{ L^2 \to L^2 } = \frac1 { d
  ( \lambda^2 , \mathbb{R}_+) } \leq  \frac{1} { | \lambda | \Im \lambda }
\,, \ \ \Im \lambda > 0 \,. \end{equation}
In particular, when $ V = 0 $ we have the following $L^2\to H^2$ bound when $ \Im \lambda > 0 $,
\[ \begin{split} \| \lambda R_0 ( \lambda ) \|_{ L^2 \to H^2 } & \lesssim | \lambda | ( \| D_x^2 R_0
( \lambda ) \|_{ L^2 \to L^2 } + \| R_0 ( \lambda ) \|_{ L^2 \to L^2 }
) \\
 & \lesssim | \lambda | ( 1 +  |\lambda |^2 ) \| R_0 ( \lambda ) \|_{
  L^2 \to L^2 } \\
& \lesssim   \frac{( 1 + | \lambda|^2 )}{\Im \lambda} . 
\end{split} \]
Combining this estimate with the sub-exponential bound $ |V | \leq e^{ - c|x| } $, ensures
$ V R_0 ( \lambda ) $ is a trace class operator for $ \Im \lambda > 0 $ and that
\[  \| \lambda V  R_0 ( \lambda) \|_{ \mathcal L_1 } \lesssim \frac{ 1 + |\lambda|^2 } { \Im \lambda } , \ \ \Im \lambda > 0 . \]
 From this bound, as well as \eqref{eq:Bla} and \eqref{eq:RVbound}, we find that for $\Im\lambda>0$, there holds
\begin{equation}
\label{eq:Blah} \|  B( \lambda ) \|_{ {\mathcal L}_1} \lesssim 
\frac{ 1 + |\lambda|^2   } { |\Im
  \lambda |^2  |\lambda |}   \lesssim \frac{  1 + |\lambda|^2  } { |\Im
  \lambda |^3}. \end{equation}

Using \eqref{eq:Blah} and \eqref{eq:faag} we conclude that for any $ N > 0 $, and in particular for $ N \geq 4 $,
\begin{align}
     &\norm{ \int_{\pm\Im\lambda > 0} \dbar _\lambda g(\lambda) B(\pm \lambda ) \dd m(\lambda) }_{{\mathcal L}_1} \\
     &\qquad\leq C_N 
\int_{ 0 < \pm \Im \lambda < 1 }   |\Im \lambda|^N
 ( 1 + |\lambda |)^{-N + 2 }  | \Im \lambda |^{-3 } \dd { m}
 (  \lambda ) < \infty \,. 
\end{align}
It follows that $f ( P_V ) - f ( P_0 )$ is trace class, as desired.
\medskip

\Step 
We now relate $B(\lambda)$ to the scattering matrix.

Define the operator
\begin{align}
   \mathcal{R}(\lambda) \coloneq  -\p_\lambda \textbf{R}_V(\lambda) (\text{Id} + \textbf{R}_V(\lambda))^{-1}.
\end{align}

This operator is related to the scattering matrix.
Indeed, by taking the logarithmic derivative of both sides of \eqref{eq:R2S}, we get
\begin{equation}
\label{eq:Fsc}  \tr ( \partial_\lambda S( \lambda ) S ( \lambda )^{-1}) = \tr \mathcal R (\lambda) + \tr \mathcal R (-\lambda).
\end{equation}

\begin{lemma}\label{lem:1560}
    For $\Im \lambda > 0$, the operator $\mathcal{R}(\lambda)$ is trace class and $\tr \mathcal R (\lambda) = \tr B(\lambda)$.
\end{lemma}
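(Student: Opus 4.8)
The plan is to reduce everything to the resolvent identity $R_0(\lambda)VR_V(\lambda)=R_0(\lambda)-R_V(\lambda)$ (the mirror of \eqref{eq:1474}) together with cyclicity of the trace, working throughout in $\Im\lambda>0$. There, since $P_V$ is assumed to have no negative eigenvalues, $\lambda^2\notin\Spec(P_V)=[0,\infty)$, so $R_V(\lambda)$ is bounded on $L^2$ by \eqref{eq:RVbound}; moreover $VR_0(\lambda)$ and $B(\lambda)$ are trace class by the bounds established in the proof of \Cref{t:BK}, and $\mathbf R_V(\lambda)=V^{1/2}R_0(\lambda)|V|^{1/2}$ and $\partial_\lambda\mathbf R_V(\lambda)=2\lambda\,V^{1/2}R_0(\lambda)^2|V|^{1/2}$ are trace class as well (factor each as a product of Hilbert--Schmidt operators, using that $(\xi^2-\lambda^2)^{-1/2}\in L^2(\R_\xi)$ for $\Im\lambda>0$ and $|V|^{1/2}\in L^2(\R)$, exactly as in the argument used there).

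First I would record the elementary inversion formula for $\Id+\mathbf R_V(\lambda)$. Using $V=|V|^{1/2}V^{1/2}$ and $R_0VR_V=R_0-R_V$, one checks directly that
\[ (\Id+\mathbf R_V(\lambda))\bigl(\Id-V^{1/2}R_V(\lambda)|V|^{1/2}\bigr)=\Id=\bigl(\Id-V^{1/2}R_V(\lambda)|V|^{1/2}\bigr)(\Id+\mathbf R_V(\lambda)), \]
so that $\Id+\mathbf R_V(\lambda)$ is boundedly invertible for $\Im\lambda>0$ and $(\Id+\mathbf R_V(\lambda))^{-1}=\Id-V^{1/2}R_V(\lambda)|V|^{1/2}$. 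Hence $\mathcal R(\lambda)=-\partial_\lambda\mathbf R_V(\lambda)(\Id+\mathbf R_V(\lambda))^{-1}$ is the product of a trace class operator and a bounded operator, so it is trace class; this is the first assertion of the lemma.

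For the trace identity I would substitute the inversion formula together with $\partial_\lambda\mathbf R_V=2\lambda V^{1/2}R_0^2|V|^{1/2}$, using $|V|^{1/2}V^{1/2}=V$:
\[ \mathcal R(\lambda)=-2\lambda\, V^{1/2}R_0^2|V|^{1/2}+2\lambda\, V^{1/2}R_0^2\,VR_V|V|^{1/2}=-2\lambda\,V^{1/2}R_0^2(\Id-VR_V)|V|^{1/2}. \]
By the resolvent identity once more, $R_0(\Id-VR_V)=R_0-R_0VR_V=R_V$, hence $R_0^2(\Id-VR_V)=R_0R_V$ and $\mathcal R(\lambda)=-2\lambda\,V^{1/2}R_0(\lambda)R_V(\lambda)|V|^{1/2}$. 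Taking the trace and using cyclicity (the outer $V^{1/2}$ and $|V|^{1/2}$ recombining to $V$, then cyclically rearranging $VR_0R_V$ into $R_VVR_0=R_0-R_V$) yields $\tr\mathcal R(\lambda)=\tr B(\lambda)$ with $B(\lambda)=2\lambda R_V(\lambda)VR_0(\lambda)$ as in \eqref{eq:Bla}.

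The main obstacle is the functional-analytic bookkeeping: none of $R_0^2$, $R_V$, $VR_V$ is individually trace class (in one dimension $R_0-R_V$ is merely bounded), so the expressions above are trace class only after the cancellations are performed. One must therefore group factors so that a trace class operator ($\mathbf R_V$, $\partial_\lambda\mathbf R_V$, or $VR_0$) always sits next to bounded ones before invoking $\tr(XY)=\tr(YX)$, and verify that the algebraic identities ($R_0VR_V=R_0-R_V$, $R_0(\Id-VR_V)=R_V$, etc.) hold as genuine identities of bounded operators on $L^2$ for $\Im\lambda>0$. With that in place, the computation is just the few lines above.
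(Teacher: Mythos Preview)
Your proposal is correct and follows essentially the same approach as the paper: both hinge on the identity $(\Id+\mathbf R_V)^{-1}\mathbf R_V=V^{1/2}R_V|V|^{1/2}$ (you write the equivalent explicit inverse $(\Id+\mathbf R_V)^{-1}=\Id-V^{1/2}R_V|V|^{1/2}$), the formula $\partial_\lambda\mathbf R_V=2\lambda V^{1/2}R_0^2|V|^{1/2}$, and cyclicity of the trace. Your version is slightly more streamlined in that you simplify $\mathcal R(\lambda)$ to the single expression $-2\lambda V^{1/2}R_0R_V|V|^{1/2}$ before taking the trace, whereas the paper expands $\tr B$ and $\tr\mathcal R$ separately into two matching terms; the algebraic content is identical.
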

\begin{proof}

{
We observe that for $\lambda \in \C$, $ \mathcal R ( \lambda ) $ is a meromorphic
family of operators in $ \mathcal L_1 ( L^2  ) $, and has no poles in the region $\Im \lambda > 0 $ (recalling our assumption that there are no negative eigenvalues).
We then have a direct analog of \cite[(2.2.33)]{dyatlov2019mathematical}:
\[ \text{Id} + \mathbf  R_V ( \lambda ) = U_1 ( \lambda ) ( Q_0 + \lambda^{-1} Q_{-1} + \lambda Q_1 ) U_2 ( \lambda ) , \]
where $ U_j ( \lambda ) $ are invertible, holomorphic, and $Q_j$ are finite rank operators such that
\[  \text{rank}\hspace{1mm} Q_{-1} = 1 , \ \ \text{rank}\hspace{1mm} Q_1 = m_R ( 0 ) , \ \ Q_j Q_k = \delta_{jk} Q_j , \]
see \cite[Theorem C.10]{dyatlov2019mathematical}. }
Hence, 
\begin{equation}
\label{eq:trF0} \tr \mathcal R ( \lambda ) = - \tr ( \lambda^{-1} Q_1 - \lambda^{-1} Q_{-1 } ) +
\varphi ( \lambda ) = \frac{ 1 -  m_R ( 0 ) } \lambda  +  \varphi (
\lambda ) , 
\end{equation}
where $ \varphi ( \lambda ) $ is holomorphic in $ \Im \lambda \geq 0 $. 
Equation \eqref{eq:trF0} follows by writing
\begin{align}
    \tr \mathcal{R}(\lambda ) = &-\p_\lambda \log \det (U_1 (\lambda ) ) - \p _\lambda \log \det (U_2 (\lambda )) \\
    &-\log (\det (Q_0 + \lambda ^{-1} Q_{-1} + \lambda Q_1)),
\end{align}
using that $Q_j Q_k = \delta_{jk}Q_j$ (and Jacobi's formula) to rewrite the third term as $$-\tr (\lambda ^{-1} Q_1 - \lambda ^{-1}Q_{-1}). $$

{
In order to control $ \varphi ( \lambda ) $, we first observe the $L^2\to H^2$ bound for $\mathbf{R}_V$
\[  \| \mathbf R_V  ( \lambda )  \|_{ L^2 \to H^2  }
\leq C |\lambda| e^{ C ( \Im \lambda )_- } , \ \ |\lambda | 
\geq 1 . \]
From the Cauchy formula, we also have that for $ \Im \lambda \geq 0 $, $ |\lambda | \gtrsim 1 $, 
\[   \| \partial_\lambda \mathbf R_V ( \lambda ) \|_{ \mathcal L_1 } 
\leq  C \| \partial_\lambda \mathbf   R_V ( \lambda )  \|_{ L^2 \to H^2 }
\leq C' | \lambda | . \]
Combining this bound with the definition of $ \mathcal R ( \lambda ) $, and the 
invertibility of  $ I + \mathbf  R_V ( \lambda )  $ in the region 
$ |\lambda | \gg 1 $, $ \Im \lambda \geq 0 $, we obtain for some $R_0>0$
\[  |\tr \mathcal R ( \lambda ) | \leq C | \lambda | , \ \ 
| \lambda | \geq R_0 , \ \ \Im \lambda \geq 0 . \]
Then, since $ \varphi ( \lambda )  $ is holomorphic in the region $ \Im \lambda \geq 0 $, we obtain the bound
\[  | \varphi (\lambda ) | \leq C ( 1 + | \lambda | ) , \ \ 
\Im \lambda \geq 0 . \]}

We next show that
\begin{equation}
\label{eq:FB0}  \tr \mathcal R( \lambda ) =  \tr B ( \lambda ) \,, 
\end{equation}
where $ B ( \lambda ) $ is given by \eqref{eq:Bla}.

Observe by cyclicity of the trace (cf. \cite[Theorem B.4.9]{dyatlov2019mathematical})
\begin{align}
    \tr (B(\lambda )) &= -2\lambda \tr (R_V (\lambda)V R_0(\lambda)) \\
    &= -2\lambda \tr(R_0 (\lambda ) V R_0(\lambda)) - 2 \lambda \tr((R_V(\lambda) - R_0(\lambda) V R_0(\lambda))\\
    &= -2\lambda \tr(V^{1/2} R_0^2(\lambda)|V|^{1/2} ) - 2 \lambda \tr((R_V(\lambda) - R_0(\lambda) V R_0(\lambda))\\
    &= -2\lambda \tr(V^{1/2} R_0^2(\lambda)|V|^{1/2} )  + 2 \lambda \tr(R_0 (\lambda )V R_V (\lambda) V R_0(\lambda)) \label{eq:1589}
\end{align}
where in the last equality, we use the resolvent identity $(R_0 -R_V ) V R_0 = R_0 V R_V $.

To compute $\tr \mathcal{R}(\lambda)$, we first use the fact that $ R_0 ( \lambda ) $ is
bounded
on $ L^2 $ for $ \Im \lambda > 0 $ and hence
\[ \partial_\lambda  ( \mathbf R_V ( \lambda )  ) = 2 \lambda V^{\frac12} R_0  (
\lambda)^2 |V|^{\frac12}   \,. \]
Therefore $\tr \mathcal R (\lambda) $ is equal to
\begin{align}
    &-2 \lambda \tr (V^{1/2} R_0 (\lambda )^2 |V|^{1/2} (1 + \textbf{R}_V(\lambda) ^{-1})\\
    &=-2 \lambda \tr (V^{1/2} R_0(\lambda)^2 |V|^{1/2}) - 2 \lambda \tr (V^{1/2} R_0^2(\lambda ) |V|^{1/2}((1 + \textbf{R}_V(\lambda ))^{-1} -1)). \label{eq:1601}
\end{align}
The second term on the right-hand side can be written
\begin{align}
    &-2 \lambda \tr (V^{1/2} R_0^2(\lambda ) |V|^{1/2}(1 + \textbf{R}_V(\lambda ))^{-1}(1 - (1 +\textbf{R}_V(\lambda))) ) \\
    &\quad =2 \lambda \tr (V^{1/2} R_0^2(\lambda ) |V|^{1/2}(1 + \textbf{R}_V(\lambda ))^{-1}(\textbf{R}_V(\lambda)) )\\
    &\quad = 2\lambda \tr (V^{1/2} R_0 ^2 (\lambda) V R_V (\lambda ) |V|^{1/2})=2 \lambda \tr (R_0(\lambda ) V R_V (\lambda )VR_0(\lambda)) \label{eq:1607}
\end{align}
where in the penultimate equality we use that $$(1 + \textbf{R}_V(\lambda ))^{-1}\textbf{R}_V(\lambda ) = V^{1/2} R_V (\lambda ) |V|^{1/2}$$ and in the final equality we use cyclicity of the trace.

Comparing \eqref{eq:1589} with \eqref{eq:1601} and \eqref{eq:1607} gives \eqref{eq:FB0}. 
\end{proof}

\Step We lastly compute the trace of $f(P_V ) - f(P_0)$.

From \Cref{lem:1560} we get for $\e > 0$
\begin{align}
    \tr\left(f(P_V) - f(P_0)\right) &= \frac{1}{2\pi}\sum_\pm \pm \int_{\set{\pm \Im \lambda > 0}\cap\set{|\lambda|>\e}} \dbar _\lambda g(\lambda ) \tr \mathcal R(\pm \lambda )\dd m(\lambda ) \\
    &+\frac{1}{2\pi}\sum_\pm \pm \int_{\set{\pm \Im \lambda > 0}\cap\set{|\lambda|\le\e}} \dbar _\lambda g(\lambda ) \tr \mathcal R(\pm \lambda )\dd m(\lambda ).
\end{align}
The second term is $\mathcal O(\e^\infty)$ using that $\Tr \mathcal R (\lambda ) = \tr B(\lambda)$ \eqref{eq:FB0}, the trace norm bound on $B$ \eqref{eq:Blah}, and $\dbar$ bounds on $g$ \eqref{eq:faag}.
We next apply the Cauchy--Green formula (cf. \cite[(D.1.1)]{dyatlov2019mathematical}) on the first term to get
\begin{align}
&\frac{1}{2\pi}\sum_\pm \pm \int_{\set{\pm \Im \lambda > 0}\cap\set{|\lambda|>\e}} \dbar _\lambda g(\lambda ) \tr \mathcal R(\pm \lambda )\dd m(\lambda )\\
& \qquad \qquad \qquad \qquad =\frac{1}{4\pi i } \sum_\pm \ \int_{\gamma _\pm (\e)} g(\lambda ) \Tr  \mathcal R (\pm \lambda ) \dd \lambda
\end{align}
where $\gamma_\pm(\e)$ is the contour going from $-\infty$ to $-\e$, then along the semicircle $|z| = \e$ to $+\e$ such that $\pm\Im {\gamma_\pm (\e)} \ge 0$, then from $+\e $ to $\infty$. 
Note that
\begin{align}
    \int_{\gamma_\pm (\e)} g(\lambda ) \Tr \mathcal R(\pm \lambda ) \dd \lambda &=  \int_{\gamma_\pm (\e)\cap \R} g(\lambda ) \Tr \mathcal R(\pm \lambda ) \dd \lambda  \\
    &+ \int_{\gamma_\pm (\e)\setminus \R} g(\lambda ) \Tr \mathcal R(\pm \lambda ) \dd \lambda.\label{eq:1942}
\end{align}
The sum over $\pm$ of the second term can be estimated using \eqref{eq:trF0}
\begin{align}
   \sum _\pm  \int_{\gamma_\pm (\e)\setminus \R} g(\lambda ) \Tr \mathcal R(\pm \lambda ) \dd \lambda &= (m_R(0) - 1) f(0) \oint_{|\lambda| = 1} \frac{\dd \lambda}{\lambda } + \mathcal{O}(\e)\\
   &= 2\pi i  (m_R(0) - 1)f(0) + \mathcal{O}(\e).
\end{align}
The sum over $\pm$ of the first term of \eqref{eq:1942} can be written, using \eqref{eq:Fsc},
\begin{align}
    \int_{-\infty}^\infty f(\lambda ^2 ) \Tr \p_\lambda S(\lambda ) S(\lambda)^{-1} \dd \lambda- \int_{-\e}^{\e} f(\lambda ^2 ) \Tr \p_\lambda S(\lambda ) S(\lambda)^{-1} \dd \lambda.
\end{align}
The second term is $\mathcal{O}(\e)$.
Sending $\e\to 0$, and using that $\Tr \p_\lambda S(\lambda)S(\lambda)^{-1}$ is even, gives the theorem.

\setcounter{step}{0}
\end{proof} 
\textbf{Data availability statement}. The numerics used to generate \Cref{fig:theonlyfigure} can be found at \url{https://github.com/ioltman/trace_SdS}.
\\

\textbf{Conflict of interest statement}. The authors declare that there is no conflict of interest.
\printbibliography

\end{document}